\definecolor{linkcolor}{rgb}{0,0,0.6}
\title{Cohomology of the Bruhat-Tits strata in the supersingular locus of the $\mathrm{GU}(1,n-1)$ Shimura variety at a ramified prime}
\author{J.Muller}
\date{}
\begin{document}

\newtheorem{theo}{Theorem}
\newtheorem{prop}[theo]{Proposition}
\newtheorem{lem}[theo]{Lemma}
\newtheorem{corol}[theo]{Corollary}
\newtheorem{conj}[theo]{Conjecture}
\newtheorem*{theo*}{Theorem}

\theoremstyle{remark}
\newtheorem{rk}[theo]{Remark}
\newtheorem{rks}[theo]{Remarks}
\newtheorem{ex}[theo]{Example}

\theoremstyle{definition}
\newtheorem{defi}[theo]{Definition}
\newtheorem*{notation}{Notation}
\newtheorem*{notations}{Notations}


\maketitle

\begin{center}

\parbox{15cm}{\small
\textbf{Abstract} : \it The supersingular locus of the $\mathrm{GU}(1,n-1)$ Shimura variety at a ramified prime $p$ is stratified by Coxeter varieties attached to finite symplectic groups. In this paper, we compute the $\ell$-adic cohomology of the Zariski closure of any such stratum. These are referred to as closed Bruhat-Tits strata. We prove that the cohomology groups of odd degree vanish, and those of even degree are explicitely determined as representations of the symplectic group with Frobenius action. Our approach is based on the spectral sequence induced by the stratification by Coxeter varieties, whose cohomology have been computed by Lusztig. We describe explicitely the terms at infinity in the sequence. We point out that closed Bruhat-Tits strata have isolated singularities when the dimension is greater than 1. Our analysis requires discussing the smoothness of the blow-up at the singular points, as well as comparing the ordinary $\ell$-adic cohomology with intersection cohomology. A by-product of our computations is that these two cohomologies actually coincide.}

\vspace{0.5cm}
\end{center}

\tableofcontents

\vspace{1.5cm}

\textbf{\textsc{Introduction:}} Shimura varieties are objects of central interest in number theory and arithmetic geometry. When the Shimura variety admits an integral model over the completion at some prime of the underlying number field, one may be interested in the geometry of its special fiber. In particular, the supersingular locus of the special fiber has been extensively studied in recent years. In many situations which are precisely listed in \cite{GHNcoxeter} and \cite{GHNhodge}, the supersingular locus admits a so-called Bruhat-Tits stratification, whose strata are isomorphic to classical Deligne-Lusztig varieties for certain finite groups of Lie type. Cohomology plays an important role both in the world of Shimura varieties and in Deligne-Lusztig theory. Thus, exploiting the geometry of the Bruhat-Tits stratification in order to connect both cohomology theories sounds like a promising idea. In \cite{mullerBT} and \cite{mullerUnramified}, we investigated the case of the $\mathrm{GU}(1,n-1)$ PEL Shimura variety over an inert prime $p > 2$, whose supersingular locus was described in \cite{vw1} and \cite{vw2}. More precisely, we explicitely determined the cohomology of the closed Bruhat-Tits strata as representations of finite unitary groups. We used this result to prove the non-admissibility of the cohomology of the associated Rapoport-Zink space, and to determine the cohomology of the supersingular locus for low $n$ in terms of automorphic representations. In this paper, we focus on the case of a ramified prime $p>2$ as studied in \cite{RTW}. Our goal is to replicate the same approach as in the inert case, and find out how to deal with the new technical difficulties caused by the non-smoothness of the integral model. The exposition is divided into two papers, and the present paper is the first of the series. It is devoted to the computation of the cohomology of a given closed Bruhat-Tits stratum using Deligne-Lusztig theory. Let us explain the results in more details.\\
Let $q$ be a power of an odd prime number $p$. Let $V$ be a symplectic space over $\mathbb F_q$ of dimension $2\theta$. For any field extension $k/\mathbb F_q$, let $\tau = \mathrm{id}\otimes \sigma$ denote the semi-linear automorphism of $V_k := V\otimes k$, where $\sigma:x\mapsto x^q$. Let $L(V)$ denote the Lagrangian Grassmanian variety of $V$. We consider the closed subvariety $S_{\theta} \subset L(V)$ whose $k$-points are given by 
$$S_{\theta}(k) = \{U \subset V_k \,|\, U = U^{\perp} \text{ and } \dim(U\cap \tau(U)) \geq \theta-1\}.$$
It turns out that the closed Bruhat-Tits strata mentioned in the previous paragraph are actually isomorphic to $S_{\theta}$ for some $\theta \geq 0$. The variety $S_{\theta}$ is projective, irreducible, normal and of dimension $\theta$. It has isolated singularities when $\theta \geq 2$. When $\theta = 1$, we have $S_1 \simeq \mathbb P^1$. Moreover it is equipped with a natural action of the finite symplectic group $\mathrm{Sp}(V)$. Up to fixing a basis of $V$, we may identify $\mathrm{Sp}(V)$ with the usual group $\mathrm{Sp}(2\theta,\mathbb F_q)$ of symplectic matrices. Unipotent representations of $\mathrm{Sp}(2\theta,\mathbb F_q)$ are naturally classified by Lusztig's notion of symbols, whose definition is recalled in Section \ref{Section2}. Given a symbol $S$, the associated unipotent representation is denoted $\rho_S$. The main theorem is the following.

\begin{theo*}
\begin{enumerate}[label=\upshape (\arabic*), topsep = 0pt]
		\item All the cohomology groups of $S_{\theta}$ of odd degree vanish.
		\item For $0 \leq i \leq \theta$, we have an $\mathrm{Sp}(2\theta,\mathbb F_q)$-equivariant isomorphism
		$$\mathrm H^{2i}(S_{\theta},\overline{\mathbb Q_{\ell}}) \simeq \bigoplus_{s=0}^{\min(i,\theta-i)} \rho_{\scaleto{
		\begin{pmatrix}
		s & \theta+1-s \\
		0 & {} 
		\end{pmatrix}}{25pt}} \oplus \bigoplus_{s = 0}^{\min(i-1,\theta-1-i)} \rho_{\scaleto{
		\begin{pmatrix}
		0 & s + 1 & \theta-s \\
		{} & {} & {} 
		\end{pmatrix}}{25pt}}.$$
		The Frobenius acts like multiplication by $q^i$ on the first summand, and multiplication by $-q^i$ on the second summand.
	\end{enumerate}
\end{theo*}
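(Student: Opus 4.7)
The plan is to induct on $\theta$. The base case $\theta = 1$ is immediate from $S_1 \simeq \mathbb{P}^1$: the two non-trivial cohomology groups are one-dimensional, and the two symbols appearing in the statement with $\theta = 1$ each contribute one summand with the right Frobenius eigenvalue.

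For the inductive step, I would use the stratification of $S_\theta$ by the locally closed subvarieties $X_j = \{U \in S_\theta : \dim(U \cap \tau(U)) = j\}$ for $0 \leq j \leq \theta - 1$. The open stratum $X_{\theta-1}$ is a Deligne-Lusztig variety of Coxeter type for $\mathrm{Sp}(2\theta, \mathbb{F}_q)$, whose $\ell$-adic cohomology with Frobenius action is described explicitly by Lusztig in terms of unipotent representations. The closed complement $S_\theta \setminus X_{\theta-1}$ should decompose as a union (or admit a natural fibration) whose pieces are smaller $S_{\theta'}$ attached to non-degenerate symplectic subspaces of $V$, making the inductive hypothesis directly applicable to the lower strata.

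From this stratification I would set up the spectral sequence in $\ell$-adic cohomology whose $E_1$-page is built from the (compactly supported) cohomology of the open strata and converges to $\mathrm{H}^*(S_\theta, \overline{\mathbb{Q}_\ell})$. The crux is to show that it degenerates and to read off the $E_\infty$-page. Degeneration should follow from a weight/purity argument: Lusztig's computation exhibits the cohomology of each Coxeter stratum as pure of a well-specified Frobenius weight, and differentials between pages of incompatible weights must vanish. Matching the surviving Frobenius eigenvalues $\pm q^i$ with the two families of symbols appearing in the statement is then a bookkeeping exercise using Lusztig's classification of unipotent characters of $\mathrm{Sp}(2\theta, \mathbb{F}_q)$, combined with the inductive description of the deeper strata. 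The vanishing of odd-degree cohomology falls out from the same bookkeeping once one verifies that only even-graded pieces survive.

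The main obstacle will be the presence of isolated singularities on $S_\theta$ when $\theta \geq 2$, which prevents a direct purity argument on $S_\theta$ itself. I would address this by blowing up the singular points, verifying that the blow-up is smooth (or sufficiently well-behaved to run a clean weight argument), and then using the decomposition theorem to compare ordinary $\ell$-adic cohomology with intersection cohomology. Establishing that these two cohomologies in fact coincide---which the abstract flags as a by-product---is simultaneously a technical tool that justifies the purity/weight reasoning and a consequence of the final explicit description one is trying to prove. This step, together with the local analysis at each singular point, is where I expect the principal technical difficulty of the proof to lie.
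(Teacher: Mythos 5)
Your overall blueprint — stratify, run the spectral sequence, degenerate via Frobenius weights, desingularize by blowing up, compare with intersection cohomology — does coincide with the paper's skeleton. But there are three concrete gaps, two of which are load-bearing.

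First, the stratification you propose is not correct for this variety. For every $U\in S_{\theta}$ one has $\dim(U\cap\tau(U))\geq\theta-1$ by definition, so the sets $X_j=\{U:\dim(U\cap\tau(U))=j\}$ are empty for $j<\theta-1$: your stratification has only two non-empty strata, the open Coxeter-type stratum $X_I(s_{\theta})$ and the finite set of $\mathbb F_q$-rational points $X_I(\mathrm{id})$. This two-term decomposition is not fine enough. The paper's stratification in Proposition \ref{Stratification} has $\theta+1$ strata, indexed by the codimension of the iterated intersection $U\cap\tau(U)\cap\cdots\cap\tau^{\theta'+1}(U)$; without this refinement the spectral sequence does not produce the row-by-row weight structure that drives the degeneration argument.

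Second, and related, the closed complement of the open stratum does not decompose into smaller $S_{\theta'}$'s, so the induction on $\theta$ you envisage does not get off the ground. In reality each stratum $X_{I_{\theta'}}(w_{\theta'})$ is, via the transitivity identity for Deligne--Lusztig induction (Proposition \ref{TransitivityIdentity}), a parabolic induction of the Coxeter variety for $\mathrm{Sp}(2\theta',\mathbb F_q)$. Its compactly supported cohomology is computed independently of the main theorem, by combining Lusztig's description of $\mathrm H_c^{\bullet}(X^{\theta'})$ with the Howlett--Lehrer comparison and Pieri's rule (Theorem \ref{ComputeInduction}, Lemma \ref{FirstTerms}). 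There is no inductive hypothesis on $S_{\theta'}$ anywhere in the proof; the input is Deligne--Lusztig theory for Coxeter varieties, plus explicit Harish--Chandra combinatorics.

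Third, the purity/IH comparison you sketch is not enough to finish. Once purity of $\mathrm H^k(S_{\theta})$ is established (here via the blow-up long exact sequence rather than the decomposition theorem), the agreement of ordinary and intersection cohomology \emph{above the middle degree} gives vanishing of odd cohomology for $k>\theta$ by the weight clash (even by the spectral sequence, odd by $\mathrm{IH}$ purity). To propagate this to $k<\theta$ the paper needs two more ingredients: the hypercohomology spectral sequence of the intersection complex $j_{!*}\overline{\mathbb Q_{\ell}}\simeq\tau_{\leq\theta-1}\mathrm Rj_*\overline{\mathbb Q_{\ell}}$ (which controls the discrepancy between $\mathrm H^\bullet$ and $\mathrm{IH}^\bullet$), and hard Lefschetz for $\mathrm{IH}^\bullet$ to transport the known answer from degree $2(\theta-i)>\theta$ down to degree $2i<\theta$. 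Your proposal does not supply any mechanism for the sub-middle degrees, which is precisely where the extra work lives. In particular, the assertion that ``the vanishing of odd-degree cohomology falls out from the same bookkeeping'' undersells the problem: the paper in fact proves (Lemma \ref{Equivalence}) that statements (1) and (2) are equivalent, and then establishes (2) in the upper half by purity and in the lower half by hard Lefschetz for $\mathrm{IH}$.
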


Let us explain the main steps of the proof. The variety $S_{\theta}$ admits a stratification 
$$S_{\theta} = \bigsqcup_{\theta'=0}^{\theta} X_{I_{\theta'}}(w_{\theta'}),$$
where the $X_{I_{\theta'}}(w_{\theta'})$ are certain Deligne-Lusztig varieties which are \enquote{parabolically induced} from the Coxeter variety $X^{\theta'}$ of the finite group $\mathrm{Sp}(2\theta',\mathbb F_q)$, see Section \ref{Section1} for the precise definitions. There is an induced spectral sequence 
$$E^{\theta',i}_1 = \mathrm H^{\theta'+i}_c(X_{I_{\theta'}}(w_{\theta'})) \implies \mathrm H^{\theta'+i}(S_{\theta}).$$
See Figure \ref{Figure1} for a drawing of $E_1$. The term $E^{\theta',i}_1$ is the parabolic induction of the degree $i$ cohomology group of the Coxeter variety $X^{\theta'}$. The cohomology of such Coxeter varieties has been computed in \cite{cox}, and the parabolic inductions can be computed explicitely via the comparison theorem of \cite{howlett}. In particular, we can determine $E^{\theta',i}_1$ explicitely, see Lemma \ref{FirstTerms}. The Frobenius acts semi-simply on $E_1^{\theta',i}$ with at most $2$ eigenvalues. These eigenvalues are equal to $q^i$ and $-q^{i+1}$, the latter only occuring if $0 \leq i\leq \theta'-2$. Since terms on different rows do not carry any common eigenvalue, the spectral sequence degenerates in $E_2$ and the resulting filtration on the abutment splits. Thus, we are reduced to computing the terms $E_2^{\theta',i}$ explicitely.\\
The most effective way to determine most of the terms $E_2^{\theta',i}$ is to find restrictions on the eigenvalues of the Frobenius on the abutment of the spectral sequence. To do so, we seek a good resolution of the singularities of $S_{\theta}$ when $\theta > 1$. Such a resolution is afforded by the blow-up at singular points, as we prove in Section \ref{Section5}. In fact, we exhibit a certain affine open neighborhood of any given singular point, and observe that it is a finite étale cover of the symmetric determinantal variety of rank $\leq 1$. Incidentally, desingularizations of such determinantal varieties have been studied in \cite{blowup} by means of successive blow-ups. In particular, in our case a single blow-up is required to resolve the singularities. As a consequence, we prove that the Frobenius action on $\mathrm H^k(S_{\theta},\overline{\mathbb Q_{\ell}})$ is pure of weight $2\lfloor\frac{k}{2}\rfloor$. In particular, all terms $E_2^{\theta',i}$ which do not carry any eigenvalue of compatible weight must vanish.\\
In order to determine the remaining terms $E_2^{\theta',i}$, we introduce the intersection cohomology of $S_{\theta}$. Since $S_{\theta}$ has only isolated singularities (when $\theta > 1$), it is well-known that intersection cohomology and $\ell$-adic cohomology agree above the middle degree. In particular, the cohomology groups $\mathrm H^k(S_{\theta})$ for $k > \theta$ with $k$ odd vanish, since all weights of the Frobenius are both odd (by intersection cohomology) and even (by the spectral sequence) at the same time. The hypercohomology spectral sequence associated to the intersection complex, as represented in Figure \ref{Figure2}, allows us to remove the restriction $k > \theta$, thus proving the first part of the main theorem. The second part follows easily given the shape of the spectral sequence represented in Figure \ref{Figure1}. As a by-product, we find out that the intersection complex of $S_{\theta}$ has vanishing cohomology in higher degrees. In particular, the intersection cohomology and the $\ell$-adic cohomology of $S_{\theta}$ actually agree in all degrees.\\

\textbf{\textsc{Notations:}} In this paper, $p$ will always denote an odd prime number and $q$ will be a power of $p$. If $M$ is a matrix with coefficients in a field of characteristic $p$, then $M^{(q)}$ denotes the matrix $M$ with entries raised to the power $q$. The trivial representation of a given group will be denoted $\mathbf 1$. Given a reductive group $G$ with Levi complement $L$, the associated Harish-Chandra induction and restriction functors are denoted $\mathrm R_L^G$ and ${}^*\mathrm R_L^G$ respectively. \\

\textbf{\textsc{Acknowledgement:}} This paper is part of a PhD thesis under the supervision of Pascal Boyer and Naoki Imai. I am grateful for their wise guidance throughout the research.

\section{The closed Deligne-Lusztig variety $S_{\theta}$ for $\mathrm{Sp}(2\theta,\mathbb F_q)$}
\label{Section1}

Let $\mathbf G$ be a connected reductive group over $\mathbb F$, together with a split $\mathbb F_q$-structure given by a geometric Frobenius morphism $F$. For $\mathbf H$ any $F$-stable subgroup of $\mathbf G$, we write $H := \mathbf H^F$ for its group of $\mathbb F_q$-rational points. Let $(\mathbf T,\mathbf B)$ be a pair consisting of a maximal $F$-stable torus $\mathbf T$ contained in an $F$-stable Borel subgroup $\mathbf B$. Let $(\mathbf W,\mathbf S)$ be the associated Coxeter system, where $\mathbf W = \mathrm{N}_{\mathbf G}(\mathbf T)/\mathbf T$. Since the $\mathbb F_q$-structure on $\mathbf G$ is split, the Frobenius $F$ acts trivially on $\mathbf W$. For $I\subset \mathbf S$, let $\mathbf P_I, \mathbf U_I, \mathbf L_I$ be respectively the standard parabolic subgroup of type $I$, its unipotent radical and its unique Levi complement containing $\mathbf T$. Let $\mathbf W_I$ be the subgroup of $\mathbf W$ generated by $I$. For $\mathbf P$ any parabolic subgroup of $\mathbf G$, the associated \textbf{generalized parabolic Deligne-Lusztig variety} is
$$X_{\mathbf P} := \{g\mathbf P\in \mathbf G/\mathbf P\,|\,g^{-1}F(g)\in \mathbf P F(\mathbf P)\}.$$
We say that the variety is \textbf{classical} (as opposed to generalized) when in addition the parabolic subgroup $\mathbf P$ contains an $F$-stable Levi complement. Note that $\mathbf P$ itself need not be $F$-stable. We may give an equivalent definition using the Coxeter system $(\mathbf W,\mathbf S)$. For $I\subset \mathbf S$, let ${}^I\mathbf W^{I}$ be the set of elements $w\in \mathbf W$ which are $I$-reduced-$I$. For $w\in\, ^I\mathbf W^{I}$, the associated generalized parabolic Deligne-Lusztig variety is
$$X_I(w)  := \{g\mathbf P_I\in \mathbf G/\mathbf P_I \,|\, g^{-1}F(g)\in \mathbf P_Iw\mathbf P_I\}.$$
The variety $X_I(w)$ is classical when $w^{-1}Iw = I$, and it is defined over $\mathbb F_q$. The dimension is given by $\dim X_I(w) = l(w)+ \dim \mathbf G/\mathbf P_{I\cap wIw^{-1}} - \dim \mathbf G/\mathbf P_I$ where $l(w)$ denotes the length of $w$ with respect to $\mathbf S$.\\
Let $\theta\geq 0$ and let $V$ be a $2\theta$-dimensional $\mathbb F_{q}$-vector space equipped with a non-degenerate symplectic form $(\cdot,\cdot):V\times V \to \mathbb F_{q}$. Fix a basis $(e_1,\ldots,e_{2\theta})$ in which $(\cdot,\cdot)$ is described by the matrix 
$$\Omega := \begin{pmatrix}
0 & A_{\theta} \\
-A_{\theta} & 0
\end{pmatrix},$$
where $A_{\theta}$ denotes the matrix having $1$ on the anti-diagonal and $0$ everywhere else. If $k$ is a field extension of $\mathbb F_q$, let $V_k := V \otimes_{\mathbb F_q} k$ denote the scalar extension to $k$ equipped with its induced $k$-symplectic form $(\cdot,\cdot)$. Let $\tau:V_k \xrightarrow{\sim} V_k$ denote the map $\mathrm{id}\otimes \sigma$, where $\sigma(x) := x^q$. If $U \subset V_k$, let $U^{\perp}$ denote its orthogonal. We consider the finite symplectic group $\mathrm{Sp}(V,(\cdot,\cdot)) \simeq \mathrm{Sp}(2\theta,\mathbb F_q)$, where the RHS is defined with respect to $\Omega$. It can be identified with $G = \mathbf G^F$ where $\mathbf G$ is the symplectic group $\mathrm{Sp}(V_{\mathbb F},(\cdot,\cdot)) \simeq \mathrm{Sp}(2\theta,\mathbb F)$ and $F(M) := M^{(q)}$. Let $\mathbf T \subset \mathbf G$ be the maximal torus of diagonal symplectic matrices and let $\mathbf B \subset \mathbf G$ be the Borel subgroup of upper-triangular symplectic matrices. The Weyl system of $(\mathbf T,\mathbf B)$ is identified with $(W_{\theta},\mathbf S)$ where $W_{\theta}$ is the finite Coxeter group of type $B_{\theta}$ and $\mathbf S = \{s_1,\ldots ,s_{\theta}\}$ is the set of simple reflexions. They satisfy the following relations 
\begin{align*}
(s_{\theta}s_{\theta-1})^4 & = 1, & (s_is_{i-1})^3 & = 1, & & \forall \; 2\leq i \leq \theta-1, \\
(s_is_j)^2 & = 1, & & & & \forall\; |i-j| \geq 2.
\end{align*} 
Concretely, the simple reflexion $s_i$ acts on $V$ by exchanging $e_i$ and $e_{i+1}$ as well as $e_{2\theta-i}$ and $e_{2\theta-i+1}$ for $1\leq i \leq \theta-1$, whereas $s_{\theta}$ exchanges $e_{\theta}$ and $e_{\theta+1}$. We define 
$$I:=\{s_1,\ldots ,s_{\theta-1}\} = \mathbf S\setminus \{s_{\theta}\}.$$
We consider the generalized Deligne-Lusztig variety $X_{I}(s_{\theta})$. Since $s_{\theta}s_{\theta-1}s_{\theta} \not \in I$, it is not a classical Deligne-Lusztig variety. Let $S_{\theta} := \overline{X_{I}(s_{\theta})}$ be its closure in $\mathbf G/\mathbf P_I$. This variety has been introduced in \cite{RTW}, as it is isomorphic to the closed Bruhat-Tits strata of type $\theta$ in the supersingular locus of the $\mathrm{GU}(1,n-1)$ Shimura variety over a ramified prime. We recall some geometric facts on $S_{\theta}$ which are proved in \cite{RTW} Section 5.

\begin{prop}\label{RationalPointsStheta}
The variety $S_{\theta}$ is normal and projective. The variety $S_1$ is isomorphic to the projective line $\mathbb P^1$, and for $\theta \geq 2$ the variety $S_{\theta}$ has isolated singularities. If $k$ is a field extension of $\mathbb F_q$, we have
$$S_{\theta}(k) \simeq \{U \subset V_k \,|\, U^{\perp} = U \text{ and }U\cap \tau(U) \overset{\leq 1}{\subset} U\},$$
where $\overset{\leq 1}{\subset}$ denotes an inclusion of subspaces with index at most $1$. There is a decomposition 
$$S_{\theta} = X_{I}(\mathrm{id}) \sqcup X_{I}(s_{\theta}),$$
where $X_{I}(\mathrm{id})$ is closed and of dimension $0$, and $X_{I}(s_{\theta})$ is open, dense of dimension $\theta$. They correspond respectively to $k$-points $U$ having $U = \tau(U)$ or $U\cap \tau(U) \subsetneq U$. If $\theta\geq 2$ then $S_{\theta}$ is singular at the points of $X_I(\mathrm{id})$.
\end{prop}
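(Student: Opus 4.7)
The plan is to identify $\mathbf G/\mathbf P_I$ with the Lagrangian Grassmannian of $V_{\mathbb F}$, rewrite the Deligne-Lusztig conditions as conditions on relative positions of Lagrangians, and then analyse the local structure of $S_\theta$ at a point of $X_I(\mathrm{id})$ via an explicit affine chart. The first ingredient is to observe that $\mathbf P_I$ is the stabiliser of the standard Lagrangian $U_0 := \langle e_1,\ldots,e_\theta\rangle$, so the map $g\mathbf P_I \mapsto gU_0$ identifies $\mathbf G/\mathbf P_I$ with $L(V_{\mathbb F})$. Projectivity of $S_\theta\subset L(V_{\mathbb F})$ is then automatic.

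Since $\mathbf P_I$ is $F$-stable, the condition $g^{-1}F(g)\in \mathbf P_I w\mathbf P_I$ defining $X_I(w)$ translates to the pair $(U,\tau(U))$, with $U = gU_0$, lying in the $\mathbf G$-orbit of pairs of Lagrangians indexed by $w\in \mathbf W_I\backslash W_\theta/\mathbf W_I$. This double coset space is in bijection with $\{0,1,\ldots,\theta\}$ via the relative-position invariant $w\mapsto \dim(U\cap\tau(U))$; a direct inspection of minimal length representatives shows that $\mathbf W_I\cdot\mathrm{id}\cdot\mathbf W_I$ and $\mathbf W_I s_\theta\mathbf W_I$ correspond respectively to $U=\tau(U)$ and $\dim(U\cap\tau(U)) = \theta-1$. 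This yields the description of $S_\theta(k)$, the decomposition $S_\theta = X_I(\mathrm{id})\sqcup X_I(s_\theta)$ via the Bruhat order $\mathrm{id}<s_\theta$, and identifies $X_I(\mathrm{id})$ with the finite set of $F$-stable Lagrangians. The dimension $\dim X_I(s_\theta) = \theta$ follows from the formula $l(w)+\dim\mathbf G/\mathbf P_{I\cap wIw^{-1}}-\dim\mathbf G/\mathbf P_I$ evaluated at $w=s_\theta$. The case $\theta=1$ is immediate, as $L(V)=\mathbb P(V)\simeq \mathbb P^1$ and the intersection condition is vacuous.

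The main obstacle is the local analysis at a point $U\in X_I(\mathrm{id})$, required both for normality and to locate the singular points when $\theta\geq 2$. My plan is to work in the standard affine chart of $L(V_{\mathbb F})$ centred at $U$: fixing a rational Lagrangian complement $U_1$, Lagrangians transverse to $U_1$ are parametrised by symmetric $\theta\times\theta$ matrices $M$ via $\Gamma_M = \{v+Mv:v\in U\}$. Since $U$ and $U_1$ are rational one has $\tau(\Gamma_M) = \Gamma_{M^{(q)}}$, so
$$\dim(\Gamma_M\cap\tau(\Gamma_M)) = \theta - \mathrm{rk}(M-M^{(q)}).$$
The local equation of $S_\theta$ in this chart is therefore $\mathrm{rk}(M-M^{(q)})\leq 1$. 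The componentwise map $\Phi:M\mapsto M-M^{(q)}$ on the affine space of symmetric matrices is finite étale (its differential is the identity, since the differential of $x\mapsto x^q$ vanishes in characteristic $p$), so our chart is a finite étale cover of the classical determinantal variety $D_{\theta,1}$ of symmetric $\theta\times\theta$ matrices of rank at most $1$. Since $D_{\theta,1}$ is irreducible, Cohen-Macaulay, normal, smooth away from the origin, and singular there precisely when $\theta\geq 2$, these properties transfer to $S_\theta$ through the étale cover, giving the normality of $S_\theta$, smoothness on $X_I(s_\theta)$, and the fact that each point of $X_I(\mathrm{id})$ is an isolated singular point exactly when $\theta\geq 2$. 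The genuinely delicate step is setting up this étale local model; everything else then reduces to the known geometry of symmetric determinantal varieties.
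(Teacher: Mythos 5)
Your proof is sound, and in fact anticipates the local model that the paper itself introduces later. Note first that the paper does not give a proof of this proposition at all: it simply cites \cite{RTW} Section~5. So you are supplying an argument where the paper supplies a reference. That said, the key technical ingredient in your proposal --- the affine chart around an $\mathbb F_q$-rational Lagrangian $U$ via graphs $\Gamma_M$ of symmetric matrices, the identity $\tau(\Gamma_M) = \Gamma_{M^{(q)}}$, the computation $\dim(\Gamma_M\cap\tau(\Gamma_M)) = \theta - \mathrm{rk}(M^{(q)}-M)$, and the observation that the Lang map $M\mapsto M^{(q)}-M$ is a finite \'etale cover of the rank-$\leq 1$ symmetric determinantal variety $V_\theta'$ --- is exactly the content of the unnamed Lemma in Section~5 of the paper and the subsequent proof of Proposition~\ref{SmoothBlowup}. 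The paper uses this model to analyse the blow-up; you use it to deduce normality and the location of the singularities directly from the classical facts about symmetric determinantal varieties (irreducibility, normality, Cohen--Macaulayness, singular only at the origin when $\theta\geq 2$). The first half of your argument, identifying $\mathbf G/\mathbf P_I$ with the Lagrangian Grassmannian, translating $g^{-1}F(g)\in\mathbf P_I w\mathbf P_I$ into the relative-position invariant $\dim(U\cap\tau(U))$, reading off the Bruhat closure relation $\mathrm{id}<s_\theta$, and computing $\dim X_I(s_\theta)=\theta$ from the dimension formula, is standard Deligne--Lusztig bookkeeping and is correct.

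Two small points worth tightening. First, to pass from local statements in the chart to global statements about $S_\theta$ you should say explicitly that the smooth locus of $S_\theta$ is covered by $X_I(s_\theta)$ (smooth, being a classical Deligne--Lusztig variety) together with the smooth parts of the charts $V_\theta$, so normality and smoothness away from $X_I(\mathrm{id})$ hold globally; the \'etale cover only sees a neighbourhood of each rational Lagrangian. Second, the existence of a rational Lagrangian complement $U_1$ to a rational Lagrangian $U$ should be flagged (it follows from Witt's theorem over $\mathbb F_q$), since it is what makes $\tau(\Gamma_M)=\Gamma_{M^{(q)}}$ valid. Neither affects the correctness of the argument.
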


For $0 \leq \theta' \leq \theta$, define 
$$I_{\theta'} := \{s_1,\ldots ,s_{\theta-\theta'-1}\}$$
and $w_{\theta'} := s_{\theta+1-\theta'}\ldots s_{\theta}$. In particular $I_0 = I$, $I_{\theta-1} = I_{\theta} = \emptyset$, $w_0 = \mathrm{id}$ and $w_1 = s_{\theta}$. 

\begin{prop}\label{Stratification}
There is a stratification into locally closed subvarieties
$$S_{\theta} = \bigsqcup_{\theta'=0}^{\theta} X_{I_{\theta'}}(w_{\theta'}).$$
The stratum $X_{I_{\theta'}}(w_{\theta'})$ corresponds to points $U$ such that $\dim(U\cap \tau(U) \cap \ldots \cap \tau^{\theta'+1}(U)) = \theta - \theta'$. The closure in $S_{\theta}$ of a stratum $X_{I_{\theta'}}(w_{\theta'})$ is the union of all the strata $X_{I_{t}}(w_{t})$ for $t\leq \theta'$. The stratum $X_{I_{\theta'}}(w_{\theta'})$ is of dimension $\theta'$, and $X_{I_{\theta}}(w_{\theta})$ is open, dense and irreducible.
\end{prop}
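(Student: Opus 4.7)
The approach is in three stages: (i) establish a descending-chain lemma on $\tau$-intersections inside a Lagrangian, (ii) match each stratum with the corresponding Deligne-Lusztig variety via descent to a symplectic quotient, (iii) verify closures, dimensions and irreducibility. For \emph{Stage 1}, given $U \in S_{\theta}(k)$ set $W_j := U \cap \tau(U) \cap \ldots \cap \tau^j(U)$. Applying $\tau^j$ to the condition $\dim(U \cap \tau(U)) \geq \theta-1$ of Proposition \ref{RationalPointsStheta} shows that $\tau^j(U) \cap \tau^{j+1}(U)$ has codimension at most $1$ in $\tau^j(U)$; intersecting with $W_j \subset \tau^j(U)$ yields $\dim W_{j+1} \geq \dim W_j - 1$. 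Moreover if $W_j = W_{j+1}$, that is $W_j \subset \tau^{j+1}(U)$, then together with $W_j \subset \tau^i(U)$ for $1 \leq i \leq j$ one has $W_j \subseteq \tau(W_j)$, hence $W_j = \tau(W_j)$ by dimension count; the chain is then stationary from step $j$ onwards. Therefore there is a unique $\theta'(U) \in \{0, \ldots, \theta\}$ such that $\dim W_j = \theta - j$ for $j \leq \theta'(U)$ and $\dim W_j = \theta - \theta'(U)$ for $j \geq \theta'(U)$, and the stratum indexed by $\theta'$ is characterised set-theoretically by $\dim W_{\theta'+1} = \theta - \theta'$.

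For \emph{Stage 2}, the subspace $W := W_{\theta'}(U)$ attached to any $U$ in the above stratum is $\tau$-stable, isotropic, of dimension $\theta - \theta'$, and thus $\mathbb F_q$-rational. The symplectic form descends to $W^{\perp}/W$ (of dimension $2\theta'$), and $\bar U := U/W$ is a Lagrangian whose iterated $\tau$-intersections drop to $0$ in exactly $\theta'$ steps, i.e.\ by Stage 1 applied to $S_{\theta'}(W^{\perp}/W)$ the class $\bar U$ lies in the top Coxeter stratum. Conversely, any pair consisting of a rational isotropic $W$ of dimension $\theta - \theta'$ and such a $\bar U$ lifts uniquely to a point of the stratum. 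This identifies the stratum with the parabolic induction from the Levi $\mathbf L_{I_{\theta'}} \simeq \mathrm{GL}_{\theta-\theta'} \times \mathrm{Sp}_{2\theta'}$ of the Coxeter variety of the symplectic factor; by comparison with the Bruhat decomposition of $\mathbf P_{I_{\theta'}} \backslash \mathbf G / \mathbf P_{I_{\theta'}}$ and the observation that $w_{\theta'} = s_{\theta-\theta'+1} \cdots s_{\theta}$ is precisely the Coxeter element of that factor, this parabolic induction coincides with $X_{I_{\theta'}}(w_{\theta'})$.

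Finally for \emph{Stage 3}, upper semi-continuity of the dimension of $W_{\theta'+1}(U)$ in a family yields the asserted closure relations $\overline{X_{I_{\theta'}}(w_{\theta'})} = \bigsqcup_{t \leq \theta'} X_{I_t}(w_t)$. Every $s_i \in I_{\theta'}$ has index $\leq \theta - \theta' - 1$ while every factor of $w_{\theta'}$ has index $\geq \theta - \theta' + 1$, so the indices differ by at least $2$ and $w_{\theta'}$ centralises $\mathbf W_{I_{\theta'}}$; the classical dimension formula then gives $\dim X_{I_{\theta'}}(w_{\theta'}) = l(w_{\theta'}) = \theta'$, and irreducibility of the top stratum $X_\emptyset(s_1 \cdots s_\theta)$ is the classical irreducibility of a Coxeter variety of a quasi-simple group. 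The main difficulty is Stage 2: upgrading the natural bijection on $k$-points to an isomorphism of varieties, and in particular matching the relative-position condition $g^{-1}F(g) \in \mathbf P_{I_{\theta'}} w_{\theta'} \mathbf P_{I_{\theta'}}$ with the symplectic-quotient datum $(W, \bar U)$, which is essentially the content of \cite{RTW} Section 5.
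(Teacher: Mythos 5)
The paper does not prove this proposition itself; it is quoted from \cite{RTW} Section 5, as announced just before Proposition~\ref{RationalPointsStheta}. Your sketch takes a reasonable route, and Stage~1 together with the commutativity/dimension computation in Stage~3 are sound. However there are two genuine gaps.

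First, in Stage 2 you assert $\mathbf L_{I_{\theta'}} \simeq \mathrm{GL}_{\theta-\theta'} \times \mathrm{Sp}_{2\theta'}$. This is false: since $I_{\theta'} = \{s_1,\ldots,s_{\theta-\theta'-1}\}$ consists only of type-$A$ reflexions, the Levi $\mathbf L_{I_{\theta'}}$ is $\mathrm{GL}_{\theta-\theta'} \times (\mathbb G_m)^{\theta'}$, and the parabolic $\mathbf P_{I_{\theta'}}$ stabilizes a full isotropic flag of dimensions $\theta-\theta',\ldots,\theta$. The Levi carrying the symplectic Coxeter factor is $\mathbf L_{K_{\theta'}}$ with $K_{\theta'} = \mathbf S\setminus\{s_{\theta-\theta'}\} \supsetneq I_{\theta'}$, whose parabolic $\mathbf P_{K_{\theta'}}$ stabilizes the single rational isotropic subspace $W$ of your construction. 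The identification of the stratum with a parabolic induction therefore involves two distinct parabolics $\mathbf P_{I_{\theta'}} \subset \mathbf P_{K_{\theta'}}$ at once, which is exactly the content of the transitivity identity (Proposition~\ref{TransitivityIdentity}); conflating them obscures the argument you need. Second, in Stage~3 the appeal to upper semi-continuity of $\dim W_{\theta'+1}$ only proves $\overline{X_{I_{\theta'}}(w_{\theta'})} \subseteq \bigsqcup_{t\leq\theta'} X_{I_t}(w_t)$: the right-hand side is closed and contains the stratum. To obtain equality one must also show that each lower stratum $X_{I_t}(w_t)$ with $t<\theta'$ is actually reached in the closure, e.g.\ by proving the closed set $\bigsqcup_{t\leq\theta'} X_{I_t}(w_t)$ is irreducible of dimension $\theta'$ so that $X_{I_{\theta'}}(w_{\theta'})$ is its unique dense open stratum, or by exhibiting explicit degenerations. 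Semi-continuity alone gives no such density. Finally, as you yourself note, the set-theoretic bijection of Stage~2 must be promoted to an isomorphism of schemes matching the relative-position condition; that is the core technical work of \cite{RTW} Section~5, which the sketch defers to the reference.
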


It follows in particular that $S_{\theta}$ is irreducible as well. It turns out that the strata $X_{I_{\theta'}}(w_{\theta'})$ are related to Coxeter varieties for symplectic groups of smaller sizes. For $0 \leq \theta' \leq \theta$, define
$$K_{\theta'} := \{s_1,\ldots s_{\theta-\theta'-1}, s_{\theta-\theta'+1}, \ldots , s_\theta\} = \mathbf S \setminus \{s_{\theta-\theta'}\}.$$
Note that $K_0 = I_0 = I$ and $K_{\theta} = \mathbf S$. We have $I_{\theta'} \subset K_{\theta'}$ with equality if and only if $\theta'=0$.

\begin{prop}\label{TransitivityIdentity}
There is an $\mathrm{Sp}(2\theta,\mathbb F_p)$-equivariant isomorphism
$$X_{I_{\theta'}}(w_{\theta'}) \simeq \mathrm{Sp}(2\theta,\mathbb F_q)/U_{K_{\theta'}} \times_{L_{K_{\theta'}}} X_{I_{\theta'}}^{\mathbf L_{K_{\theta'}}}(w_{\theta'}),$$
where $X_{I_{\theta'}}^{\mathbf L_{K_{\theta'}}}(w_{\theta'})$ is a Deligne-Lusztig variety for $\mathbf L_{K_{\theta'}}$. The zero-dimensional variety $\mathrm{Sp}(2\theta,\mathbb F_q)/U_{K_{\theta'}}$ has a left action of $\mathrm{Sp}(2\theta,\mathbb F_q)$ and a right action of $L_{K_{\theta'}}$.
\end{prop}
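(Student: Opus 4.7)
The plan is to realize the projection $\pi \colon X_{I_{\theta'}}(w_{\theta'}) \to \mathbf G/\mathbf P_{K_{\theta'}}$, induced from the inclusion $\mathbf P_{I_{\theta'}} \subset \mathbf P_{K_{\theta'}}$, as a $\mathrm{Sp}(2\theta,\mathbb F_q)$-equivariant bundle over a finite base whose fiber is the desired Deligne-Lusztig variety for the Levi. The starting observation is that $w_{\theta'} = s_{\theta+1-\theta'}\cdots s_{\theta}$ uses only simple reflections in $K_{\theta'} = \mathbf S \setminus \{s_{\theta-\theta'}\}$, so $w_{\theta'} \in \mathbf W_{K_{\theta'}}$ and admits a representative inside $\mathbf L_{K_{\theta'}}$. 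Combined with $I_{\theta'} \subset K_{\theta'}$, this places the double coset $\mathbf P_{I_{\theta'}} w_{\theta'} \mathbf P_{I_{\theta'}}$ entirely inside the $F$-stable parabolic $\mathbf P_{K_{\theta'}}$. Hence $g\mathbf P_{K_{\theta'}} = F(g)\mathbf P_{K_{\theta'}}$ for every $g\mathbf P_{I_{\theta'}} \in X_{I_{\theta'}}(w_{\theta'})$, and Lang-Steinberg identifies the image of $\pi$ with the finite set $(\mathbf G/\mathbf P_{K_{\theta'}})^F = \mathrm{Sp}(2\theta,\mathbb F_q)/P_{K_{\theta'}}$.

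The second step is the fiber computation over the base point $e\mathbf P_{K_{\theta'}}$. Since $\mathbf U_{K_{\theta'}} \subset \mathbf U_{I_{\theta'}} \subset \mathbf P_{I_{\theta'}}$, the Levi decomposition yields $\mathbf P_{I_{\theta'}} = \mathbf U_{K_{\theta'}} \cdot (\mathbf L_{K_{\theta'}} \cap \mathbf P_{I_{\theta'}})$, and this produces a natural identification of $\mathbf P_{K_{\theta'}}/\mathbf P_{I_{\theta'}}$ with $\mathbf L_{K_{\theta'}}/(\mathbf L_{K_{\theta'}} \cap \mathbf P_{I_{\theta'}})$ by $ul\mathbf P_{I_{\theta'}} \mapsto l(\mathbf L_{K_{\theta'}} \cap \mathbf P_{I_{\theta'}})$. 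Writing $p = ul$ with $u \in \mathbf U_{K_{\theta'}}$ and $l \in \mathbf L_{K_{\theta'}}$, the normality of $\mathbf U_{K_{\theta'}}$ in $\mathbf P_{K_{\theta'}}$ splits
$$p^{-1}F(p) \;=\; \bigl(l^{-1}u^{-1}F(u)l\bigr)\cdot\bigl(l^{-1}F(l)\bigr)$$
into its unipotent and Levi components. Because $w_{\theta'}$ normalizes $\mathbf U_{K_{\theta'}}$ (it lies in $\mathbf W_{K_{\theta'}}$), the double coset rewrites as $\mathbf P_{I_{\theta'}} w_{\theta'} \mathbf P_{I_{\theta'}} = \mathbf U_{K_{\theta'}}\cdot (\mathbf L_{K_{\theta'}} \cap \mathbf P_{I_{\theta'}})\, w_{\theta'}\, (\mathbf L_{K_{\theta'}} \cap \mathbf P_{I_{\theta'}})$, so the Deligne-Lusztig condition on $p$ becomes equivalent to the pure Levi condition $l^{-1}F(l) \in (\mathbf L_{K_{\theta'}} \cap \mathbf P_{I_{\theta'}})\, w_{\theta'}\, (\mathbf L_{K_{\theta'}} \cap \mathbf P_{I_{\theta'}})$. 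The fiber of $\pi$ over $e\mathbf P_{K_{\theta'}}$ is thus exactly $X^{\mathbf L_{K_{\theta'}}}_{I_{\theta'}}(w_{\theta'})$.

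The last step is globalization: transporting this computation along the $\mathrm{Sp}(2\theta,\mathbb F_q)$-action, $\pi$ becomes a $\mathrm{Sp}(2\theta,\mathbb F_q)$-equivariant fiber bundle over the finite set $\mathrm{Sp}(2\theta,\mathbb F_q)/P_{K_{\theta'}}$ with constant fiber $X^{\mathbf L_{K_{\theta'}}}_{I_{\theta'}}(w_{\theta'})$. The stabilizer $P_{K_{\theta'}} = U_{K_{\theta'}} \rtimes L_{K_{\theta'}}$ acts on the fiber only through its Levi quotient, because the identification of the fiber already factored through the $\mathbf U_{K_{\theta'}}$-quotient. This gives the balanced-product description
$$X_{I_{\theta'}}(w_{\theta'}) \;\cong\; \mathrm{Sp}(2\theta,\mathbb F_q) \times^{P_{K_{\theta'}}} X^{\mathbf L_{K_{\theta'}}}_{I_{\theta'}}(w_{\theta'}) \;\cong\; \mathrm{Sp}(2\theta,\mathbb F_q)/U_{K_{\theta'}} \times_{L_{K_{\theta'}}} X^{\mathbf L_{K_{\theta'}}}_{I_{\theta'}}(w_{\theta'}),$$
which is equivariant by construction.

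I expect the main technical point to be the double-coset decomposition $\mathbf P_{I_{\theta'}} w_{\theta'} \mathbf P_{I_{\theta'}} = \mathbf U_{K_{\theta'}}\cdot (\mathbf L_{K_{\theta'}} \cap \mathbf P_{I_{\theta'}})\,w_{\theta'}\,(\mathbf L_{K_{\theta'}} \cap \mathbf P_{I_{\theta'}})$. It is what decouples the unipotent and Levi factors and makes the fiber come out exactly as a Deligne-Lusztig variety for the Levi rather than as a more complicated subvariety; it hinges on the fact that both $w_{\theta'}$ and $\mathbf L_{K_{\theta'}} \cap \mathbf P_{I_{\theta'}}$ normalize $\mathbf U_{K_{\theta'}}$ inside $\mathbf P_{K_{\theta'}}$.
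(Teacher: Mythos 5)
Your proof is correct. The paper itself does not reproduce an argument; it simply cites the result as a special case of the transitivity identity for Deligne--Lusztig induction (Digne--Michel, \emph{op.\ cit.}, Prop.\ 7.19, resp.\ 9.1.8). What you have written is, in effect, a direct reconstruction of the proof behind that citation, and it goes through: the observation that $\mathbf P_{I_{\theta'}} w_{\theta'}\mathbf P_{I_{\theta'}} \subset \mathbf P_{K_{\theta'}}$ (using $I_{\theta'}\subset K_{\theta'}$ and $w_{\theta'}\in\mathbf W_{K_{\theta'}}$) correctly shows, via Lang--Steinberg, that the natural projection to $\mathbf G/\mathbf P_{K_{\theta'}}$ lands in the finite set of $F$-fixed points; the semidirect decomposition $\mathbf P_{I_{\theta'}} = \mathbf U_{K_{\theta'}}\rtimes(\mathbf L_{K_{\theta'}}\cap\mathbf P_{I_{\theta'}})$ and the double-coset rewriting $\mathbf P_{I_{\theta'}}\dot w_{\theta'}\mathbf P_{I_{\theta'}} = \mathbf U_{K_{\theta'}}\cdot(\mathbf L_{K_{\theta'}}\cap\mathbf P_{I_{\theta'}})\,\dot w_{\theta'}\,(\mathbf L_{K_{\theta'}}\cap\mathbf P_{I_{\theta'}})$ (after choosing a representative $\dot w_{\theta'}\in \mathbf L_{K_{\theta'}}$ normalizing $\mathbf U_{K_{\theta'}}$) correctly decouples unipotent and Levi parts, identifying the fiber over the base point with $X^{\mathbf L_{K_{\theta'}}}_{I_{\theta'}}(w_{\theta'})$; and the balanced product $G\times^{P_{K_{\theta'}}} X^{\mathbf L_{K_{\theta'}}}_{I_{\theta'}}(w_{\theta'})$, with $P_{K_{\theta'}}$ acting on the fiber through its Levi quotient, agrees with the statement's $\mathrm{Sp}(2\theta,\mathbb F_q)/U_{K_{\theta'}}\times_{L_{K_{\theta'}}}X^{\mathbf L_{K_{\theta'}}}_{I_{\theta'}}(w_{\theta'})$. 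One could tighten the phrasing around ``$w_{\theta'}$ normalizes $\mathbf U_{K_{\theta'}}$'' (it is a representative in the Levi that normalizes it) and note explicitly that $\mathbf L_{K_{\theta'}}\cap\mathbf P_{I_{\theta'}}$ is the standard parabolic of type $I_{\theta'}$ in the Levi, but these are cosmetic. Since the paper itself defers to the literature here, the practical upshot is that you supplied a valid standalone proof where the paper offers only a pointer.
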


\begin{proof}
It is a special case of the geometric identity used to prove transitivity of the Deligne-Lusztig induction functor. We refer to \cite{dm} Proposition 7.19 or \cite{dmbook} Proposition 9.1.8. 
\end{proof}

The Levi complement $\mathbf L_{K_{\theta'}}$ is isomorphic to $\mathrm{GL}(\theta-\theta') \times \mathrm{Sp}(2\theta')$, and its Weyl group is isomorphic to $\mathfrak S_{\theta-\theta'}\times W_{\theta'}$. Via this decomposition, the permutation $w_{\theta'}$ corresponds to $\mathrm{id}\times w_{\theta'}$. The Deligne-Lusztig variety $X_{I_{\theta'}}^{\mathbf L_{K_{\theta'}}}(w_{\theta'})$ decomposes as a product 
$$X_{I_{\theta'}}^{\mathbf L_{K_{\theta'}}}(w_{\theta'}) = X_{\mathbf I_{\theta'}}^{\mathrm{GL(\theta-\theta')}}(\mathrm{id}) \times X_{\emptyset}^{\mathrm{Sp}(2\theta')}(w_{\theta'}).$$
The variety $X_{\mathbf I_{\theta'}}^{\mathrm{GL(\theta-\theta')}}(\mathrm{id})$ is just a single point, but $X_{\emptyset}^{\mathrm{Sp}(2\theta')}(w_{\theta'})$ is the Coxeter variety for the symplectic group of size $2\theta'$. Indeed, $w_{\theta'}$ is a Coxeter element, ie. the product of all the simple reflexions of the Weyl group of $\mathrm{Sp}(2\theta')$.

\section{Unipotent representations of the finite symplectic group}
\label{Section2}

Recall that a (complex) irreducible representation of a finite group of Lie type $G = \mathbf G^{F}$ is said to be \textbf{unipotent}, if it occurs in the Deligne-Lusztig induction of the trivial representation of some maximal rational torus. Equivalently, it is unipotent if it occurs in the cohomology (with coefficient in $\overline{\mathbb Q_{\ell}}$ and $\ell \not = p$) of some Deligne-Lusztig variety of the form $X_{\mathbf B}$, with $\mathbf B$ a Borel subgroup of $\mathbf G$ containing a maximal rational torus.
In this section, we recall the classification of the unipotent representations of the finite symplectic groups. The underlying combinatorics is described by Lusztig's notion of symbols. Our main reference is \cite{geck} Section 4.4.

\begin{defi}
Let $\theta \geq 1$ and let $d$ be an odd positive integer. The set of \textbf{symbols of rank $\theta$ and defect $d$} is 
$$\hspace{-30pt}\mathcal Y^1_{d,\theta} := \left\{ S = (X,Y) \,\bigg |\, \begin{array}{l}
X = (x_1,\ldots ,x_{r+d})\\
Y = (y_1,\ldots ,y_r)
\end{array},
x_i,y_j \in \mathbb Z_{\geq 0}, 
\begin{array}{l}
x_{i+1}-x_i \geq 1,\\
y_{j+1}-y_j \geq 1,
\end{array}
\mathrm{rk}(S) = \theta\right\} \bigg / (\text{shift}),$$
where the shift operation is defined by $\text{shift}(X,Y) := (\{0\}\sqcup(X+1),\{0\}\sqcup(Y+1))$, and where the rank of $S$ is given by
$$\mathrm{rk}(S) := \sum_{s\in S} s - \left\lfloor\frac{(\#S-1)^2}{4}\right\rfloor.$$
\end{defi}

Note that the formula defining the rank is invariant under the shift operation, therefore it is well defined. By \cite{classical}, we have $\mathrm{rk}(S) \geq \left\lfloor\frac{d^2}{4}\right\rfloor$ so in particular $\mathcal Y^1_{d,\theta}$ is empty for $d$ big enough. We write $\mathcal Y^1_{\theta}$ for the union of the $\mathcal Y^1_{d,\theta}$ with $d$ odd, this is a finite set.

\begin{ex}\label{ExempleSymbols}
In general, a symbol $S = (X,Y)$ will be written 
$$S = \setlength\arraycolsep{2pt}
\begin{pmatrix}
x_1 & \ldots & x_{r} & \ldots & x_{r+d}\\
y_1 & \ldots & y_{r} & &
\end{pmatrix}.$$
We refer to $X$ and $Y$ as the first and second rows of $S$. The $6$ elements of $\mathcal Y^1_{2}$ are given by 
\begin{align*}
\setlength\arraycolsep{2pt}
\begin{pmatrix}
2 \\
{}
\end{pmatrix},
&&
\begin{pmatrix}
0 & 1 \\
2 &
\end{pmatrix},
&&
\begin{pmatrix}
0 & 2 \\
1 &
\end{pmatrix},
&&
\begin{pmatrix}
1 & 2 \\
0 &
\end{pmatrix},
&&
\begin{pmatrix}
0 & 1 & 2\\
1 & 2 &
\end{pmatrix},
&&
\begin{pmatrix}
0 & 1 & 2 \\
  &   &
\end{pmatrix}.
\end{align*}
The last symbol has defect $3$ whereas all the other symbols have defect $1$.
\end{ex}

The symbols can be used to classify the unipotent representations of the finite symplectic group, cf \cite{classical} Theorem 8.2.

\begin{theo}\label{ClassificationUnip} 
There is a natural bijection between $\mathcal Y^1_{\theta}$ and the set of equivalence classes of unipotent representations of $\mathrm{Sp}(2\theta,\mathbb F_q)$.
\end{theo}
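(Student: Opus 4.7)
The theorem is the classical result of Lusztig on the classification of unipotent representations of finite classical groups, so the plan is essentially to describe the standard Harish-Chandra–theoretic strategy underlying the proof in \cite{classical}, rather than to reprove it from scratch. The starting point is the fact that every unipotent representation of a finite reductive group $G = \mathbf G^F$ belongs to a unique Harish-Chandra series: it occurs as a constituent of $\mathrm R_{\mathbf L}^{\mathbf G}(\pi)$ for a unique (up to $G$-conjugacy) pair $(\mathbf L,\pi)$ consisting of an $F$-stable Levi $\mathbf L$ of an $F$-stable parabolic and a cuspidal unipotent representation $\pi$ of $L$. For $\mathbf G = \mathrm{Sp}(2\theta)$ with the split structure, the $F$-stable Levis of $F$-stable parabolics are precisely the groups of the form $\mathbf L \simeq \mathrm{GL}(a_1)\times\cdots\times\mathrm{GL}(a_k)\times\mathrm{Sp}(2m)$ with $a_1+\cdots+a_k+m = \theta$. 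Hence the proof splits into two separate tasks: classify cuspidal unipotents of each such Levi, and describe the decomposition of the corresponding Harish-Chandra induction.

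For the first task, one uses the fact that the only cuspidal unipotent representation of $\mathrm{GL}(n,\mathbb F_q)$ is the trivial one (and only for $n=1$), while for $\mathrm{Sp}(2m,\mathbb F_q)$ a cuspidal unipotent exists if and only if $m = d(d+1)$ for some integer $d\geq 0$, in which case it is unique. This can be checked by a dimension / character calculation, and is the source of the odd defects $2d+1$ appearing in the definition of $\mathcal Y^1_\theta$: to each cuspidal one attaches the ``empty'' symbol of defect $2d+1$ and rank $d(d+1) = \lfloor (2d+1)^2/4\rfloor$, which matches the lower bound $\mathrm{rk}(S)\geq \lfloor d^2/4\rfloor$ recalled right after the definition of symbols.

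For the second task, one fixes a Harish-Chandra series indexed by such a pair $(\mathbf L,\pi_m)$ with $r := \theta - m$ GL-factors, and computes the endomorphism algebra of $\mathrm R_{\mathbf L}^{\mathbf G}(\pi_m)$. Using Howlett–Lehrer (as mentioned in the introduction via \cite{howlett}), this endomorphism algebra is identified with an Iwahori--Hecke algebra of type $B_r$ with suitable unequal parameters depending on $d$. Its simple modules are in bijection with bipartitions of $r$, and one translates bipartitions into symbols of rank $\theta$ and defect $2d+1$ by the standard procedure (insert $0$'s, use $\beta$-sets, shift if necessary). Thus the unipotent representations in the given series are in natural bijection with the symbols in $\mathcal Y^1_{2d+1,\theta}$, and taking the disjoint union over $d$ yields the claimed bijection with $\mathcal Y^1_\theta$.

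The hardest step, and the one really carried by Lusztig in \cite{classical}, is the classification of cuspidal unipotents and in particular the non-existence statement for $m \neq d(d+1)$: one has to exclude, via explicit orthogonality computations for Deligne--Lusztig characters, the possibility of any ``extra'' cuspidal unipotent. The identification of the Hecke algebra parameters is also delicate, but is handled uniformly by the results of \cite{howlett}. Given these two inputs, the rest of the argument is a combinatorial matching between bipartitions and symbols, together with a counting check that the total number of symbols of rank $\theta$ agrees with the total number of unipotent representations of $\mathrm{Sp}(2\theta,\mathbb F_q)$, confirming the bijection is exhaustive.
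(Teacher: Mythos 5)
The paper gives no proof of this theorem: it simply cites Lusztig's classification (\cite{classical}, Theorem 8.2). Your sketch of the Harish-Chandra--theoretic strategy underlying that reference — reduce to cuspidal unipotents of Levis of the form $\mathrm{GL}(a_1)\times\cdots\times\mathrm{GL}(a_k)\times\mathrm{Sp}(2m)$, use Lusztig's existence/uniqueness result for cuspidal unipotents of $\mathrm{Sp}(2m)$ when $m=d(d+1)$, identify the endomorphism algebra via Howlett--Lehrer with a type $B$ Hecke algebra whose simple modules are indexed by bipartitions, and translate bipartitions into symbols of the corresponding defect — is exactly the standard argument behind the cited result and matches the way the paper itself organizes this material in Section 2 (Theorem \ref{CuspidalUnipotents}, Proposition \ref{UnipotentCuspidalSupport}, and the bijection between $\mathcal Y^1_{2\delta+1,\theta}$ and bipartitions of $\theta-\delta(\delta+1)$).
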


If $S \in \mathcal Y^1_{\theta}$ we write $\rho_S$ for the associated unipotent representation of $\mathrm{Sp}(2\theta,\mathbb F_q)$. The classification is done so that the symbols
\begin{align*}
\setlength\arraycolsep{2pt}
\begin{pmatrix}
\theta \\
{}
\end{pmatrix},
&&
\begin{pmatrix}
0 & \ldots & \theta - 1 & \theta \\
1 & \ldots & \theta & 
\end{pmatrix},
\end{align*}
correspond respectively to the trivial and to the Steinberg representations. Let $S = (X,Y)$ be a symbol and let $k\geq 1$. A \textbf{$k$-hook} $h$ in $S$ is an integer $z\geq k$ such that $z\in X,z-k \not \in X$ or $z\in Y,z-k \not \in Y$. A \textbf{$k$-cohook} $c$ in $S$ is an integer $z\geq k$ such that $z \in X, z-k \not \in Y$ or $z\in Y, z-k \not \in X$. The integer $k$ is referred to as the \textbf{length} of the hook $h$ or the cohook $c$, and it is denoted $\ell(h)$ or $\ell(c)$. The \textbf{hook formula} gives an expression of $\dim(\rho_S)$ in terms of hooks and cohooks. 

\begin{prop}\label{HookFormula}
We have 
$$\dim(\rho_S) = q^{a(S)} \frac{\prod_{i=1}^{\theta} \left(q^{2i}-1\right)}{2^{b'(S)}\prod_{h}\left(q^{\ell(h)}-1\right)\prod_{c}\left(q^{\ell(c)}+1\right)},$$
where the products in the denominator run over all the hooks $h$ and all the cohooks $c$ in $S$, and the numbers $a(S)$ and $b'(S)$ are given by
\begin{align*}
a(S) = \sum_{\{s,t\}\subset S} \min(s,t) - \sum_{i\geq 1} \binom{\#S-2i}{2}, & & b'(S) = \left\lfloor\frac{\#S-1}{2}\right\rfloor - \#\left(X\cap Y\right).
\end{align*}
\end{prop}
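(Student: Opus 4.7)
The statement is Lusztig's classical hook-length formula for the degrees of unipotent characters of $\mathrm{Sp}(2\theta,\mathbb F_q)$, and my plan is to deduce it from Lusztig's raw formula for generic degrees of unipotent representations of classical groups (as set up in \cite{classical} and rearranged in \cite{geck} Section 4.4). The strategy is to rewrite a ``two-variable'' product formula indexed by pairs of symbol entries into a product indexed by hooks and cohooks.

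First I would recall Lusztig's explicit expression: for $S=(X,Y)$ of rank $\theta$ and odd defect $d$, with $X=(x_1<\cdots<x_{r+d})$ and $Y=(y_1<\cdots<y_r)$, the dimension of $\rho_S$ is (up to a normalizing power of $q$ and the factor $2^{b'(S)}$ coming from the centralizer of the corresponding Lusztig datum) a ratio with $\prod_{i=1}^{\theta}(q^{2i}-1)$ in the numerator, ``in-row'' differences $q^{x_j}-q^{x_i}$ and $q^{y_j}-q^{y_i}$ in the denominator, and ``cross-row'' sums $q^{x_i}+q^{y_j}$ appearing as an additional factor. The presence of sums (rather than differences) for cross-row pairs reflects the fact that the symbol has defect $1$ mod $2$ for the symplectic group, and is precisely what will produce cohooks in the final formula.

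Second I would convert this raw expression to the hook/cohook form via the elementary identities
$$q^{x_j}-q^{x_i}=q^{x_i}(q^{x_j-x_i}-1),\qquad q^{x_i}+q^{y_j}=q^{\min(x_i,y_j)}(q^{|x_i-y_j|}+1).$$
By definition, every ordered in-row pair produces a $k$-hook of length $k=x_j-x_i$ (resp.\ $y_j-y_i$), and every cross-row pair produces a $k$-cohook of length $k=|x_i-y_j|$; conversely each hook or cohook arises from a unique such pair. Thus, after the substitutions above, the denominator becomes exactly $\prod_h(q^{\ell(h)}-1)\prod_c(q^{\ell(c)}+1)$, and a certain total power of $q$ factors out.

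Third I would verify that this leftover power of $q$ is precisely $q^{a(S)}$ with $a(S)=\sum_{\{s,t\}\subset S}\min(s,t)-\sum_{i\geq 1}\binom{\#S-2i}{2}$. The first sum collects the $\min(x_i,y_j)$ and $\min(x_i,x_j)=x_i$ contributions from each unordered pair of entries, while the binomial corrections subtract the standard normalizing power of $q$ in Lusztig's formula (which depends only on $\#S$ and is what makes the resulting expression invariant under the shift operation). The main obstacle is therefore purely bookkeeping: one must carefully check that every factor in Lusztig's formula is matched with exactly one hook or cohook, that the signs and the factor $2^{b'(S)}$ land correctly, and that the accumulated power of $q$ agrees with $a(S)$ independently of the chosen representative of $S$ modulo shifts. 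Once these matchings are verified the identity reduces to a direct computation, which is carried out in full in \cite{geck} Section 4.4.
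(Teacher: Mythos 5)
The paper does not prove this proposition: it is stated as a recalled fact, with the derivation deferred to \cite{geck} Section 4.4 (and ultimately to Lusztig's generic degree formulas). So the comparison here is really between your sketch and the standard derivation in the literature.

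Your outline is broadly in the right spirit (start from Lusztig's generic degree formula, then reorganize into a product over hooks and cohooks), but there is a genuine error in Step 2. You claim that ``every ordered in-row pair produces a $k$-hook of length $k=x_j-x_i$\ldots and conversely each hook or cohook arises from a unique such pair.'' This contradicts the definitions you are working with. A $k$-hook is an integer $z\geq k$ with $z\in X$ and $z-k\notin X$ (or the $Y$-version); in particular if $x_i<x_j$ are \emph{both} in $X$, then $(x_j,\,x_j-x_i)$ is \emph{not} a hook, since $x_j-(x_j-x_i)=x_i\in X$. Hooks are entry/hole pairs, not entry/entry pairs, and the same remark applies to cohooks. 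A quick sanity check makes this concrete: for the trivial symbol $S$ with $X=\{2\}$, $Y=\emptyset$, there are no pairs of entries at all, yet there are two hooks (of lengths $1$ and $2$) and two cohooks (of lengths $1$ and $2$), exactly what is needed for the displayed formula to give $\dim(\rho_S)=1$. So the claimed bijection fails even for the simplest symbol.

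The missing ingredient is the combinatorial identity (a $q$-analog of Frame--Robinson--Thrall, in the beta-set form due to Macdonald) that converts the Vandermonde-type factors $\prod_{i<j}(q^{x_j}-q^{x_i})$ together with the factorial-type denominators $\prod_{a}\prod_{k=1}^{a}(q^{k}-1)$ appearing in Lusztig's formula into the product $\prod_{h}(q^{\ell(h)}-1)$ over hooks, with a leftover power of $q$. An analogous identity, using the split $(q^{2k}-1)=(q^k-1)(q^k+1)$ and the cross-row factors $q^{x_i}+q^{y_j}$, produces $\prod_{c}(q^{\ell(c)}+1)$ over cohooks. The factoring identities you wrote down, such as $q^{x_j}-q^{x_i}=q^{x_i}(q^{x_j-x_i}-1)$, are the beginning of this manipulation, but the cancellation against the denominator factors and the resulting reorganization into hooks and cohooks is precisely the nontrivial combinatorial content of the proof; it cannot be replaced by a direct pair/hook bijection. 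Once that identity is in place, the bookkeeping of the residual power of $q$ and the factor $2^{b'(S)}$ proceeds as you describe, and the result indeed matches $a(S)$ and $b'(S)$ as in \cite{geck} 4.4.
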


For $\delta \geq 0$, we define the symbol 
$$S_{\delta} := 
\begin{pmatrix}
0 & \ldots & 2\delta \\
  &        & 
\end{pmatrix} \in \mathcal Y^1_{2\delta+1,\delta(\delta+1)}.$$

\begin{defi}
The \textbf{core} of a symbol $S \in \mathcal Y^1_{d,\theta}$ is defined by $\mathrm{core}(S) := S_{\delta}$ where $d = 2\delta + 1$. We say that $S$ is $\textbf{cuspidal}$ if $S = \mathrm{core}(S)$.
\end{defi}

\begin{rk}
In general, we have $\mathrm{rk}(\mathrm{core}(S)) \leq \mathrm{rk}(S)$ with equality if and only if $S$ is cuspidal. 
\end{rk}

The next theorem states that cuspidal unipotent representations correspond to cuspidal symbols.

\begin{theo}\label{CuspidalUnipotents}
The group $\mathrm{Sp}(2\theta,\mathbb F_q)$ admits a cuspidal unipotent representation if and only if $\theta = \delta(\delta+1)$ for some $\delta \geq 0$. When this is the case, the cuspidal unipotent representation is unique and given by $\rho_{S_{\delta}}$.
\end{theo}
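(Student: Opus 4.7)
The plan is to decompose the statement into two essentially independent pieces: a purely combinatorial determination of which symbols are cuspidal (and of what rank), and Lusztig's representation-theoretic identification of cuspidal unipotents with cuspidal symbols under the bijection of Theorem \ref{ClassificationUnip}.

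First I would unwind the definition of cuspidal symbol. By the definition in the paper, $\mathrm{core}(S)$ depends only on the defect $d = 2\delta+1$ of $S$ and equals $S_{\delta}$. Therefore $S$ is cuspidal if and only if $S = S_{\delta}$ for some $\delta \geq 0$. I would then compute $\mathrm{rk}(S_{\delta})$ directly from the rank formula. The symbol $S_{\delta}$ has a single nonempty row of entries $0,1,\ldots,2\delta$ and total length $\#S_{\delta} = 2\delta + 1$, so
$$\mathrm{rk}(S_{\delta}) = \sum_{i=0}^{2\delta} i - \left\lfloor\frac{(2\delta)^{2}}{4}\right\rfloor = \delta(2\delta+1) - \delta^{2} = \delta(\delta+1).$$
Consequently $\mathcal Y^{1}_{\theta}$ contains a cuspidal symbol if and only if $\theta$ has the form $\delta(\delta+1)$ for some $\delta \geq 0$, and when this happens $S_{\delta}$ is the unique such symbol.

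Second, I would invoke Lusztig's classification of cuspidal unipotent representations of finite classical groups: under the bijection of Theorem \ref{ClassificationUnip}, the unipotent representation $\rho_{S}$ of $\mathrm{Sp}(2\theta,\mathbb F_{q})$ is cuspidal in the representation-theoretic sense if and only if $S$ is a cuspidal symbol in the combinatorial sense recalled above. This is the nontrivial input, established in \cite{classical} and recorded in \cite{geck} Section 4.4. Combining it with the first step yields both the existence criterion $\theta = \delta(\delta+1)$ and the uniqueness of the cuspidal unipotent as $\rho_{S_{\delta}}$.

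The main obstacle is not anything internal to be proven from scratch but precisely this invocation of Lusztig's equivalence between the two notions of cuspidality; once that classical result is quoted, the rest of the theorem reduces to the elementary computation $\mathrm{rk}(S_{\delta}) = \delta(\delta+1)$ above and the observation that $S_{\delta}$ is the unique cuspidal symbol of any given admissible rank.
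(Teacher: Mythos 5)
Your decomposition is correct and matches what the paper implicitly does, since Theorem \ref{CuspidalUnipotents} is stated there as a recalled classical result (citing \cite{classical} and \cite{geck}) rather than re-proved. The rank computation $\mathrm{rk}(S_{\delta}) = \delta(2\delta+1) - \delta^2 = \delta(\delta+1)$ is right, uniqueness follows from strict monotonicity of $\delta\mapsto\delta(\delta+1)$, and you correctly isolate Lusztig's identification of representation-theoretic cuspidality with combinatorial cuspidality of the symbol as the only nontrivial import.
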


The determination of the cuspidal unipotent representations leads to a description of the unipotent Harish-Chandra series. 

\begin{defi}
Let $\delta \geq 0$ such that $\theta = \delta(\delta+1) + a$ for some $a\geq 0$. We write 
$$L_{\delta} \simeq \mathrm{GL}(1,\mathbb F_q)^a \times \mathrm{Sp}(2\delta(\delta+1),\mathbb F_q)$$ 
for the block-diagonal Levi complement in $\mathrm{Sp}(2\theta,\mathbb F_q)$, with one middle block of size $2\delta(\delta+1)$ and other blocks of size $1$. We write $\rho_{\delta} := (\mathbf{1})^a \boxtimes \rho_{S_{\delta}}$, which is a cuspidal representation of $L_{\delta}$.
\end{defi}

\begin{prop}\label{UnipotentCuspidalSupport} 
Let $S\in \mathcal Y^{1}_{d,\theta}$. The cuspidal support of $\rho_S$ is $(L_{\delta},\rho_{\delta})$ where $d = 2\delta+1$.
\end{prop}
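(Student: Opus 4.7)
The plan is to identify the cuspidal support by combining Theorem \ref{CuspidalUnipotents} with Howlett--Lehrer theory for $\mathrm{Sp}(2\theta,\mathbb F_q)$.

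First, I would enumerate the possible cuspidal supports. Any Levi subgroup of $\mathrm{Sp}(2\theta)$ is isomorphic to $\mathrm{GL}(n_1,\mathbb F_q)\times\cdots\times\mathrm{GL}(n_r,\mathbb F_q)\times\mathrm{Sp}(2m,\mathbb F_q)$ for some $n_1+\cdots+n_r+m = \theta$, and a cuspidal unipotent representation of such a Levi is an external tensor product of cuspidal unipotents of each factor. Since $\mathrm{GL}(n,\mathbb F_q)$ carries a cuspidal unipotent only for $n=1$ (namely the trivial character), and by Theorem \ref{CuspidalUnipotents} the symplectic factor $\mathrm{Sp}(2m,\mathbb F_q)$ carries one only when $m = \delta'(\delta'+1)$ for some $\delta'\geq 0$, the possible cuspidal supports are precisely the pairs $(L_{\delta'},\rho_{\delta'})$ with $\delta'(\delta'+1)\leq\theta$. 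Thus the cuspidal support of $\rho_S$ is $(L_{\delta'},\rho_{\delta'})$ for some $\delta'$, and the task reduces to identifying $\delta'$ in terms of $S$.

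Second, I would pin down $\delta'$ by computing the Harish-Chandra series of $(L_{\delta'},\rho_{\delta'})$ and matching it with the symbols of a fixed defect. The relative Weyl group $W(L_{\delta'},\rho_{\delta'})$ is of type $B_a$ with $a = \theta - \delta'(\delta'+1)$; indeed $N_{\mathrm{Sp}(2\theta)}(L_{\delta'})/L_{\delta'}\simeq W(B_a)$ acts by permuting the $\mathrm{GL}(1)$ factors and by swapping each one with its dual in the ambient symplectic structure, and this whole group fixes $\rho_{\delta'}$ because $\mathbf 1$ is symmetric. The Howlett--Lehrer comparison theorem then provides a canonical bijection between the irreducible constituents of $\mathrm R_{L_{\delta'}}^{\mathrm{Sp}(2\theta)}(\rho_{\delta'})$ and the irreducible characters of $W(B_a)$, the latter being parameterized by bipartitions of $a$.

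Third, there is a classical bijection between bipartitions of $a$ and symbols of rank $\theta$ with core $S_{\delta'}$ (equivalently defect $2\delta'+1$), realized via the beta-number encoding of Young diagrams. The deep ingredient of the proof, and the main obstacle, is Lusztig's result that this bijection matches the Howlett--Lehrer parametrization on one side with his symbol parametrization of unipotent representations on the other; I would import this from \cite{classical} (reviewed in \cite{geck} Section 4.4) rather than reprove it. Granting this compatibility, $\rho_S$ lies in the Harish-Chandra series of $(L_{\delta'},\rho_{\delta'})$ if and only if $\mathrm{core}(S) = S_{\delta'}$. Since $S \in \mathcal Y^1_{d,\theta}$ has defect $d = 2\delta+1$, we have $\mathrm{core}(S) = S_\delta$ and thus $\delta' = \delta$, giving the claimed cuspidal support $(L_\delta,\rho_\delta)$.
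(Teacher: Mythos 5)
Your sketch is correct, and it aligns with how the paper handles this result: Proposition \ref{UnipotentCuspidalSupport} is stated without a proof in the text, as a known consequence of Lusztig's classification of unipotent representations in \cite{classical} (reviewed in \cite{geck} Section 4.4), which is precisely the ingredient you identify as the one to be imported. Your preliminary reduction — enumerating unipotent cuspidal supports of Levi subgroups of $\mathrm{Sp}(2\theta,\mathbb F_q)$ via Theorem \ref{CuspidalUnipotents}, computing the relative Weyl group as type $B_a$ with $a = \theta - \delta'(\delta'+1)$, and then invoking Howlett--Lehrer together with the defect-$2\delta'+1$ symbol combinatorics — is the standard route and fills in the background that the paper leaves implicit.
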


In particular, the defect of the symbol $S$ of rank $\theta$ classifies the unipotent Harish-Chandra series of $\mathrm{Sp}(2\theta,\mathbb F_p)$. If $\delta \geq 0$ is such that $\delta(\delta+1) \leq \theta$, we write $\mathcal E_{\delta}$ for the Harish-Chandra series determined by $(L_{\delta},\rho_{\delta})$. The previous proposition says that $\mathcal E_{\delta}$ is in bijection with $\mathcal Y^1_{2\delta+1,\theta}$. Representations in a given series $\mathcal E_{\delta}$ can also be labelled in an alternative way. 

\begin{defi}
A \textbf{partition} of an integer $n\geq 0$ is a sequence of positive integers $\lambda = (\lambda_1 \geq \ldots \geq \lambda_r)$ with $r\geq 0$ such that $n = \lambda_1 + \ldots + \lambda_r$. The integer $|\lambda| := n$ is called the length of $\lambda$. A \textbf{bipartition} of $n$ is a pair $(\lambda,\mu)$ of partitions such that $|\lambda| + |\mu| = n$. 
\end{defi}

Let $S \in \mathcal Y^1_{2\delta+1,\theta}$ where $\delta(\delta+1) \leq \theta$. Up to taking suitable shifts, we may consider representatives of $S$ and of $S_{\delta}$ whose rows have the same length. We obtain a bipartition $(\alpha,\beta)$ of $\theta - \delta(\delta+1)$ by subtracting component-wise the rows of $S_{\delta}$ from the rows of $S$, and reordering them decreasingly while ignoring the potential $0$ components. 

\begin{ex}
Recall the $6$ symbols of $\mathcal Y^1_2$ described in Exemple \ref{ExempleSymbols}. The associated bipartitions are respectively
\begin{align*}
((2),\emptyset), & & (\emptyset,(2)), & & ((1),(1)), & & ((1^2),\emptyset), & & (\emptyset,(1^2)) & & (\emptyset,\emptyset).
\end{align*}
The five symbols of defect $1$ correspond to bipartitions of $2$, and the last symbol of defect $3$ corresponds to the empty bipartition of $0$.
\end{ex}

\begin{prop}
The process described above defines a natural bijection between $\mathcal Y^1_{2\delta+1,\theta}$ and the set of bipartitions of $\theta - \delta(\delta+1)$.
\end{prop}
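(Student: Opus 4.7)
The plan is to verify that the construction is well-defined up to the shift equivalence and then to exhibit an explicit inverse map from bipartitions back to symbols.

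First, I would check well-definedness. Any representative of $S\in \mathcal Y^1_{2\delta+1,\theta}$ has rows $(x_1,\ldots,x_{r+2\delta+1})$ and $(y_1,\ldots,y_r)$ for some $r\geq 0$, and the corresponding representative of the core $S_{\delta}$ with rows of matching length is $(0,1,\ldots,r+2\delta)$ and $(0,1,\ldots,r-1)$, obtained by shifting $S_{\delta}$ exactly $r$ times. The componentwise differences $\alpha' = (x_i - (i-1))_i$ and $\beta' = (y_j - (j-1))_j$ are weakly increasing since $x_{i+1}-x_i \geq 1$ and $y_{j+1}-y_j \geq 1$. Applying a further shift to $S$ adds a leading $0$ to each of $\alpha'$ and $\beta'$, hence does not affect the bipartition $(\alpha,\beta)$ obtained after discarding zero entries and reordering decreasingly.

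Next I would construct the inverse. Given a bipartition $(\alpha,\beta)$ of $n := \theta - \delta(\delta+1)$ with $\alpha = (\alpha_1\geq\cdots\geq\alpha_s)$ and $\beta = (\beta_1\geq\cdots\geq\beta_t)$, pick any $r \geq \max(s-2\delta-1,\,t,\,0)$, pad $\alpha$ to length $r+2\delta+1$ and $\beta$ to length $r$ by prepending zeros, reverse the orderings to obtain weakly increasing sequences $\tilde\alpha$ and $\tilde\beta$, and set
\[
x_i := \tilde\alpha_i + (i-1), \qquad y_j := \tilde\beta_j + (j-1).
\]
Then $x_{i+1}-x_i = (\tilde\alpha_{i+1}-\tilde\alpha_i) + 1 \geq 1$ and similarly for $y$, so $(X,Y)$ is a valid symbol of defect $2\delta+1$. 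Different choices of $r$ produce representatives of the same symbol modulo the shift. By construction, this procedure is a one-sided inverse to the map of the proposition, and feeding a symbol through the inverse evidently reproduces it up to shift, so the two maps are mutually inverse.

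The only nontrivial check is that the resulting symbol has rank exactly $\theta$. For this I would compute
\[
\sum_{i=0}^{r+2\delta} i + \sum_{j=0}^{r-1} j = (r+\delta)^2 + \delta(\delta+1),
\]
so that
\[
\sum_i x_i + \sum_j y_j = (|\alpha|+|\beta|) + (r+\delta)^2 + \delta(\delta+1) = n + (r+\delta)^2 + \delta(\delta+1).
\]
Since $\#(X,Y)-1 = 2(r+\delta)$, the rank formula gives $\mathrm{rk}(S) = n + \delta(\delta+1) = \theta$. This computation is the one concrete obstacle in the argument, and it is also what forces the shift $n = \theta - \delta(\delta+1)$ to appear in the statement; once it is in place, the bijectivity follows immediately from the inverse construction above.
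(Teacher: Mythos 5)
Your proof is correct and complete. The paper does not supply a proof of this proposition at all: it is quoted as a standard combinatorial fact underlying Lusztig's classification (citing \cite{classical} and \cite{geck}), so there is nothing in the paper to compare your argument against. What you have done is reconstruct the routine verification that the literature leaves implicit, and it is the natural one: check that the componentwise subtraction is invariant under the shift, exhibit the inverse map by padding a bipartition with zeros and adding a staircase, and confirm that the rank comes out to $\theta$ by the identity $\sum_{i=0}^{r+2\delta} i + \sum_{j=0}^{r-1} j = (r+\delta)^2 + \delta(\delta+1)$ combined with $\lfloor (\#S-1)^2/4\rfloor = (r+\delta)^2$. All three steps are carried out correctly, and the lower bound $r \geq \max(s-2\delta-1,\,t,\,0)$ is exactly the right condition for the padding to make sense. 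One could make the final sentence slightly more explicit by separately stating that the composite bipartition $\to$ symbol $\to$ bipartition is the identity on the nose and that symbol $\to$ bipartition $\to$ symbol is the identity up to shift, but both checks follow immediately from the formulas $x_i = \tilde\alpha_i + (i-1)$, $y_j = \tilde\beta_j + (j-1)$ as you indicate.
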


It is well-known that the set $\mathrm{Irr}(W_n)$ of equivalence classes of irreducible representations of the Coxeter group $W_n$ are in bijection with the set of bipartitions of $n$, cf. \cite{geck2} Section 5.5. Through this bijection, the bipartitions $((n),\emptyset)$ and $(\emptyset, (1^n))$ correspond respectively to the trivial and to the signature characters of $W_n$. 

\begin{corol}
Let $\theta, \delta\geq 0$ such that $\delta(\delta+1)\leq \theta$. There is a natural bijection between $\mathcal E_{\delta}$ and $\mathrm{Irr}(W_{\theta-\delta(\delta+1)})$.
\end{corol}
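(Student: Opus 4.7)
The statement is a direct consequence of three bijections that have already been assembled in this section, so the plan is simply to compose them and check they fit together coherently.

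First, Proposition \ref{UnipotentCuspidalSupport} says that the map $\rho_S \mapsto S$ identifies the Harish-Chandra series $\mathcal E_\delta$ with the set of symbols of rank $\theta$ and defect $2\delta+1$; namely $\mathcal E_\delta \longleftrightarrow \mathcal Y^1_{2\delta+1,\theta}$. Second, the proposition immediately preceding the corollary gives the bijection $\mathcal Y^1_{2\delta+1,\theta} \longleftrightarrow \{\text{bipartitions of } \theta-\delta(\delta+1)\}$, obtained by choosing shift representatives of $S$ and of the core $S_\delta$ whose rows have equal length, subtracting component-wise, and reordering the resulting entries decreasingly (ignoring zeros). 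Third, the classical result recalled from \cite{geck2} Section 5.5 gives the bijection $\{\text{bipartitions of } n\} \longleftrightarrow \mathrm{Irr}(W_n)$ for $n = \theta-\delta(\delta+1)$. Composing the three yields the asserted bijection.

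The only point requiring comment is the word \emph{natural}. On the representation-theoretic side, the natural parametrization of $\mathcal E_\delta$ is furnished by Howlett–Lehrer theory: the endomorphism algebra of $\mathrm R_{L_\delta}^G \rho_\delta$ is isomorphic to the group algebra of the relative Weyl group $W_G(L_\delta,\rho_\delta) = N_G(L_\delta,\rho_\delta)/L_\delta$, and this yields a canonical bijection between $\mathcal E_\delta$ and $\mathrm{Irr}(W_G(L_\delta,\rho_\delta))$. In our setting, the Levi $L_\delta \simeq \mathrm{GL}(1)^{\theta-\delta(\delta+1)} \times \mathrm{Sp}(2\delta(\delta+1))$ sits inside $\mathrm{Sp}(2\theta,\mathbb F_q)$ in the standard block-diagonal way, and the relative Weyl group is canonically identified with $W_{\theta-\delta(\delta+1)}$ (of type $B$) because permutations and sign changes of the $\mathrm{GL}(1)$ blocks preserve the cuspidal pair. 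So there is an intrinsic bijection $\mathcal E_\delta \longleftrightarrow \mathrm{Irr}(W_{\theta-\delta(\delta+1)})$ independent of combinatorics.

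The main thing to confirm is therefore that the combinatorial chain above coincides with the Howlett–Lehrer bijection. This is a matter of matching endpoints: one checks that, under the combinatorial chain, the bipartition $((n),\emptyset)$ (the trivial character of $W_n$) maps to the symbol whose second row has only zero excess over the core (corresponding to the Harish-Chandra induction $\mathrm R_{L_\delta}^G \rho_\delta$ constituent that is the natural analogue of the Steinberg/trivial composition), while $(\emptyset,(1^n))$ (the sign character) maps to the opposite extreme; this compatibility is the one encoded in \cite{geck} Section 4.4, which is our main reference for the symbol parametrization. The hard part is thus not the proof itself but verifying that one's conventions for symbols, for the bijection with bipartitions, and for the bijection between bipartitions and $\mathrm{Irr}(W_n)$ are all normalized consistently; once this is done, the composition is formal.
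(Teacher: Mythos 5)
Your proof is correct and matches the paper's (implicit) argument: the corollary follows by composing the bijection $\mathcal E_{\delta} \leftrightarrow \mathcal Y^1_{2\delta+1,\theta}$ from Proposition \ref{UnipotentCuspidalSupport}, the bijection with bipartitions from the preceding proposition, and the classical bijection between bipartitions of $n$ and $\mathrm{Irr}(W_n)$. Your additional discussion of the Howlett--Lehrer normalization is a sensible justification of the word \emph{natural}, but it goes beyond what the paper spells out; the paper treats the composition as immediate.
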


One may check that the trivial (resp. the sign) character of $W_{\theta}$ corresponds to the trivial (resp. the Steinberg) representation of $\mathrm{Sp}(2\theta,\mathbb F_q)$ in $\mathcal E_0$. Given a symbol $S \in \mathcal Y^1_{2\delta+1,\theta}$ and the corresponding bipartition $(\alpha,\beta)$ of $\theta - \delta(\delta+1)$, we will sometimes also write $\rho_{\delta,\alpha,\beta}$ instead of $\rho_S$. It turns out that the labelling of the unipotent representations of $\mathrm{Sp}(2\theta,\mathbb F_q)$ in terms of bipartitions is particularly well suited in order to compute Harish-Chandra inductions and restrictions. To this end, it is also convenient to identify partitions with their Young diagrams.

\begin{defi}
A \textbf{Young diagram} $T$ is a finite collection of boxes which are organized in rows of non-increasing lengths, justified on the left side. The size $|T|$ of a Young diagram is the number of boxes it contains. If $T$ and $T'$ are two Young diagrams such that $|T| = |T'|+1$, we say that $T$ is obtained from $T'$ by adding one box, or that $T'$ is obtained from $T$ by removing one box, if we can overlay $T'$ on the top of $T$ so that the difference consists of just one box. 
\end{defi}

\begin{ex}
Let us consider the following Young diagram of size $4$
\begin{center}
\ydiagram{3,1}\;.
\end{center}
The Young diagrams which can be obtained by adding a box are given by
\begin{center}
\ydiagram{4,1}\;, \quad \quad \ydiagram{3,2}\;, \quad \quad \ydiagram{3,1,1}\;.
\end{center}
\end{ex}

Clearly the set of partitions of $n$ is in bijection with the set of Young diagrams of size $n$. Likewise, bipartitions of $n$ correspond to pairs of Young diagrams of combined sizes $n$. In the following, we will have to compute inductions of the following form
$$\mathrm R_{\mathrm{GL}(a,\mathbb F_q) \times \mathrm{Sp}(2\theta',\mathbb F_q)}^{\mathrm{Sp}(2\theta,\mathbb F_q)} \, \mathbf 1 \boxtimes \rho_{S'},$$
where $\theta = a + \theta'$ and $S' \in \mathcal Y^{1}_{d,\theta'}$ is a symbol. In particular we assume that $\theta' \geq \delta(\delta+1)$ where $d = 2\delta+1$. 

\begin{theo}\label{ComputeInduction}
Let $S' \in \mathcal Y^1_{d,\theta'}$ and $(\alpha',\beta')$ the associated bipartition of $\theta' - \delta(\delta+1)$. We have 
$$\mathrm R_{\mathrm{GL}(a,\mathbb F_q) \times \mathrm{Sp}(2\theta',\mathbb F_q)}^{\mathrm{Sp}(2\theta,\mathbb F_q)} \, \mathbf 1 \boxtimes \rho_{S'} = \sum_{\alpha,\beta} \rho_{\delta,\alpha,\beta},$$
where $(\alpha,\beta)$ runs over all the bipartitions of $\theta - \delta(\delta+1)$ such that for some $0 \leq d \leq a$, the Young diagram of $\alpha$ (resp. of $\beta$) can be obtained from the Young diagram of $\alpha'$ (resp. of $\beta'$) by adding a succession of $d$ boxes (resp. $a - d$ boxes), no two of them lying in the same column.
\end{theo}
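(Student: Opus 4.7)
The plan is to reduce the computation of this Harish-Chandra induction to an induction of representations between Weyl groups of type $B$, via the Howlett-Lehrer comparison theorem of \cite{howlett}, and then apply the Pieri rule in type $B$.

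First, I would identify the Harish-Chandra series containing both sides. By Proposition \ref{UnipotentCuspidalSupport}, the cuspidal support of $\rho_{S'}$ in $\mathrm{Sp}(2\theta',\mathbb{F}_q)$ is $(L_\delta^{\mathrm{Sp}(2\theta')},\rho_\delta)$. The trivial representation of $\mathrm{GL}(a,\mathbb{F}_q)$ has cuspidal support given by the trivial character of the diagonal torus, corresponding under the type-$A$ Howlett-Lehrer bijection to the trivial representation of $\mathfrak{S}_a$ (i.e.\ the partition $(a)$). Therefore $\mathbf{1}\boxtimes \rho_{S'}$ is a constituent of a Harish-Chandra induction from the Levi $T_{\mathrm{GL}(a)}\times L_\delta^{\mathrm{Sp}(2\theta')}$ of $\mathrm{GL}(a)\times \mathrm{Sp}(2\theta')$; composing with $\mathrm{R}^G_{\mathrm{GL}(a)\times\mathrm{Sp}(2\theta')}$ and using transitivity of Harish-Chandra induction, the induced representation lies entirely in the Harish-Chandra series $\mathcal{E}_\delta$ of $\mathrm{Sp}(2\theta,\mathbb{F}_q)$. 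Hence only symbols in $\mathcal{Y}^1_{2\delta+1,\theta}$ can appear, and the answer may indeed be expressed in terms of bipartitions $(\alpha,\beta)$ of $\theta-\delta(\delta+1)$.

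Next, I would invoke Howlett-Lehrer: the comparison theorem identifies Harish-Chandra induction inside a fixed series with parabolic induction of representations of the associated relative Weyl groups. For the symplectic group, the relative Weyl group of $(L_\delta,\rho_\delta)$ is $W_{\theta-\delta(\delta+1)}$, of type $B$, and the embedding $\mathrm{GL}(a)\times \mathrm{Sp}(2\theta')\hookrightarrow \mathrm{Sp}(2\theta)$ corresponds, on relative Weyl groups, to the standard parabolic embedding $\mathfrak{S}_a\times W_{\theta'-\delta(\delta+1)}\hookrightarrow W_{\theta-\delta(\delta+1)}$. Under the bijections recalled in this section, $\mathbf{1}_{\mathrm{GL}(a)}$ corresponds to the trivial representation of $\mathfrak{S}_a$ (the partition $(a)$), and $\rho_{S'}$ corresponds to the bipartition $(\alpha',\beta')$. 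The problem is thereby reduced to computing
$$\mathrm{Ind}^{W_{\theta-\delta(\delta+1)}}_{\mathfrak{S}_a \times W_{\theta'-\delta(\delta+1)}}\bigl(\mathbf{1}_{\mathfrak{S}_a}\boxtimes \rho_{(\alpha',\beta')}\bigr),$$
and expressing the result in the bipartition labelling.

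Finally, I would apply the Pieri rule in type $B$. Since $W_n^B \simeq \mathfrak{S}_n\ltimes (\mathbb{Z}/2)^n$ and irreducible representations labelled by $(\alpha,\beta)$ are constructed by induction from $W_{|\alpha|}^B\times W_{|\beta|}^B$ with the sign character twisting the $\beta$-factor, the induction from a type-$A$ parabolic $\mathfrak{S}_a\times W_m^B$ splits by the size of the piece of $a$ that lands on each side: for each $0\leq d\leq a$, the contribution corresponds to the classical type-$A$ Pieri rule applied to $\alpha'$ (adding a horizontal strip of size $d$) and to $\beta'$ (adding a horizontal strip of size $a-d$). Summing over $d$ gives exactly the sum stated in the theorem, where ``no two boxes in the same column'' is precisely the horizontal-strip condition. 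The main obstacle is being precise about the Howlett-Lehrer bijection in the second step: one has to check that trivial representations of general linear factors really match trivial representations of symmetric-group factors under the identification of relative Weyl groups, and that the embedding of Levi subgroups matches the standard embedding of parabolic subgroups of $W^B$. Once this dictionary is in place, the combinatorial Pieri rule closes the argument.
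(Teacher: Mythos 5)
Your proposal is correct and follows essentially the same route as the paper: reduce via the Howlett-Lehrer comparison theorem to parabolic induction in the relative Weyl group $W_{\theta-\delta(\delta+1)}$ of type $B$, then apply the type-$B$ Pieri rule (cf.\ \cite{geck2}~6.1.9), with the horizontal-strip condition giving the ``no two boxes in the same column'' constraint. The only difference is that you spell out the cuspidal-support bookkeeping and the Weyl-group dictionary in more detail than the paper, which merely cites these two ingredients.
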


This computation is a consequence of the Howlett-Lehrer comparison theorem \cite{howlett}, stating that induction in a finite group of Lie type can be computed inside a corresponding Weyl group. We then apply Pieri's rule for Coxeter groups of type $B_n$, see \cite{geck2} 6.1.9. We will see concrete exemples of such computations in the following sections. There is a similar rule in order to compute certain Harish-Chandra restrictions. We write ${}^*\mathrm R_{\mathrm{Sp}(2\theta',\mathbb F_q)}^{\mathrm{Sp}(2\theta,\mathbb F_q)}$ for the restriction to the symplectic part of the Harish-Chandra restriction functor from $\mathrm{Sp}(2\theta,\mathbb F_q)$ to the Levi complement $\mathrm{GL}(a,\mathbb F_q) \times \mathrm{Sp}(2\theta',\mathbb F_q)$. 

\begin{theo}\label{ComputeRestriction}
Let $S \in \mathcal Y^1_{d,\theta}$ and $(\alpha,\beta)$ the associated bipartition of $\theta - \delta(\delta+1)$. We have 
$${}^*\mathrm R_{\mathrm{Sp}(2\theta',\mathbb F_q)}^{\mathrm{Sp}(2\theta,\mathbb F_q)} \, \rho_{S} = \sum_{\alpha',\beta'} \rho_{\delta,\alpha',\beta'},$$
where $(\alpha',\beta')$ runs over all the bipartitions of $\theta' - \delta(\delta+1)$ such that for some $0 \leq d \leq a$, the Young diagram of $\alpha'$ (resp. of $\beta'$) can be obtained from the Young diagram of $\alpha$ (resp. of $\beta$) by removing a succession of $d$ boxes (resp. $a - d$ boxes), no two of them lying in the same column.
\end{theo}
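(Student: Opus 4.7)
The plan is to deduce this restriction formula from the induction formula of Theorem \ref{ComputeInduction} via Frobenius reciprocity (the adjunction between Harish-Chandra induction and restriction). Set $G := \mathrm{Sp}(2\theta,\mathbb F_q)$ and $L := \mathrm{GL}(a,\mathbb F_q)\times \mathrm{Sp}(2\theta',\mathbb F_q)$. The functor ${}^*\mathrm R_{\mathrm{Sp}(2\theta',\mathbb F_q)}^{\mathrm{Sp}(2\theta,\mathbb F_q)}$ appearing in the statement is, by definition, the composition of the genuine Harish-Chandra restriction ${}^*\mathrm R_L^G$ with the projection sending an $L$-representation to its $\mathbf 1$-isotypic component for the $\mathrm{GL}(a,\mathbb F_q)$ factor, viewed as a representation of $\mathrm{Sp}(2\theta',\mathbb F_q)$. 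Hence the multiplicity of $\rho_{S'}$ in ${}^*\mathrm R_{\mathrm{Sp}(2\theta',\mathbb F_q)}^{\mathrm{Sp}(2\theta,\mathbb F_q)}\rho_S$ coincides with the multiplicity of $\mathbf 1 \boxtimes \rho_{S'}$ inside ${}^*\mathrm R_L^G\rho_S$.

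By Frobenius reciprocity, this multiplicity equals
$$\bigl\langle \rho_S,\, \mathrm R_L^G(\mathbf 1\boxtimes \rho_{S'})\bigr\rangle_G.$$
Theorem \ref{ComputeInduction} applies directly to the right-hand side and expresses it as the number of valid box-addition procedures from the bipartition $(\alpha',\beta')$ of $\theta' - \delta(\delta+1)$ to the bipartition $(\alpha,\beta)$ of $\theta - \delta(\delta+1)$. In particular, the multiplicity is automatically zero unless $S'$ has the same defect $2\delta+1$ as $S$, so the restriction remains inside the Harish-Chandra series $\mathcal E_{\delta}$, in accordance with the general principle that cuspidal support is preserved by Harish-Chandra functors.

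It remains to identify these addition procedures with the removal procedures featured in the statement. This is immediate from the combinatorial symmetry: a sequence of $d$ boxes added to the Young diagram of $\alpha'$ with no two lying in the same column, so as to produce $\alpha$, is the same data as the reverse sequence of $d$ boxes removed from $\alpha$, again with no two in the same column, so as to recover $\alpha'$. The analogous statement holds for the $\beta$-diagrams and the $a-d$ remaining boxes. Summing over all symbols $S'$ therefore produces the claimed decomposition.

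I do not foresee any serious obstacle: the argument is a formal adjunction combined with the transparent reversal of the box-addition combinatorics. The only subtlety is the convention defining the symplectic-part restriction functor, which we pin down in the first paragraph; once this is fixed, Frobenius reciprocity applies verbatim and the result follows from Theorem \ref{ComputeInduction} and the Howlett-Lehrer comparison that underlies it.
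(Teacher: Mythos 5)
The paper itself gives no proof of this theorem: after stating Theorem~\ref{ComputeInduction} it simply remarks that ``there is a similar rule'' for restrictions, both being treated as consequences of the Howlett--Lehrer comparison theorem together with Pieri's rule for type $B_n$. Your derivation via Frobenius reciprocity is correct and fills this in cleanly: since Harish--Chandra induction and restriction are biadjoint, $\langle {}^*\mathrm R_L^G\rho_S,\,\mathbf 1\boxtimes\rho_{S'}\rangle_L=\langle\rho_S,\,\mathrm R_L^G(\mathbf 1\boxtimes\rho_{S'})\rangle_G$, and the multiplicity on the right is $0$ or $1$ by Theorem~\ref{ComputeInduction} (the shift $d$ is forced to equal $|\alpha|-|\alpha'|$, so for each $(\alpha',\beta')$ there is at most one valid addition procedure, whose reversal is the removal procedure in the statement). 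You are also right to flag the convention on ${}^*\mathrm R_{\mathrm{Sp}(2\theta')}^{\mathrm{Sp}(2\theta)}$: it must be the $\mathbf 1_{\mathrm{GL}(a)}$-isotypic component of ${}^*\mathrm R_L^G$, not the naive group restriction to the $\mathrm{Sp}$ factor nor a further Harish--Chandra restriction through a torus of $\mathrm{GL}(a)$ — e.g.\ for $\rho_S=\mathrm{St}_{\mathrm{Sp}(4)}$ and $a=2$ these alternatives give nonzero answers while the stated formula (correctly) gives $0$. In short, this is essentially the same approach the paper is implicitly relying on, just made explicit.
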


\section{The cohomology of the Coxeter variety for the symplectic group}

\label{KnownResultsCoxeter}In this section we compute the cohomology of Coxeter varieties of finite symplectic groups, in terms of the classification of the unipotent characters that we recalled in Theorem \ref{ClassificationUnip}.

\begin{notation}
We write $X^{\theta} := X_{\emptyset}(\mathrm{cox})$ for the Coxeter variety attached to the symplectic group $\mathrm{Sp}(2\theta,\mathbb F_q)$, and $\mathrm{H}_c^{\bullet}(X^{\theta})$ instead of $\mathrm{H}_c^{\bullet}(X^{\theta}\otimes \mathbb F,\overline{\mathbb Q_{\ell}})$ where $\ell \not = p$.
\end{notation}

We first recall known facts on the cohomology of $X^{\theta}$ from Lusztig's work in \cite{cox}.

\begin{theo}
The following statements hold.
\begin{enumerate}[label=\upshape (\arabic*), topsep = 0pt]
\item The variety $X^{\theta}$ has dimension $\theta$ and is affine. The cohomology group $\mathrm{H}^i_c(X^{\theta})$ is zero unless $\theta\leq i \leq 2\theta$.
\item The Frobenius $F$ acts in a semisimple manner on the cohomology of $X^{\theta}$. 
\item The groups $\mathrm{H}^{2\theta-1}_c(X^{\theta})$ and $\mathrm{H}^{2\theta}_c(X^{\theta})$ are irreducible as $\mathrm{Sp}(2\theta,\mathbb F_q)$-representations, and the latter is the trivial representation. The Frobenius $F$ acts with eigenvalues respectively $q^{\theta-1}$ and $q^{\theta}$.
\item The group $\mathrm H^{\theta+i}_c(X^{\theta})$ for $0 \leq i \leq \theta-2$ is the direct sum of two eigenspaces of $F$, for the eigenvalues $q^{i}$ and $-q^{i+1}$. Each eigenspace is an irreducible unipotent representation of $\mathrm{Sp}(2\theta,\mathbb F_q)$.
\item The sum $\bigoplus_{i\geq 0} \mathrm H_c^i(X^{\theta})$ is multiplicity-free as a representation of $\mathrm{Sp}(2\theta,\mathbb F_q)$.
\end{enumerate}
\end{theo}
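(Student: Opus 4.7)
The plan is to derive all five statements by invoking Lusztig's foundational results on the cohomology of Coxeter Deligne-Lusztig varieties from \cite{cox}, and specializing the combinatorics to the symplectic case. Since all the items are essentially direct consequences of Lusztig's theorems, the work mostly consists in identifying which general result yields each claim, and tracking how the symplectic Coxeter number $h = 2\theta$ enters the picture.

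For item (1), the dimension equality $\dim X^{\theta} = \ell(\mathrm{cox}) = \theta$ is immediate from the general dimension formula for $X_I(w)$ recalled in Section \ref{Section1}. The affineness of $X^{\theta}$ is a theorem of Lusztig valid for any Coxeter variety. Combining the two with Artin vanishing on an affine variety (for the lower bound) and the general dimension bound on $\ell$-adic cohomology (for the upper bound) yields the vanishing range $[\theta, 2\theta]$ for $\mathrm H^i_c(X^{\theta})$. Item (2), the semisimplicity of $F$, is again Lusztig's theorem for Coxeter varieties, which ultimately follows from the fact that eigenvalues in distinct cohomological degrees turn out to lie in pairwise different Galois orbits, so that no Jordan block can mix them.

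For items (3), (4), and (5), I would invoke Lusztig's explicit description of $\mathrm H^*_c(X^{\theta})$ as a direct sum indexed by characters of the cyclic subgroup $\langle \mathrm{cox}\rangle \subset W_{\theta}$. Each such character contributes exactly one irreducible unipotent representation, concentrated in a single cohomological degree and carrying a prescribed Frobenius eigenvalue of the form $\zeta^k q^i$, where $\zeta$ is a primitive $h$-th root of unity. For $\mathrm{Sp}(2\theta,\mathbb F_q)$ we have $h = 2\theta$, hence $\zeta^{\theta} = -1$, and this accounts for the signs appearing in the eigenvalues $q^i$ and $-q^{i+1}$ of item (4). The $2\theta$ pairwise distinct characters of $\langle \mathrm{cox}\rangle$ produce $2\theta$ pairwise non-isomorphic irreducible constituents, giving the multiplicity-freeness of item (5). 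The extremal degrees $2\theta$ and $2\theta - 1$ then house respectively the trivial representation with eigenvalue $q^{\theta}$ and the Steinberg representation with eigenvalue $q^{\theta-1}$, recovering item (3). The main bookkeeping obstacle is checking that the eigenvalues distribute exactly into the claimed pairs $(q^i, -q^{i+1})$ in each intermediate degree $\theta + i$ with $0 \leq i \leq \theta - 2$; this follows from Lusztig's explicit correspondence between characters of $\langle \mathrm{cox}\rangle$ and cohomological degrees, combined with the parity of $h$.
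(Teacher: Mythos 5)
The paper does not actually prove this theorem: it is recalled verbatim as a collection of facts from Lusztig's work in \cite{cox}, and no argument is given. Your proposal is an attempt to reconstruct Lusztig's proofs, which is a legitimate thing to do in place of the bare citation, and items (1), (2), (4), (5) are described correctly in outline (Artin vanishing plus the dimension bound for (1), Lusztig's semisimplicity result for (2), and the eigenspace decomposition indexed by the Coxeter number $h = 2\theta$ for (4) and (5)).

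However, you introduce a concrete error in your treatment of item (3): you assert that $\mathrm H^{2\theta-1}_c(X^{\theta})$ is the Steinberg representation. This is false. The statement of (3) only claims irreducibility and the eigenvalue $q^{\theta-1}$, and indeed $\mathrm H^{2\theta-1}_c(X^\theta) \simeq \rho_{S^\theta_{\theta-1}}$, which by Proposition \ref{CohomologyCoxeter} corresponds to the bipartition $((\theta-1),(1))$, not to the sign bipartition $(\emptyset,(1^{\theta}))$. The Steinberg representation is $\rho_{S^\theta_0}$ and sits in the \emph{lowest} cohomological degree $\mathrm H^{\theta}_c(X^\theta)$, as one should expect (Steinberg appears in the middle compactly-supported degree of Deligne-Lusztig varieties). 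The misidentification is symptomatic of a second looseness: your claim that each character of $\langle\mathrm{cox}\rangle$ contributes an eigenvalue $\zeta^k q^i$ with $\zeta$ a primitive $h$-th root of unity obscures the fact that for $\mathrm{Sp}(2\theta)$ only $\zeta^0 = 1$ and $\zeta^{\theta} = -1$ actually occur, paired with specific powers of $q$ in a way that is not deducible from parity considerations alone. Determining exactly which eigenvalue lands in which degree, and which unipotent representation it carries, is precisely the content of Lusztig's computation, and is what the paper makes explicit in Proposition \ref{CohomologyCoxeter} via the restriction formula of \cite{cox} Corollary 2.10. Your sketch hand-waves past this bookkeeping, which is where the actual work is.
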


In other words, there exists a uniquely determined family of pairwise distinct symbols $S^\theta_0,\ldots ,S^{\theta}_{\theta}$ and $T^{\theta}_0,\ldots ,T^{\theta}_{\theta-2}$ in $\mathcal Y^{1}_{\theta}$ such that
\begin{align*}
\forall 0\leq i \leq \theta-2, & & \mathrm H^{\theta+i}_c(X^{\theta}) & \simeq \rho_{S^{\theta}_i} \oplus \rho_{T^{\theta}_i}, \\
\text{for } i = \theta-1,\theta, & & \mathrm H^{\theta+i}_c(X^{\theta}) & \simeq \rho_{S^{\theta}_{i}}.
\end{align*} 
The representation $\rho_{S^{\theta}_i}$ (resp. $\rho_{T^{\theta}_i}$) corresponds to the eigenspace of the Frobenius $F$ on $\bigoplus_{i\geq 0} \mathrm H_c^{\theta+i}(X^{\theta})$ attached to $q^i$ (resp. to $-q^{i+1}$). Moreover, we know that $\rho_{S^{\theta}_{\theta}}$ is the trivial representation, therefore 
$$\setlength\arraycolsep{2pt}
S^{\theta}_{\theta} = 
\begin{pmatrix}
\theta \\
{}
\end{pmatrix}.$$
Lusztig also gives a formula computing the dimension of the eigenspaces. Specializing to the case of the symplectic group, it reduces to the following statement.

\begin{prop}[\cite{cox}]
For $0\leq i \leq \theta$ we have 
$$\deg(\rho_{S^{\theta}_i}) = q^{(\theta-i)^2} \prod_{s=1}^{\theta-i} \frac{q^{s+i}-1}{q^s-1}\prod_{s=0}^{\theta-i-1} \frac{q^{s+i}+1}{q^s+1}.$$
For $0 \leq j \leq \theta-2$ we have 
$$\deg(\rho_{T^{\theta}_j}) = q^{(\theta-j-1)^2}\frac{(q^{\theta-1}-1)(q^\theta-1)}{2(q+1)}\prod_{s=1}^{\theta-j-2}\frac{q^{s+j}-1}{q^s-1}\prod_{s=2}^{\theta-j-1}\frac{q^{s+j}+1}{q^s+1}.$$
\end{prop}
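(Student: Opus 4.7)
The plan is to identify the symbols $S^\theta_i$ and $T^\theta_j$ explicitly, and then apply Proposition \ref{HookFormula} to compute their dimensions. The identification proceeds via three constraints: (a) the Frobenius eigenvalue pins down the $a$-invariant, which must equal $(\theta-i)^2$ for $S^\theta_i$ and $(\theta-j-1)^2$ for $T^\theta_j$ in order to match the leading power of $q$ in the generic degree; (b) the cuspidal support of the eigenspace, obtainable from Theorem \ref{ComputeInduction} and Pieri's rule applied to the parabolic induction of a suitable Deligne--Lusztig variety, fixes the defect of the symbol; (c) the multiplicity-freeness stated in the previous theorem rules out duplicate candidates. Low-rank cases already confirm the expected shape: the $S^\theta_i$ are defect-$1$ hook-type symbols living in the principal unipotent series, whereas the $T^\theta_j$ lie in Harish--Chandra series associated with a nontrivial cuspidal unipotent of a smaller symplectic factor (reflecting the $\tfrac{1}{q+1}$ and the factor $2$ in the $T^\theta_j$ denominator, which are symptoms of a nonzero invariant $b'(S)$ and cohooks of length $1$).

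Once the symbols are in hand, the hook formula reduces the proof to a bookkeeping exercise. The leading power $q^{a(S)}$ is read off directly from (a); the numerator $\prod_{i=1}^\theta (q^{2i}-1) = \prod_i (q^i-1)(q^i+1)$ splits between the hook contributions $(q^{\ell(h)}-1)$ and the cohook contributions $(q^{\ell(c)}+1)$ in the denominator of Proposition \ref{HookFormula}. After cancellation one recovers the product-quotients $\prod_s (q^{s+i}-1)/(q^s-1)$ and $\prod_s (q^{s+i}+1)/(q^s+1)$ in the $S^\theta_i$ formula. For $T^\theta_j$, the prefactor $\tfrac{(q^{\theta-1}-1)(q^\theta-1)}{2(q+1)}$ should absorb the contribution $2^{b'(S)} = 2$ together with the two ratio factors that are missing from the $T^\theta_j$ products, consistent with the shorter index ranges $s = 1, \ldots, \theta-j-2$ and $s = 2, \ldots, \theta-j-1$ compared to the $S^\theta_i$ case.

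The main obstacle will be step one for arbitrary $\theta$. The clean route is to invoke the explicit Coxeter-character decomposition of \cite{cox}, which identifies each Frobenius eigenspace with a specific unipotent representation labelled by a hook bipartition. A self-contained argument without this input would have to proceed by induction on $\theta$, combining Harish--Chandra restriction (Theorem \ref{ComputeRestriction}) with the Lefschetz trace formula on $X^\theta$; this is essentially the path carried out by Lusztig, so in practice one cites \cite{cox} and checks directly that his generic dimension formula specializes to the stated expressions.
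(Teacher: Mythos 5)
Your final paragraph reaches the correct conclusion, and it is the same one the paper adopts: this proposition is stated with a bare citation to Lusztig's paper \cite{cox}, and the intended justification is simply that Lusztig's generic dimension formula for the Frobenius eigenspaces in Coxeter variety cohomology specializes to the displayed expressions for $\mathrm{Sp}(2\theta,\mathbb F_q)$. No independent proof is offered in the paper.

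However, the route you sketch in the first two paragraphs is circular within the paper's own development. You propose to first identify the symbols $S^\theta_i$ and $T^\theta_j$, then apply the hook formula of Proposition~\ref{HookFormula}. But the paper's identification of those symbols is Proposition~\ref{CohomologyCoxeter}, whose proof \emph{uses} the present dimension formula (for instance, in the base cases $\theta=1,2$ and $\theta=3,4$ it compares candidate bipartitions against the degrees given here to decide between $(\alpha,\beta)=((1),\emptyset)$ and $(\emptyset,(1))$, etc.). Moreover your constraint (a) — that the $a$-invariant must be $(\theta-i)^2$ in order to match the leading power of $q$ in the generic degree — presupposes the very formula you are trying to prove, since that leading power is read off from it. Constraints (b) and (c) are genuine but do not by themselves pin down a unique symbol. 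So the ``identify symbols then compute'' plan cannot serve as an independent proof unless you supply a determination of the symbols that does not rest on this proposition; and if you instead import the identification directly from \cite{cox}, you have already imported the dimension formula and there is nothing left to prove.

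What is correct, and is in fact noted in the paper as a coherence check after Proposition~\ref{CohomologyCoxeter}, is that once the symbols $S^\theta_i = \begin{pmatrix} 0 & \ldots & \theta-i-1 & \theta \\ 1 & \ldots & \theta-i & \end{pmatrix}$ and $T^\theta_j = \begin{pmatrix} 0 & \ldots & \theta-j-3 & \theta-j-2 & \theta-j-1 & \theta \\ 1 & \ldots & \theta-j-2 & & & \end{pmatrix}$ are known, the hook formula does reproduce the displayed degrees; and your heuristic reading of the $\tfrac{1}{2(q+1)}$ factor as a symptom of $b'(T^\theta_j)=1$ and a length-$1$ cohook is sound. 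But that is a consistency check on Proposition~\ref{CohomologyCoxeter}, not a proof of the present statement. In short: drop the first two paragraphs, keep the last sentence, and cite \cite{cox}.
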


Our goal in this section is to determine the symbols $S^{\theta}_i$ and $T^{\theta}_j$ explicitly. This is done in the following proposition.

\begin{prop}\label{CohomologyCoxeter}
For $0\leq i \leq \theta$ and $0\leq j \leq \theta-2$, we have 
\begin{align*}
\setlength\arraycolsep{2pt}
S^{\theta}_i = 
\begin{pmatrix}
0 & \ldots & \theta-i-1 & \,\,\,\theta \\
1 & \ldots & \theta-i & 
\end{pmatrix},
&&
T^{\theta}_j = 
\begin{pmatrix}
0 & \ldots & \theta-j-3 & \theta-j-2 & \theta-j-1 & \theta \\
1 & \ldots & \theta-j-2 & & & 
\end{pmatrix}.
\end{align*}
\end{prop}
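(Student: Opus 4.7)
The plan is to verify the proposed identification by matching dimensions via the hook formula (Proposition~\ref{HookFormula}) with Lusztig's degree formulas recalled just before the statement. A direct check shows that the candidate $S^\theta_i$ has rank $\theta$ and defect $1$, hence lies in the principal series $\mathcal E_0$, while $T^\theta_j$ has rank $\theta$ and defect $3$, hence lies in $\mathcal E_1$. After aligning the rows, via an appropriate number of shifts, with the cuspidal symbols $S_0$ and $S_1$, the two candidates correspond respectively to the bipartitions $((i),(1^{\theta-i}))$ of $\theta$ and $((j),(1^{\theta-j-2}))$ of $\theta-2$. This is already consistent with the boundary cases: $S^\theta_0$ gives the Steinberg representation (bipartition $(\emptyset,(1^\theta))$) and $S^\theta_\theta$ gives the trivial representation (bipartition $((\theta),\emptyset)$), as required.

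Next I would enumerate all the hooks and cohooks of each candidate and plug them into Proposition~\ref{HookFormula}. In $S^\theta_i$ the isolated entry $\theta$ in the first row contributes one long hook as well as several cohooks pairing $\theta$ with entries of the second row, while the "arithmetic" segments $0,1,\dots,\theta-i-1$ and $1,\dots,\theta-i$ generate a predictable pattern of short hooks of length $1$. After computing $a(S^\theta_i)$ and $b'(S^\theta_i)$ and telescoping the resulting products, the dimension should collapse to $q^{(\theta-i)^2}\prod_{s=1}^{\theta-i}\frac{q^{s+i}-1}{q^s-1}\prod_{s=0}^{\theta-i-1}\frac{q^{s+i}+1}{q^s+1}$. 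An analogous calculation for $T^\theta_j$, where the wider first-row gap between $\theta-j-1$ and $\theta$ creates extra large hooks and cohooks, should yield the additional factor $\frac{(q^{\theta-1}-1)(q^\theta-1)}{2(q+1)}$ of Lusztig's second formula, the $2$ in the denominator reflecting the value of $b'(T^\theta_j)$ for a defect-$3$ symbol.

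To conclude, I would combine the bijection of Theorem~\ref{ClassificationUnip} with the distribution of Frobenius eigenvalues: within the series $\mathcal E_0$ the family $\rho_{S^\theta_0},\dots,\rho_{S^\theta_\theta}$ carries the eigenvalues $q^0,q^1,\dots,q^\theta$, and within $\mathcal E_1$ the family $\rho_{T^\theta_0},\dots,\rho_{T^\theta_{\theta-2}}$ carries the eigenvalues $-q,\dots,-q^{\theta-1}$. Since each of these families is determined by its pairwise distinct polynomial dimensions in $q$, the hook-formula match obtained above forces the identification. The main obstacle I expect is the hook and cohook bookkeeping, especially for $T^\theta_j$, where the three-row-like structure creates overlapping contributions that must be carefully listed before the telescoping reduces them to Lusztig's compact form. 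A secondary subtlety is to make rigorous the uniqueness of the candidate within its series, which may require a small independent argument comparing polynomial degrees of unipotent characters in the given bipartitions family.
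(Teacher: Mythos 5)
Your proposal takes a genuinely different route from the paper. The paper proves Proposition~\ref{CohomologyCoxeter} by induction on $\theta$, using Lusztig's restriction identity
${}^*\mathrm R_{\theta-1}^{\theta}\bigl(\mathrm H^{\theta+i}_c(X^{\theta})\bigr) \simeq \mathrm H^{\theta-1+i}_c(X^{\theta-1}) \oplus \mathrm H^{\theta+i-2}_c(X^{\theta-1})(-1)$ (Corollary~2.10 of \cite{cox}) together with the branching rule of Theorem~\ref{ComputeRestriction}. Knowing the symbols for rank $\theta-1$ then narrows the candidate bipartition for rank $\theta$ down to one, or occasionally two, possibilities; in the latter case the paper settles the ambiguity by comparing with Lusztig's degree formula. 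You instead propose to bypass the restriction recursion entirely and identify the symbols by matching hook-formula degrees against Lusztig's degree formulas.

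The gap in your approach is the step you label a ``secondary subtlety,'' and it is in fact the central difficulty. Matching the hook-formula degree of the candidate $S^\theta_i$ (resp.\ $T^\theta_j$) with Lusztig's polynomial only shows that your candidate has the \emph{right} degree; it does not show that it is the \emph{only} symbol in the relevant Harish-Chandra series with that degree. Generic degree is not injective on unipotent characters of $\mathrm{Sp}(2\theta,\mathbb F_q)$: already in $\mathrm{Sp}(4,\mathbb F_q)$ two distinct members of the principal series $\mathcal E_0$ share the same generic degree (they lie in the same Lusztig family). Your assertion ``Since each of these families is determined by its pairwise distinct polynomial dimensions in $q$, the hook-formula match forces the identification'' is therefore exactly the statement that needs proof, and it does not follow from the preceding computation. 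Establishing that the specific degree polynomials $q^{(\theta-i)^2}\prod_{s=1}^{\theta-i}\tfrac{q^{s+i}-1}{q^s-1}\prod_{s=0}^{\theta-i-1}\tfrac{q^{s+i}+1}{q^s+1}$ (and the analogous ones for $T^\theta_j$) are each achieved by a unique bipartition would require a separate, nontrivial analysis, not a ``small independent argument.'' The paper sidesteps this problem: the restriction identity already pins the candidate down to at most two symbols, and only then is a degree comparison invoked, which is a legitimate use of dimensions because one is merely distinguishing between two explicitly listed options. If you want to retain your dimension-based strategy, you would need to supply this injectivity statement; otherwise the inductive restriction argument of the paper is the more robust path.
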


\begin{rk}
In terms of bipartitions, $S^{\theta}_i$ corresponds to $((i),(1^{\theta-i}))$ and $T^{\theta}_j$ corresponds to $((j),(1^{\theta-2-j}))$.
\end{rk}

We note that the statement is coherent with the two dimension formulae that we provided earlier. That is, the degree of $\rho_{S^{\theta}_i}$ (resp. of $\rho_{T^{\theta}_j}$) computed with the hook formula of Proposition \ref{HookFormula}, agrees with the dimension of the eigenspace of $q^i$ (resp. of $-q^{j+1}$) in the cohomology of $X^{\theta}$ as given in the previous paragraph.

\begin{proof} We use induction on $\theta\geq 0$. Since we already know that $S^{\theta}_{\theta}$ is the symbol corresponding to the trivial representation, the proposition is proved for $\theta=0$. Thus we may assume $\theta\geq 1$. We consider the block diagonal Levi complement $L \simeq \mathrm{GL}(1,\mathbb F_q) \times \mathrm{Sp}(2(\theta-1),\mathbb F_q)$, and we write ${}^*\mathrm R_{\theta-1}^{\theta}$ for the restriction to $\mathrm{Sp}(2(\theta-1),\mathbb F_q)$ of the Harish-Chandra restriction from $\mathrm{Sp}(2\theta,\mathbb F_q)$ to $L$. According to \cite{cox} Corollary 2.10, for all $0\leq i \leq \theta$ we have an $\mathrm{Sp}(2(\theta-1),\mathbb F_q)\times \langle F \rangle$-equivariant isomorphism 
\begin{equation}\label{eq}
{}^*\mathrm R_{\theta-1}^{\theta} \left(\mathrm H^{\theta+i}_c(X^{\theta})\right) \simeq \mathrm H^{\theta-1+i}_c(X^{\theta-1}) \oplus \mathrm H^{\theta-1+(i-1)}_c(X^{\theta-1})(-1)\tag{$*$}.
\end{equation}
The right-hand side can be computed by induction hypothesis whereas the left-hand side can be computed using Theorem \ref{ComputeRestriction}. We fix $0 \leq i \leq \theta - 1$ and $0 \leq j \leq \theta - 2$. We denote by $(\delta,\alpha,\beta)$ and $(\nu,\gamma,\delta)$ the alternate labelling by bipartitions of the representations $\rho_{S^{\theta}_i}$ and $\rho_{T^{\theta}_j}$ respectively. Recall that the restriction ${}^*\mathrm R_{\theta-1}^{\theta}\,\rho_{\delta,\alpha,\beta}$ is the sum of all the representations $\rho_{\delta,\alpha',\beta'}$ where $(\alpha',\beta')$ is obtained from $(\alpha,\beta)$ by removing a single box in one of their Young diagrams. The similar description also holds for ${}^*\mathrm R_{\theta-1}^{\theta}\,\rho_{\nu,\gamma,\delta}$. \\
First we determine $S^{\theta}_i$ by identifying the $q^i$-eigenspace of the Frobenius in \eqref{eq}. We distinguish different cases depending on the values of $\theta$ and $i$.
\begin{enumerate}[label={--},noitemsep,topsep=0pt]
\item \textbf{Case }$\mathbf{\theta=1}$. In this case $i = 0$. The right-hand side of \eqref{eq} is $\rho_{S^0_0} \simeq \rho_{0,\emptyset,\emptyset}$ with eigenvalue $1$. Thus, $\delta = 0$ and the bipartition $(\alpha,\beta)$ consists of a single box. Therefore $(\alpha,\beta) = ((1),\emptyset)$ or $(\emptyset,(1))$. By Theorem \ref{KnownResultsCoxeter}, we know that $\rho_{0,\alpha,\beta}$ has degree $q$. This forces $(\alpha,\beta) = (\emptyset,(1))$.
\item \textbf{Case }$\mathbf{\theta=2}$ \textbf{ and }$\mathbf{i=0}$. The eigenspace attached to $1$ on the right-hand side of \eqref{eq} is $\rho_{S^{1}_0} \simeq \rho_{0,\emptyset,(1)}$. Thus, $\delta = 0$ and there is a single removable box in the bipartition $(\alpha,\beta)$. When we remove it, we obtain $(\emptyset,(1))$. Therefore, $(\alpha,\beta) = (\emptyset,(2))$ or $(\emptyset,(1^2))$. By Theorem \ref{KnownResultsCoxeter}, we know that $\rho_{0,\alpha,\beta}$ has degree $q^4$, thus $(\alpha,\beta) = (\emptyset,(1^2))$.

\item \textbf{Case }$\mathbf{\theta>2}$\textbf{ and }$\mathbf{i=0}$. The eigenspace attached to $1$ on the right-hand side of \eqref{eq} is $\rho_{S^{\theta-1}_0} \simeq \rho_{0,\emptyset,(1^{\theta-1})}$. Thus, $\delta = 0$ and there is a single removable box in the bipartition $(\alpha,\beta)$. When we remove it, we obtain $(\emptyset,(1^{\theta-1}))$. The only such bipartition is $(\alpha,\beta) = (\emptyset,(1^{\theta}))$.
\item \textbf{Case }$\mathbf{\theta>2}$\textbf{ and }$\mathbf{1\leq i \leq k-1}$. The eigenspace attached to $p^i$ on the right-hand side of \eqref{eq} is $\rho_{S^{\theta-1}_i} \oplus \rho_{S^{\theta-1}_{i-1}} \simeq \rho_{0,(i),(1^{\theta-1-i})} \oplus \rho_{0,(i-1),(1^{\theta-i})}$. Thus, $\delta = 0$ and there are exactly two removable boxes in the bipartition $(\alpha,\beta)$. When we remove one of them, we obtain either $((i),(1^{\theta-1-i}))$ or $((i-1),(1^{\theta-i}))$. The only such bipartition is $(\alpha,\beta) = ((i),(1^{\theta-i}))$.
\end{enumerate}

It remains to determine $T^{\theta}_j$ for $0\leq j \leq \theta-2$. 

\begin{enumerate}[label={--},noitemsep,topsep=0pt]
\item \textbf{Case }$\mathbf{\theta=2}$. The eigenspace attached to $-p$ on the right-hand side of \eqref{eq} is $0$. Thus, the symbol $T^2_0 \in \mathcal Y^1_2$ has no hook at all, implying that it is cuspidal. Since $\mathrm{Sp}(4,\mathbb F_q)$ admits only one unipotent cuspidal representation, we deduce that $\nu = 1$ and $(\gamma,\delta) = (\emptyset,\emptyset)$.
\item \textbf{Case }$\mathbf{k=3}$. First when $j=0$, the eigenspace attached to $-p$ on the right-hand side of \eqref{eq} is $\rho_{T^{2}_0} \simeq \rho_{1,\emptyset,\emptyset}$. Thus, $\nu = 1$ and there is a single box in the bipartition $(\gamma,\delta)$. Therefore $(\gamma,\delta) = ((1),\emptyset)$ or $(\emptyset,(1))$. By Theorem \ref{KnownResultsCoxeter}, we know that $\rho_{1,\gamma,\delta}$ has degree $q^4\frac{(q^2-1)(q^3-1)}{2(q+1)}$, thus $(\gamma,\delta) = (\emptyset,(1))$.\\
Then when $j=1$, the eigenspace attached to $-p^2$ on the right-hand side of \eqref{eq} is $\rho_{T^{2}_0} \simeq \rho_{1,\emptyset,\emptyset}$. Thus, $\nu = 1$ and as in the case $j=0$ we have $(\gamma,\delta) = ((1),\emptyset)$ or $(\emptyset,(1))$. We can deduce that it is equal to the former by comparing the dimensions or by using the fact that the symbols $T_j^{\theta}$ are pairwise distinct.
\item \textbf{Case }$\mathbf{\theta=4}$\textbf{ and }$\mathbf{j=0}$. The eigenspace attached to $-p$ on the right-hand side of \eqref{eq} is $\rho_{T^{3}_0} \simeq \rho_{1,\emptyset,(1)}$. Thus, $\nu = 1$ and there is a single removable box in the bipartition $(\gamma,\delta)$. When we remove it, we obtain $(\emptyset,(1))$. Therefore, $(\gamma,\delta) = (\emptyset,(2))$ or $(\emptyset,(1^2))$. By Theorem \ref{KnownResultsCoxeter}, we know that $\rho_{1,\gamma,\delta}$ has degree $q^9\frac{(q^3-1)(q^4-1)}{2(q+1)}$, thus $(\gamma,\delta) = (\emptyset,(1^2))$.
\item \textbf{Case }$\mathbf{\theta>4}$\textbf{ and }$\mathbf{j=0}$. The eigenspace attached to $-p$ on the right-hand side of \eqref{eq} is $\rho_{T^{\theta-1}_0} \simeq \rho_{1,\emptyset,(1^{\theta-3})}$. Thus, $\nu = 1$ and there is a single removable box in the bipartition $(\gamma,\delta)$. When we remove it, we obtain $(\emptyset,(1^{\theta-3}))$. The only such bipartition is $(\gamma,\delta) = (\emptyset,(1^{\theta-2}))$.
\item \textbf{Case }$\mathbf{\theta=4}$\textbf{ and }$\mathbf{j=\theta-2}$. The eigenspace attached to $-p^{3}$ on the right-hand side of \eqref{eq} is $\rho_{T^{3}_{1}} \simeq \rho_{1,(1),\emptyset}$. Thus, $\nu = 1$ and there is a single removable box in the bipartition $(\gamma,\delta)$. When we remove it, we obtain $((1),\emptyset)$. Therefore, $(\gamma,\delta) = ((2),\emptyset)$ or $((1^2),\emptyset)$. By Theorem \ref{KnownResultsCoxeter}, we know that $\rho_{1,\gamma,\delta}$ has degree $q\frac{(q^3-1)(q^4-1)}{2(q+1)}$, thus $(\gamma,\delta) = ((2),\emptyset)$.
\item \textbf{Case }$\mathbf{\theta>4}$\textbf{ and }$\mathbf{j = \theta-2}$. The eigenspace attached to $-p^{\theta-1}$ on the right-hand side of \eqref{eq} is $\rho_{T^{\theta-1}_{\theta-3}} \simeq \rho_{1,(\theta-3),\emptyset}$. Thus, $\nu = 1$ and there is a single removable box in the bipartition $(\gamma,\delta)$. When we remove it, we obtain $((\theta-3),\emptyset)$. The only such bipartition is $(\gamma,\delta) = ((\theta-2),\emptyset)$.
\item \textbf{Case }$\mathbf{1\leq j \leq \theta-3}$. The eigenspace attached to $-p^{j+1}$ on the right-hand side of \eqref{eq} is $\rho_{T^{\theta-1}_{j}} \oplus \rho_{T^{\theta-1}_{j-1}} \simeq \rho_{1,(j),(1^{\theta-3-j})}\oplus \rho_{1,(j-1),(1^{\theta-2-j})}$. Thus, $\nu = 1$ and there are exactly two removable boxes in the bipartition $(\gamma,\delta)$. When we remove one of them, we obtain either $((j),(1^{\theta-3-j}))$ or $((j-1),(1^{\theta-2-j}))$. The only such bipartition is $(\gamma,\delta) = ((j),(1^{\theta-2-j}))$.
\end{enumerate}
\end{proof}

\section{The cohomology of $S_{\theta}$}

The last three sections of the paper are devoted to proving the main theorem below, which describes the cohomology of the variety $S_{\theta}$. Since $S_0$ is a point and $S_1 \simeq \mathbb P^1$, the cases $\theta = 0$ or $1$ are trivial. \textbf{From now and up to the end of the paper, we assume that $\theta \geq 2$}.

\begin{theo}\label{MainTheorem}
The following statements hold.
\begin{enumerate}[label=\upshape (\arabic*), topsep = 0pt]
		\item All the cohomology groups of $S_{\theta}$ of odd degree vanish.
		\item For $0 \leq i \leq \theta$, we have an $\mathrm{Sp}(2\theta,\mathbb F_q)$-equivariant isomorphism
		$$\mathrm H^{2i}(S_{\theta}) \simeq \bigoplus_{s=0}^{\min(i,\theta-i)} \rho_{\scaleto{
		\begin{pmatrix}
		s & \theta+1-s \\
		0 & {} 
		\end{pmatrix}}{25pt}} \oplus \bigoplus_{s = 0}^{\min(i-1,\theta-1-i)} \rho_{\scaleto{
		\begin{pmatrix}
		0 & s + 1 & \theta-s \\
		{} & {} & {} 
		\end{pmatrix}}{25pt}}.$$
		The Frobenius acts like multiplication by $q^i$ on the first summand, and multiplication by $-q^i$ on the second summand.
	\end{enumerate}
\end{theo}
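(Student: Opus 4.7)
The plan is to exploit the stratification from Proposition \ref{Stratification} and the induced spectral sequence
$$E_1^{\theta', i} = \mathrm{H}_c^{\theta' + i}(X_{I_{\theta'}}(w_{\theta'})) \Longrightarrow \mathrm{H}^{\theta' + i}(S_{\theta}).$$
First I would compute $E_1$ explicitly. Proposition \ref{TransitivityIdentity} shows that $X_{I_{\theta'}}(w_{\theta'})$ is parabolically induced from the Coxeter variety of $\mathrm{Sp}(2\theta', \mathbb F_q)$, so each $E_1^{\theta', i}$ is a Harish-Chandra induction from $\mathrm{GL}(\theta-\theta') \times \mathrm{Sp}(2\theta')$ to $\mathrm{Sp}(2\theta)$ of the representations identified in Proposition \ref{CohomologyCoxeter}. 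Feeding the bipartitions $((i),(1^{\theta'-i}))$ and $((j),(1^{\theta'-2-j}))$ into Theorem \ref{ComputeInduction} (Pieri's rule in type $B$) yields an explicit sum of symbols, row by row; the Frobenius acts semisimply on $E_1^{\theta', i}$ with eigenvalues $q^i$ and $-q^{i+1}$.

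Next, I would argue that the spectral sequence degenerates at $E_2$ and that the filtration on the abutment splits as a Frobenius module. For $r \geq 2$, the differential $d_r$ sends $E_r^{\theta', i}$ to $E_r^{\theta'+r, i-r+1}$, but the eigenvalue sets $\{q^i, -q^{i+1}\}$ and $\{q^{i-r+1}, -q^{i-r+2}\}$ are disjoint since $q$ is not a root of unity, so $d_r = 0$. Hence $E_\infty = E_2$, and it remains to determine which $E_1$ terms survive to $E_2$.

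The heart of the argument is to bound the Frobenius weights on the abutment. I would construct a resolution $\pi : \widetilde{S_{\theta}} \to S_{\theta}$ by blowing up the finite singular locus $X_{I}(\mathrm{id})$, and analyze it in an affine étale neighborhood of a singular point, identifying a finite étale cover to the symmetric determinantal variety of matrices of rank $\leq 1$. The desingularization of the latter by a single blow-up is carried out in \cite{blowup}, yielding smoothness of $\widetilde{S_{\theta}}$. By Deligne's weight theorem, $\mathrm{H}^k(\widetilde{S_{\theta}})$ is then pure of weight $k$. Combining this with the long exact sequence relating $\mathrm{H}^\bullet(S_{\theta})$, $\mathrm{H}^\bullet(\widetilde{S_{\theta}})$, and the cohomology of the exceptional divisor (a smooth projective variety fibered over a finite set), I would deduce that $\mathrm{H}^k(S_{\theta})$ is pure of weight $2\lfloor k/2 \rfloor$. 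Any $E_2^{\theta', i}$ on the diagonal $\theta' + i = k$ whose Frobenius eigenvalue carries the wrong weight must vanish; this immediately kills many entries of the spectral sequence.

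Finally, I would bring in intersection cohomology to handle the remaining degrees. Since $S_{\theta}$ has only isolated singularities, $\mathrm{IH}^k(S_{\theta}) \simeq \mathrm{H}^k(S_{\theta})$ for $k > \theta$, and on the projective normal variety $S_{\theta}$ the intersection cohomology $\mathrm{IH}^k$ is pure of weight $k$. For $k > \theta$ odd, the resulting odd Frobenius weights clash with the even weights from the previous step, forcing $\mathrm{H}^k(S_{\theta}) = 0$; the hypercohomology spectral sequence for the intersection complex (Figure \ref{Figure2}) then propagates the vanishing to all odd $k$, proving part (1). For part (2), the explicit $E_1$ from step one together with the weight constraints leaves precisely the two families of symbols in the statement on each diagonal $\theta' + i = 2j$, with Frobenius acting by $q^j$ and $-q^j$ respectively. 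The main obstacle will be the purity step: performing the local analysis of the blow-up via the determinantal model and extracting weight information on $\mathrm{H}^k(S_{\theta})$ with enough precision to control the spectral sequence degree by degree.
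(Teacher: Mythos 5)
Your plan tracks the paper's strategy closely (stratification spectral sequence and its explicit first page, degeneration at $E_2$ by Frobenius eigenvalue separation, blow-up via the symmetric determinantal model to establish purity, and intersection cohomology to go the rest of the way), and everything up to the last paragraph is essentially what the paper does. However, the final step contains a real gap.

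For $k>\theta$ odd, the weight clash between $\mathrm H^k(S_\theta)=\mathrm{IH}^k(S_\theta)$ (even weight by purity, odd weight by $\mathrm{IH}$) does force vanishing, as you say. But your assertion that the hypercohomology spectral sequence for $j_{!*}\overline{\mathbb Q_{\ell}}$ then ``propagates the vanishing to all odd $k$'' does not follow. For $k\leq\theta$ odd, the only information the sequence \eqref{spectralbis} gives in degree $k$ is about the pieces $F_\infty^{k,0}$ and $F_\infty^{0,k}$ of $\mathrm{IH}^k(S_\theta)$: since $F_\infty^{k,0}$ is a quotient of $\mathrm H^k(S_\theta)$ (even weight) and a subobject of $\mathrm{IH}^k(S_\theta)$ (odd weight), one deduces that the differential $\delta_{k-1}$ from the skyscraper column into $\mathrm H^k(S_\theta)$ is \emph{surjective} — not that $\mathrm H^k(S_\theta)$ vanishes. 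Symmetrically, the weight constraints from Corollary \ref{Purity} alone do not kill the $q^i$-eigenspace of $E_2^{i+1,i}$ on the odd diagonal $\theta'+i=2i+1$ when $2i+1<\theta$, because its weight $2i$ is compatible with the expected weight of $\mathrm H^{2i+1}(S_\theta)$; so you cannot conclude $\mathrm{Ker}(d_A^{i,i})=A_0^{i,i}$ on the even diagonal $2i<\theta$ from weights either.

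The missing ingredient, which the paper uses at precisely this point, is the hard Lefschetz theorem for intersection cohomology on the projective variety $S_\theta$. It gives $\mathrm{IH}^{2i}(S_\theta)\simeq \mathrm{IH}^{2(\theta-i)}(S_\theta)(\theta-2i)$. Combined with the vanishing of $\delta_{2i-1}$ (a weight argument that works precisely because $2i-1$ is odd), one gets an embedding $\mathrm H^{2i}(S_\theta)\hookrightarrow \mathrm{IH}^{2i}(S_\theta)$ whose target is computed by Proposition \ref{CohomologyUpperHalf} since $2(\theta-i)>\theta$. Comparing with the lower bound $A_0^{i,i}\oplus B_0^{i+1,i-1}\subset \mathrm H^{2i}(S_\theta)$ already visible in \eqref{spectral} forces equality, proving (2) below the middle degree, and then (1) follows via Lemma \ref{Equivalence}. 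Without hard Lefschetz (or, equivalently, Poincaré duality for $\mathrm{IH}$) there is no mechanism in your proposal that controls $\mathrm H^k(S_\theta)$ for $k<\theta$.
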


\begin{rks} Let us make a few comments.
\begin{enumerate}[label={--},noitemsep,topsep=0pt]
\item We may rewrite the formula in terms of the alternate labelling of the unipotent representations of $\mathrm{Sp}(2\theta,\mathbb F_p)$. We obtain
$$\mathrm H^{2i}(S_{\theta}) \simeq \bigoplus_{s=0}^{\min(i,\theta-i)} \rho_{0,(\theta-s,s),\emptyset} \oplus \bigoplus_{s = 0}^{\min(i-1,\theta-1-i)} \rho_{1,(\theta-2-s,s),\emptyset}.$$
\item A unipotent cuspidal representation occurs in the cohomology of $S_{\theta}$ only in the cases $\theta = 0$ and $\theta = 2$. When $\theta = 0$ it corresponds to $\mathrm H^0(S_{0})$ which is trivial. When $\theta = 2$ it occurs in $\mathrm H^2(S_{2})$ with the eigenvalue $-p$. All the representations occuring in the cohomology of $S_{\theta}$ have cuspidal support given by one of these two cuspidal unipotent representations.
\item Even though $S_{\theta}$ has isolated singularities for $\theta \geq 2$, its cohomology looks like the cohomology of a smooth projective variety, in so that it satisfies Poincaré duality, hard Lefschetz and purity of the Frobenius action. 
\end{enumerate}
\end{rks}

In order to compute the cohomology of $S_{\theta}$, we use the stratification by classical Deligne-Lusztig varieties which we recalled in Proposition \ref{Stratification}, and we analyze the associated spectral sequence. It is given in its first page by  
\begin{equation}\label{spectral}
E^{\theta',i}_1 = \mathrm H^{\theta'+i}_c(X_{I_{\theta'}}(w_{\theta'})) \implies \mathrm H^{\theta'+i}(S_{\theta}).
\tag{$E$}
\end{equation}
Let us first determine each term explicitely. By Proposition \ref{TransitivityIdentity} and using the notations introduced there, we have an isomorphism 
$$X_{I_{\theta'}}(w_{\theta'}) \simeq \mathrm{Sp}(2\theta,\mathbb F_q)/U_{K_{\theta'}} \times_{L_{K_{\theta'}}} X_{I_{\theta'}}^{\mathbf L_{K_{\theta'}}}(w_{\theta'}).$$
Taking cohomology, this identity translates into some Harish-Chandra induction 
$$\mathrm H^{\bullet}_c(X_{I_{\theta'}}(w_{\theta'})) \simeq \mathrm R_{L_{K_{\theta'}}}^{\mathrm{Sp}(2\theta,\mathbb F_q)} \, \mathrm H^{\bullet}_c(X_{I_{\theta'}}^{\mathbf L_{K_{\theta'}}}(w_{\theta'})).$$
The Deligne-Lusztig variety $X_{I_{\theta'}}^{\mathbf L_{K_{\theta'}}}(w_{\theta'})$ for the Levi complement $L_{K_{\theta'}} \simeq \mathrm{GL}(\theta-\theta',\mathbb F_q) \times \mathrm{Sp}(2\theta',\mathbb F_q)$ is isomorphic to the Coxeter variety $X^{\theta'}$ with the $\mathrm{GL}$-part acting trivially. Thus, the terms $E_1^{\theta',i}$ are the Harish-Chandra inductions of the cohomology groups of the Coxeter varieties $X^{\theta'}$, which we have determined in Proposition \ref{CohomologyCoxeter}. Let us compute these inductions explicitely.

\begin{lem}\label{FirstTerms}
Let $0 \leq i \leq \theta' \leq \theta$. We have $E_1^{\theta',i} = A^{\theta',i} \oplus B^{\theta',i}$ with
\begin{align*}
A^{\theta',i} = \bigoplus_{\alpha,\beta} \rho_{0,\alpha,\beta}, & & B^{\theta',i} = \bigoplus_{\gamma,\delta} \rho_{1,\gamma,\delta},
\end{align*}
where $(\alpha,\beta)$ runs over all the bipartitions of $\theta$ such that, for some $0 \leq d \leq \theta - \theta'$, we have 
$$\begin{cases}
\alpha = (i + d - s, s) \text{ for some } 0 \leq s \leq \min(d,i),\\
\beta = (\theta-\theta'-d, 1^{\theta'-i})\text{ or } (\theta-\theta'-d+1,1^{\theta'-1-i}),
\end{cases}$$
and if $i \leq \theta'-2$, $(\gamma,\delta)$ runs over all the bipartitions of $\theta - 2$ such that, for some $0 \leq d \leq \theta - \theta'$, we have
$$\begin{cases}
\gamma = (i + d - s, s) \text{ for some } 0 \leq s \leq \min(d,i),\\
\delta = (\theta-\theta'-d, 1^{\theta'-2-i}) \text{ or } (\theta-\theta'-d+1,1^{\theta'-3-i}).
\end{cases}$$
The summand $A^{\theta',i}$ (resp. $B^{\theta',i}$) is the eigenspace of the Frobenius for the eigenvalue $q^i$ (resp. the eigenvalue $-q^{i+1}$).
\end{lem}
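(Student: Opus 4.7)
The plan is to combine the transitivity identity of Proposition \ref{TransitivityIdentity} — which realises the stratum $X_{I_{\theta'}}(w_{\theta'})$ as a parabolically induced variety — with the cohomology computation for the Coxeter variety from Proposition \ref{CohomologyCoxeter}, and then to unfold the resulting Harish-Chandra induction via the Pieri-type rule of Theorem \ref{ComputeInduction}.

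First, I would apply $\ell$-adic cohomology to the isomorphism of Proposition \ref{TransitivityIdentity}. Because the factor $\mathrm{Sp}(2\theta,\mathbb F_q)/U_{K_{\theta'}}$ is zero-dimensional, the fibered product simply reassembles several copies of $X^{\theta'}$ permuted by $\mathrm{Sp}(2\theta,\mathbb F_q)$ in the prescribed way, so taking cohomology realises the Harish-Chandra induction from $L_{K_{\theta'}} \simeq \mathrm{GL}(\theta-\theta',\mathbb F_q) \times \mathrm{Sp}(2\theta',\mathbb F_q)$ to $\mathrm{Sp}(2\theta,\mathbb F_q)$. Since the $\mathrm{GL}$-factor acts trivially on $X^{\theta'}$, this yields an $\mathrm{Sp}(2\theta,\mathbb F_q)\times \langle F\rangle$-equivariant identification
$$E_1^{\theta',i} \simeq \mathrm R_{L_{K_{\theta'}}}^{\mathrm{Sp}(2\theta,\mathbb F_q)}\bigl(\mathbf 1 \boxtimes \mathrm H_c^{\theta'+i}(X^{\theta'})\bigr),$$
so every Frobenius eigenvalue on the right-hand side persists on the left.

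Next, by Proposition \ref{CohomologyCoxeter} and the bipartition dictionary recalled in the remark that follows it, the cohomology splits as $\mathrm H_c^{\theta'+i}(X^{\theta'}) \simeq \rho_{0,(i),(1^{\theta'-i})} \oplus \rho_{1,(i),(1^{\theta'-2-i})}$ when $0 \leq i \leq \theta'-2$, with Frobenius acting by $q^i$ and $-q^{i+1}$ respectively, while only the first summand survives when $i \in \{\theta'-1,\theta'\}$. I would then apply Theorem \ref{ComputeInduction} with $a = \theta-\theta'$ separately to each summand. The resulting constituents are the $\rho_{0,\alpha,\beta}$ forming $A^{\theta',i}$ and the $\rho_{1,\gamma,\delta}$ forming $B^{\theta',i}$, indexed by bipartitions obtained by adding $d$ boxes to the first partition and $a-d$ boxes to the second, for some $0 \leq d \leq a$, no two added boxes sharing a column.

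The final step is a short combinatorial enumeration. Adding $d$ boxes to the single-row diagram $(i)$ with no two in the same column: placing $s$ of them below existing columns (forming a second row) and $d-s$ in new columns (extending row 1) yields $\alpha = (i+d-s,s)$ precisely for $0 \leq s \leq \min(d,i)$. Adding $k=a-d$ boxes to the single-column diagram $(1^{\theta'-i})$: either the first column receives no new box and all $k$ boxes extend row 1, giving $\beta = (k+1, 1^{\theta'-i-1})$; or the first column gains one box and the remaining $k-1$ boxes extend row 1, giving $\beta = (k, 1^{\theta'-i})$ (with $k \geq 1$). Substituting $k = \theta-\theta'-d$ recovers the two options listed for $\beta$ in the lemma, and the same enumeration applied to $(1^{\theta'-2-i})$ produces $\delta$. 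The main bookkeeping is to verify that every pair $(\alpha,\beta)$ arises for a unique $d$ (forced by $d = |\alpha|-i$) and that the two candidate shapes for $\beta$ never collide, which is immediate since they have different numbers of rows; beyond this, no serious obstacle is anticipated.
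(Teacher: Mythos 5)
Your proposal follows essentially the same route as the paper: realize $E_1^{\theta',i}$ as the Harish-Chandra induction of $\mathrm H_c^{\theta'+i}(X^{\theta'})$ via Proposition \ref{TransitivityIdentity}, identify the summands via Proposition \ref{CohomologyCoxeter} and the bipartition dictionary, and unfold the induction by the type-$B$ Pieri rule of Theorem \ref{ComputeInduction}; the only difference is that you carry out the Pieri enumeration in full, where the paper cites an earlier computation. One small inaccuracy in your final bookkeeping: the claim that the two candidate shapes for $\beta$ (resp.\ $\delta$) ``never collide'' is not correct — they do coincide exactly when $i = \theta'$ (both reduce to $(\theta-\theta'-d)$) and when $d = \theta-\theta'$ (both reduce to $(1^{\theta'-i})$), which is precisely what the remark following the Lemma records; this does not affect the asserted decomposition, but the justification you offer for uniqueness of $\beta$ is too quick.
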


\begin{rk}
In particular, for each $d$ there are at most two possibilities for $\beta$ and $\delta$. To remove ambiguity, let us point out that there are just two situations, possibly overlapping, where the given possibilities for $\beta$ (resp. $\delta$) actually coincide, that is 
\begin{enumerate}[label={--},noitemsep,topsep=0pt]
\item if $i = \theta'$ (resp. $i = \theta' - 2$), in which case $\beta = (\theta - \theta' - d)$ (resp. $\delta = (\theta - \theta' - d)$),
\item if $d = \theta - \theta'$, in which case $\beta = (1^{\theta' - i})$ and $\delta = (1^{\theta'-2-i})$.
\end{enumerate}
\end{rk}

\begin{proof}
Using Pieri's rule for Coxeter groups of type $B_n$ as we recalled in Proposition \ref{ComputeInduction}, we must decompose the Harish-Chandra inductions
\begin{align*}
R_{\mathrm{GL}(\theta-\theta',\mathbb F_q) \times \mathrm{Sp}(2\theta',\mathbb F_q)}^{\mathrm{Sp}(2\theta,\mathbb F_q)} \, \mathbf 1 \boxtimes \rho_{S_i^{\theta'}}, & & R_{\mathrm{GL}(\theta-\theta',\mathbb F_q) \times \mathrm{Sp}(2\theta',\mathbb F_q)}^{\mathrm{Sp}(2\theta,\mathbb F_q)} \, \mathbf 1 \boxtimes \rho_{T_j^{\theta'}},
\end{align*}
where $S_i^{\theta'}$ and $T_j^{\theta'}$ are the symbols determined in Proposition \ref{CohomologyCoxeter}. The Young diagrams of the bipartitions associated to these symbols have the form 
\begin{center}
\ydiagram{3}*[\ldots]{3+1}*{4+1} \quad , \quad \ydiagram{1,1,1,0,1}*[\vdots]{0,0,0,1}
\end{center}
The problem is to determine all the pairs of Young diagrams one may obtain after adding a succession of $\theta-\theta'$ boxes to the pair of diagrams above, with no two boxes in the same column. This computation has already been done in \cite{mullerBT} Section 5, and leads to the claimed formula.
\end{proof}

\begin{corol}\label{EvenWeights}
The spectral sequence \eqref{spectral} degenerates in the second page and the resulting filtration on the abutment splits. The weights of the eigenvalues of the Frobenius action on $\mathrm H^{k}(S_{\theta})$ are  even and at most equal to $2\lfloor \frac{k}{2} \rfloor$.
\end{corol}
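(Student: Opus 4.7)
The plan is to leverage the explicit description of the Frobenius eigenvalues on $E_1^{\theta',i}$ coming from Lemma \ref{FirstTerms}, namely that they lie in the two-element set $\{q^i,-q^{i+1}\}$ (with $-q^{i+1}$ present only when $i\leq \theta'-2$). The key structural observation is that this set depends only on the row index $i$, and that rows with different $i$ carry disjoint eigenvalue sets: the positive entries $q^i$ have strictly different absolute values, the negative entries likewise, and the positive/negative entries can never agree in sign.

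First, to obtain degeneration at $E_2$, I would note that the differential $d_r: E_r^{\theta',i}\to E_r^{\theta'+r,i-r+1}$ moves from row $i$ to row $i-r+1$, which for $r\geq 2$ is a different row. Since $d_r$ is Frobenius-equivariant and the two rows share no common eigenvalue, $d_r$ must vanish identically. Hence the spectral sequence collapses at $E_2=E_\infty$.

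Next, to split the filtration on $\mathrm H^k(S_\theta)$, I would exploit that Frobenius acts semi-simply on $E_1$ (inherited from the Coxeter variety cohomology via the exact Harish-Chandra induction functor), and therefore on the subquotient $E_\infty$. The successive quotients $E_\infty^{\theta',k-\theta'}$ as $\theta'$ varies lie in disjoint rows, hence carry pairwise disjoint Frobenius eigenvalues. A comparison of dimensions of generalised eigenspaces on the filtered module with those on the associated graded then forces Frobenius to act semi-simply on $\mathrm H^k(S_\theta)$, and its eigenspace decomposition yields a canonical splitting of the filtration.

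Finally, the weight statement is read off directly: $q^i$ has weight $2i$ and $-q^{i+1}$ has weight $2(i+1)$, both even. For a contribution to degree $k=\theta'+i$, the constraint $0\leq i\leq \theta'$ gives $i\leq \lfloor k/2\rfloor$, so $2i\leq 2\lfloor k/2\rfloor$; the second eigenvalue only occurs when $i\leq \theta'-2$, which sharpens this to $i+1\leq \lfloor k/2\rfloor$ and hence $2(i+1)\leq 2\lfloor k/2\rfloor$. The entire argument is bookkeeping downstream of Lemma \ref{FirstTerms}, and I do not anticipate a genuine obstacle; the only point requiring care is checking that semi-simplicity of Frobenius and the disjointness of eigenvalues across graded pieces together do yield a splitting, which is a standard fact about filtered modules with a semi-simple automorphism.
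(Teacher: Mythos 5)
Your proposal is correct and matches the paper's own proof: degeneration at $E_2$ from the row-dependence of Frobenius eigenvalues, splitting of the filtration via disjointness of eigenvalues on graded pieces together with semisimplicity of $F$ on $E_1$, and the weight bound read off from the constraints $0\leq i\leq\theta'$ (resp.\ $i\leq\theta'-2$) with $k=\theta'+i$. The only difference is that you spell out the filtered-module splitting argument and the exponent bookkeeping in slightly more detail than the paper does, which is harmless.
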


Recall that an eigenvalue $\alpha \in \overline{\mathbb Q_{\ell}}$ of the Frobenius on the cohomology of a variety defined over $\mathbb F_q$ is said to be of weight $w\in \mathbb Z$ if, for any isomorphism $\iota:\overline{\mathbb Q_{\ell}} \simeq \mathbb C$, we have $|\iota(\alpha)| = q^{\frac{w}{2}}$. 

\begin{proof}
By the previous lemma, two terms of the sequence lying on different rows have no common eigenvalues for the Frobenius morphism. The Frobenius equivariance of the differentials force them to vanish in pages after $E_1$. Therefore the spectral sequence degenerates in the second page. For any $0 \leq k \leq 2\theta$, we deduce the existence of a filtration $\mathrm{Fil}^{\bullet}$ on $\mathrm H^k(S_{\theta})$ such that the graded pieces $\mathrm{Gr}^p := \mathrm{Fil}^p / \mathrm{Fil}^{p+1}$ are isomorphic to $E_2^{p,k-p}$. In particular the non-zero graded pieces are concentrated in degree $k \leq 2p \leq 2\min(k,\theta)$. Each term $E_2^{p,k-p}$ is a subquotient of $E_1^{p,k-p}$. The Frobenius acts semisimply on the latter space with at most $2$ distinct eigenvalues, which are $q^{k-p}$, and $-q^{k-p+1}$ if $k+2 \leq 2p$. Since there is no common eigenvalue of the Frobenius in two different graded pieces, the filtration splits and the Frobenius acts semi-simply on $\mathrm H^k(S_{\theta})$. Moreover, the eigenvalues form a subset of $\{q^i, -q^{j} \,|\, 0 \leq i,j \leq \lfloor\frac{k}{2}\rfloor\}$.
\end{proof}

Analyzing the $\mathrm{Sp}(2\theta,\mathbb F_q)$-action, we may decompose each term $E_1^{\theta',i}$ in the following way,
\begin{align*}
A^{\theta',i} = A^{\theta',i}_0 \oplus A^{\theta',i}_1, & & B^{\theta',i} = B^{\theta',i}_0 \oplus B^{\theta',i}_1,
\end{align*}
where for $\epsilon = 0,1$ the term $A^{\theta',i}_{\epsilon}$ is the sum of all the irreducible components $\rho_{0,\alpha,\beta}$ with the partition $\beta$, written as $\beta = (\beta_1 \geq \ldots \geq \beta_r > 0)$, satisfies $r = \theta' + \epsilon - i$, and if $i \leq \theta'-2$ the term $B^{\theta',i}_{\epsilon}$ is the sum of all the irreducible components $\rho_{1,\gamma,\delta}$ with the partition $\delta$, written as $\delta = (\delta_1 \geq \ldots \geq \delta_s > 0)$, satisfies $s = \theta' - 2 + \epsilon - i$. We observe that $A^{\theta,i}_1 = B^{\theta,i}_1 = 0$, and for $0 \leq i \leq \theta' < \theta$ (resp. $0 \leq j + 2 \leq \theta' < \theta$) we have isomorphisms $A^{\theta',i}_1 \simeq A^{\theta'+1,i}_0 $ and $B^{\theta',j}_1 \simeq B^{\theta'+1,j}_0$. Consider a differential 
$$d^{\theta',i}: E_1^{\theta',i} \to E_1^{\theta'+1,i}.$$
Since the differentials are Frobenius equivariant, they decompose as a sum $d^{\theta',i} = d_{A}^{\theta',i} \oplus d_{B}^{\theta',i}$ where 
\begin{align*}
d_{A}^{\theta',i}: A^{\theta',i} \to A^{\theta'+1,i}, & & d_{B}^{\theta',i}: B^{\theta',i} \to B^{\theta'+1,i}.
\end{align*}
The $\mathrm{Sp}(2\theta,\mathbb F_q)$-equivariance then forces
\begin{align*}
\mathrm{Im}(d_{A}^{\theta'-1,i}) \subset A_0^{\theta',i} \subset \mathrm{Ker}(d_{A}^{\theta',i}), & & \mathrm{Im}(d_{B}^{\theta'-1,i}) \subset B_0^{\theta',i} \subset \mathrm{Ker}(d_{B}^{\theta',i}).
\end{align*}

In order to help visualize the situation, the page $E_1$ is drawn in Figure \ref{Figure1}. Since the sequence degenerates in $E_2$ and the resulting filtration splits, it is clear that for all $0 \leq i \leq \theta$, the cohomology group $\mathrm H^{2i}(S_{\theta})$ contains $A_0^{i,i} \oplus B^{i+1,i-1}_0$ (the term $B^{i+1,i-1}_0$ is non-zero if and only if $0 < i < \theta$). We point out that in Theorem \ref{MainTheorem}, (2) can be rephrased as $\mathrm H^{2i}(S_{\theta}) \simeq A_0^{i,i} \oplus B^{i+1,i-1}_0$ for all $0 \leq i \leq \theta$ . 

\begin{lem}\label{Equivalence}
In the statement of Theorem \ref{MainTheorem}, (1) is equivalent to (2).
\end{lem}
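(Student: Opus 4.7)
The plan is to deduce the equivalence by a single Euler characteristic computation, combined with the inclusion $\mathrm H^{2i}(S_\theta) \supseteq A_0^{i,i} \oplus B_0^{i+1,i-1}$ recorded immediately before the lemma.

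First, since the spectral sequence \eqref{spectral} degenerates at $E_2$ and the resulting filtration splits, one has $\dim \mathrm H^k(S_\theta) = \sum_{\theta'+i=k} \dim E_2^{\theta',i}$, and therefore
$$\chi(S_\theta) = \sum_{\theta',i} (-1)^{\theta'+i} \dim E_2^{\theta',i} = \sum_{\theta',i} (-1)^{\theta'+i} \dim E_1^{\theta',i},$$
the second equality using the preservation of Euler characteristic upon passing to cohomology. The Frobenius-equivariant splitting $E_1^{\theta',i} = A^{\theta',i} \oplus B^{\theta',i}$ is compatible with the differentials, so I obtain, for each $i$, two independent row complexes $(A^{\bullet,i}, d_A)$ and $(B^{\bullet,i}, d_B)$.

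Next I would compute $\chi(A^{\bullet,i})$ and $\chi(B^{\bullet,i})$. The shift isomorphisms $A_1^{\theta',i} \simeq A_0^{\theta'+1,i}$ for $\theta'<\theta$, together with $A_1^{\theta,i} = 0$ (and the analogous statements for $B$), make these complexes telescoping. A direct bookkeeping calculation yields
$$\chi(A^{\bullet,i}) = (-1)^i \dim A_0^{i,i}, \qquad \chi(B^{\bullet,i}) = (-1)^i \dim B_0^{i+2,i}.$$
Plugging these in, weighting by $(-1)^i$, and reindexing the $B$-contribution via $j = i+1$, I get
$$\chi(S_\theta) = \sum_{i=0}^{\theta} \dim \bigl( A_0^{i,i} \oplus B_0^{i+1,i-1} \bigr),$$
with the conventions that $B_0^{1,-1} = 0$ and $B_0^{\theta+1,\theta-1} = 0$.

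Finally, I compare this with the tautological identity $\chi(S_\theta) = \sum_i \dim \mathrm H^{2i}(S_\theta) - \sum_i \dim \mathrm H^{2i+1}(S_\theta)$. Setting $c_i := \dim \mathrm H^{2i}(S_\theta) - \dim(A_0^{i,i} \oplus B_0^{i+1,i-1}) \geq 0$ (nonnegative by the inclusion mentioned above) and $d_i := \dim \mathrm H^{2i+1}(S_\theta) \geq 0$, one obtains $\sum_i c_i = \sum_i d_i$. Since each side is a sum of nonnegative integers, they vanish together. But statement (2) is precisely the assertion $c_i = 0$ for all $i$, while statement (1) is precisely $d_i = 0$ for all $i$; hence the two are equivalent.

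The only point requiring care is the telescoping computation of $\chi(A^{\bullet,i})$ and $\chi(B^{\bullet,i})$, where the boundary terms at $\theta' = \theta$ must be handled correctly; but given the explicit shift isomorphisms already recorded in the text, this reduces to a mechanical cancellation and poses no real conceptual obstacle. The main content of the lemma is the observation that such a simple Euler characteristic argument bypasses any detailed knowledge of the individual differentials $d_A^{\theta',i}$ and $d_B^{\theta',i}$.
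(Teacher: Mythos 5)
Your Euler-characteristic argument is correct and gives a clean proof. It is a genuinely different route from the paper's. The paper works term-by-term: it notes $\mathrm{Im}(d_A^{\theta'-1,i})\subset A_0^{\theta',i}\subset\mathrm{Ker}(d_A^{\theta',i})$, applies rank--nullity together with the isomorphisms $A_1^{\theta',i}\simeq A_0^{\theta'+1,i}$ to show that $\mathrm{Im}(d_A^{\theta'-1,i})=A_0^{\theta',i}$ is equivalent to $\mathrm{Ker}(d_A^{\theta'-1,i})=A_0^{\theta'-1,i}$, and then characterizes the vanishing of an individual $E_2^{\theta',i}$ by a conjunction of four such equalities at neighbouring indices (plus the analogous statements for $B$). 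The equivalence of (1) and (2) then follows by propagating these interlocking conditions along each row complex. Your approach instead sums a single numerical invariant: the telescoping yields $\chi(A^{\bullet,i})=(-1)^i\dim A_0^{i,i}$ and $\chi(B^{\bullet,i})=(-1)^i\dim B_0^{i+2,i}$, which combined with $\chi(S_\theta)=\sum_k(-1)^k\dim\mathrm H^k(S_\theta)$ and the inclusion $\mathrm H^{2i}(S_\theta)\supseteq A_0^{i,i}\oplus B_0^{i+1,i-1}$ gives $\sum_i c_i=\sum_i d_i$ with all terms nonnegative, hence both sides vanish simultaneously. What the paper's approach buys is finer local information (it pins down each $E_2^{\theta',i}$ individually, not just a global count), which is implicitly useful for the later parts of the argument; what your approach buys is that it sidesteps any case analysis of differentials and boundary positions, reducing the equivalence to one identity between integers. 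Both are valid, and your version is arguably the more transparent justification of the paper's remark that ``the lemma follows easily.''
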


\begin{proof}
Let us fix $0\leq i \leq \theta' \leq \theta$. The equality $\mathrm{Im}(d_{A}^{\theta'-1,i}) = A_0^{\theta',i}$ (resp. $\mathrm{Im}(d_{B}^{\theta'-1,i}) = B_0^{\theta',i}$) is equivalent to $\mathrm{Ker}(d_{A}^{\theta'-1,i}) = A_0^{\theta'-1,1}$ (resp. $\mathrm{Ker}(d_{B}^{\theta'-1,i}) = B_0^{\theta'-1,i}$). Thus, the vanishing of a term $E_2^{\theta',i} = \mathrm{Ker}(d_{A}^{\theta',i})/\mathrm{Im}(d_{A}^{\theta'-1,i}) \oplus \mathrm{Ker}(d_{B}^{\theta',i})/\mathrm{Im}(d_{B}^{\theta'-1,i})$ is equivalent to the equalities 
\begin{align*}
\mathrm{Ker}(d_{A}^{\theta'-1,i}) = A_0^{\theta'-1,i}, & &  \mathrm{Im}(d_A^{\theta',i}) = A_0^{\theta'+1,i}, \\ 
\mathrm{Ker}(d_{B}^{\theta'-1,i}) = B_0^{\theta'-1,i}, & &  \mathrm{Im}(d_B^{\theta',i}) = B_0^{\theta'+1,i}.
\end{align*}
The lemma follows easily.
\end{proof}

\section{Desingularization of $S_{\theta}$ and purity of the Frobenius action on cohomology}
\label{Section5}

In order to make out how the spectral sequence simplifies in the second page, it is necessary to get more information on the expected weights of the Frobenius on the abutment. To this end, we introduce the blow-up $\pi:S_{\theta}'\to S_{\theta}$ at its singular points. We denote by $E := \pi^{-1}(Z)$ the exceptional divisor, where in the notations of Proposition \ref{RationalPointsStheta}, $Z = X_I(\mathrm{id})$ is the singular locus of $S_{\theta}$. Recall that $\dim Z = 0$. In this section, we prove the following Proposition.

\begin{prop}\label{SmoothBlowup}
The varieties $S_{\theta}'$ and $E$ are smooth.
\end{prop}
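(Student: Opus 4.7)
My plan is to work locally at each singular point and reduce smoothness of $S_{\theta}'$ and $E$ to the known desingularization of a symmetric determinantal variety. The singular locus $Z = X_I(\mathrm{id})$ consists of the $\tau$-fixed (i.e. $\mathbb F_q$-rational) Lagrangians, on which $\mathrm{Sp}(2\theta,\mathbb F_q)$ acts transitively by Witt's theorem. Since blow-up is compatible with the group action, it suffices to analyze one fixed singular point $U_0$.

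The next step is to set up a standard affine chart of the Lagrangian Grassmannian $L(V)$ centered at $U_0$. Choosing a Lagrangian complement $U_0'$ which is also $\tau$-stable (possible because $U_0$ is defined over $\mathbb F_q$), the open subvariety of Lagrangians transverse to $U_0'$ is identified with the space $\mathrm{Sym}_{\theta}$ of symmetric $\theta \times \theta$ matrices $M$, with $M = 0$ corresponding to $U_0$. In this chart, the semi-linear map $\tau$ acts by $M \mapsto M^{(q)}$, so the defining condition $\dim(U \cap \tau(U)) \geq \theta - 1$ becomes $\mathrm{rank}(M - M^{(q)}) \leq 1$. Therefore the chart of $S_{\theta}$ around $U_0$ is $\varphi^{-1}(D_{\leq 1})$, where $\varphi: \mathrm{Sym}_{\theta} \to \mathrm{Sym}_{\theta}$ is the Lang-style map $M \mapsto M - M^{(q)}$ and $D_{\leq 1} := \{N \in \mathrm{Sym}_{\theta} : \mathrm{rank}\, N \leq 1\}$ is the symmetric determinantal variety. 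Because $M^{(q)}$ has vanishing differential in characteristic $p$, the differential of $\varphi$ is the identity, so $\varphi$ is finite étale; hence the local chart of $S_{\theta}$ at $U_0$ is a finite étale cover of $D_{\leq 1}$, with $U_0$ lying over the origin.

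To conclude, I would invoke the results of \cite{blowup} on desingularizations of symmetric determinantal varieties: a single blow-up at the origin resolves the singularities of $D_{\leq 1}$, producing a smooth total space whose exceptional fibre is the projective space $\mathbb P^{\theta-1}$ parametrizing lines spanned by rank-one symmetric matrices. Blow-up commutes with étale base change, so blowing up the chart at $U_0$ yields a variety which is étale over the blow-up of $D_{\leq 1}$ at $0$, and whose fibre over $U_0$ is étale over $\mathbb P^{\theta-1}$. Since smoothness descends along étale maps, both $S_{\theta}'$ and $E$ are smooth in a neighborhood of the preimage of $U_0$. Assembling these local statements over all points of $Z$, using the transitive symplectic action, proves the proposition.

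The main obstacle will be the explicit linear-algebraic identification of the local model with the determinantal variety: one must choose the complementary Lagrangian $U_0'$ carefully so that the $\tau$-action in the chart really is $M \mapsto M^{(q)}$, and verify that the symplectic condition on Lagrangians translates precisely into symmetry of $M$ and that the intersection condition transcribes into the rank condition on $M - M^{(q)}$. Once this bookkeeping is done, the rest of the argument follows formally from the étale-local nature of smoothness and the cited desingularization of $D_{\leq 1}$.
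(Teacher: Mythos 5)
Your proposal is correct and follows essentially the same strategy as the paper: identify an affine chart around a singular point as a finite \'etale cover (via the Lang map $M\mapsto M-M^{(q)}$) of the symmetric determinantal variety of rank $\leq 1$, invoke Theorem B of \cite{blowup} for smoothness of the blow-up of that determinantal variety at the origin, and descend smoothness along the \'etale map using compatibility of blow-up with flat base change. The only cosmetic difference is that you reduce to a single singular point by the transitivity of the $\mathrm{Sp}(2\theta,\mathbb F_q)$-action, whereas the paper simply treats an arbitrary singular point directly; also note that in the chart the full singular locus is the set of all $\mathbb F_q$-points of $\mathrm{Sym}_{\theta}$ (not only $U_0$), so one should either shrink the neighborhood to isolate $U_0$ or observe, as the paper does, that the Lang map sends precisely this singular locus to $0$ and blow up all of it at once.
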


Since $S_{\theta}'\setminus E$ is isomorphic to the smooth locus of $S_{\theta}$, it is enough to prove that the blow-up is smooth at points of the exceptional divisor. To this end, we exhibit a certain affine neighborhood of any singular point of $S_{\theta}$. Recall the symplectic space $V$ introduced in the first section. Let $L(\theta)$ denote the Lagrangian Grassmannian of $V$, ie. the smooth projective variety defined over $\mathbb F_q$ whose $k$-rational points, for any field extension $k/\mathbb F_q$, correspond to subspaces of $U\subset V_k$ such that $U^{\perp} = U$. Recall from Proposition \ref{RationalPointsStheta} that $S_{\theta}$ is the closed subvariety of $L(\theta)$ consisting of those $U$ such that $U\cap \tau(U) \overset{\leq 1}{\subset} U$.\\
Let $\mathrm{Sym}_{\theta} \simeq \mathbb A^{\frac{\theta(\theta+1)}{2}}$ denote the variety of $\theta\times\theta$ symmetric matrices over $\mathbb F_q$. Let $V_{\theta}$ denote the closed subvariety consisting of all $M\in \mathrm{Sym}_{\theta}$ such that $M^{(q)} - M$ has rank at most one. 

\begin{lem}
Any singular point $U \in S_{\theta}$ has an open affine neighborhood isomorphic to $V_{\theta}$.
\end{lem}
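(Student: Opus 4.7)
The strategy is to use the standard affine chart of the Lagrangian Grassmannian $L(\theta)$ centered at an $\mathbb F_q$-rational Lagrangian, then rewrite the defining condition of $S_\theta$ inside this chart. Recall from Proposition \ref{RationalPointsStheta} that the singular locus of $S_\theta$ is $X_I(\mathrm{id})$, i.e.\ the set of Lagrangians $U_0$ satisfying $U_0 = \tau(U_0)$; in particular $U_0$ is defined over $\mathbb F_q$. Fix such a $U_0$.

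The first step is to pick a Lagrangian complement $U_0'$ to $U_0$ in $V$ which is also defined over $\mathbb F_q$ (such a complement exists by Witt's theorem over the finite field $\mathbb F_q$). The associated \emph{big cell} $\Omega_{U_0'}\subset L(\theta)$ consists of those Lagrangians $U\subset V_k$ (for any field extension $k/\mathbb F_q$) which are complementary to $U_0'\otimes k$. Any such $U$ is the graph of a unique linear map $f\colon U_0\otimes k \to U_0'\otimes k$, and the Lagrangian condition $U^\perp = U$ translates, via the symplectic identification $U_0' \simeq U_0^\vee$, into the symmetry of the matrix $M\in M_\theta(k)$ representing $f$ in a chosen basis of $U_0$ and its dual basis of $U_0'$. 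This yields an $\mathbb F_q$-isomorphism $\Omega_{U_0'} \xrightarrow{\sim} \mathrm{Sym}_\theta$ sending $U_0$ to the origin. Since $U_0$ and $U_0'$ are $\mathbb F_q$-rational, the Frobenius $\tau$ acts on $\Omega_{U_0'}(k)$ through $M\mapsto M^{(q)}$.

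The second step is to compute, for two Lagrangians $U,U'\in\Omega_{U_0'}(k)$ corresponding to symmetric matrices $M,M'$, the intersection dimension. Writing vectors as $v + f(v)$ with $v\in U_0\otimes k$, one checks immediately that $U\cap U' = \{v + Mv : (M-M')v = 0\}$, and hence $\dim(U\cap U') = \theta - \mathrm{rk}(M-M')$. Applying this to $U' = \tau(U)$, which corresponds to the matrix $M^{(q)}$, shows that the condition $U\cap\tau(U)\overset{\leq 1}{\subset}U$ defining $S_\theta$ inside $L(\theta)$ becomes $\mathrm{rk}(M^{(q)}-M)\leq 1$. Thus the affine open neighborhood $S_\theta\cap\Omega_{U_0'}$ of $U_0$ in $S_\theta$ is, by construction, isomorphic to $V_\theta$, which is the assertion of the lemma.

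The main thing to verify carefully is the parametrization of Lagrangians by symmetric matrices in this chart and the compatibility of the $\mathbb F_q$-structures, i.e.\ that with the chosen basis the Frobenius really acts by entry-wise $q$-th power. Once this set-up is in place, the rank computation is straightforward linear algebra and the identification with $V_\theta$ is immediate.
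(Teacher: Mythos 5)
Your proof is correct and follows essentially the same route as the paper: both pass to the standard affine chart of the Lagrangian Grassmannian given by symmetric matrices via the graph construction relative to an $\mathbb F_q$-rational isotropic complement, note that $\mathbb F_q$-rationality of the center makes the Frobenius act coordinate-wise by $M\mapsto M^{(q)}$, and then identify $\dim(U\cap\tau(U))$ with $\theta-\mathrm{rk}(M^{(q)}-M)$. The only cosmetic differences are that the paper cites a reference for the symmetric-matrix description of the chart where you argue it directly, and it sets up the chart for a general $k$-point before specializing to the $\mathbb F_q$-rational singular point.
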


\begin{proof}
If $U \in L(\theta)$ is a closed point defined over a finite extension $k/\mathbb F_q$, one may choose an isotropic supplement $U'$ so that we have a decomposition $V_k = U \oplus U'$. The symplectic pairing induces an identification between $U'$ and the $k$-linear dual of $U$. We may consider the affine variety $\mathrm{Hom}(U,U')$ defined over $\mathrm{Spec}(k)$, and the subvariety $\mathrm{Hom}(U,U')^{\mathrm{sym}}$ consisting of morphisms $\varphi \in \mathrm{Hom}(U,U')$ such that $\varphi^* = \varphi$, where $\varphi^*:U'^* \simeq U^{**} \simeq U \to U^*$ is the dual of $\varphi$. According to \cite{grassmann} Lemma 2.8, we have $\varphi \in \mathrm{Hom}(U,U')^{\mathrm{sym}}$ if and only $\Gamma_{\varphi}\in L(\theta)$ where $\Gamma_{\varphi}$ denotes the graph of $\varphi$. The assignment $\varphi \mapsto \Gamma_{\varphi}$ defines an open immersion $\mathrm{Hom}(U,U')^{\mathrm{sym}} \hookrightarrow L(\theta)$, identifying the former with an open affine neighborhood of $U$. We have an identification $\mathrm{Hom}(U,U')^{\mathrm{sym}} \simeq \mathrm{Sym}_{\theta} \otimes k$ upon fixing a basis of $U$ and equipping $U'$ with the dual basis.\\
Now assume that $U \in S_{\theta}$ is a singular point, equivalently an $\mathbb F_q$-rational point of $S_{\theta}$. Let us fix a basis $(f_i)_{1\leq i \leq \theta}$ of $U$ and equip $U'$ with the dual basis $(f_i')_{1\leq i \leq \theta}$. Let $M \in \mathrm{Sym}_{\theta}$. The graphs $\Gamma_M$ and $\Gamma_{M^{(q)}}$ are generated by the vectors 
\begin{align*}
g_j := f_j + \sum_{i=1}^{\theta} M_{ij}f_i', & & g_j^{(q)} = f_j + \sum_{i=1}^{\theta} M_{ij}^qf_i',
\end{align*}
respectively. A direct computation shows that the intersection $\Gamma_M \cap \Gamma_{M^{(q)}}$ is isomorphic to $\mathrm{Ker}(M^{(q)}-M)$. Since $U$ is defined over $\mathbb F_q$, the vectors $f_j$ and $f_j'$ are fixed by $\tau$, thus we have $\tau(\Gamma_M) = \Gamma_{M^{(q)}}$. It follows that $M \in S_{\theta} \cap \mathrm{Sym}_{\theta}$ if and only if $\dim \mathrm{Ker}(M^{(q)}-M) \geq \theta - 1$, ie. if and only if $M \in V_{\theta}$. 
\end{proof}

Let $V_{\theta}' \subset \mathrm{Sym}_{\theta}$ denote the subvariety of symmetric matrices $M$ of rank at most $1$. The variety $V_{\theta}'$ is known as a symmetric determinantal variety. It admits a single singular point corresponding to $M=0$.

\begin{prop}\label{BlowupDeterminantalVariety}
The blow-up of $V_{\theta}'$ at the point $M=0$ is smooth with smooth exceptional divisor.
\end{prop}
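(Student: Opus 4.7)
The plan is to recognise $V_{\theta}'$ as an affine cone over a smooth projective variety, and then invoke the classical description of the blow-up of a cone at its vertex. Since the ideal of $V_{\theta}'$ inside $k[\mathrm{Sym}_{\theta}] \cong k[m_{ij}]_{i \leq j}$ is generated by the $2\times 2$ minors of the generic symmetric matrix, and those minors are homogeneous of degree $2$, the variety $V_{\theta}'$ is an affine cone with apex at the origin. Its projectivisation $\mathbb{P}(V_{\theta}') \subset \mathbb{P}(\mathrm{Sym}_{\theta})$ identifies with the image $X$ of the second Veronese embedding $\nu: \mathbb{P}^{\theta-1} \to \mathbb{P}(\mathrm{Sym}_{\theta})$, $[v] \mapsto [v_iv_j]_{i \leq j}$, which is a closed immersion onto a smooth subvariety isomorphic to $\mathbb{P}^{\theta-1}$.

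Next, I would apply the standard description of the blow-up of an affine cone at its vertex. For any closed subvariety $X \subset \mathbb{P}^N$ with affine cone $C_X \subset \mathbb{A}^{N+1}$, the blow-up $\mathrm{Bl}_0\mathbb{A}^{N+1}$ coincides with the total space of $\mathcal{O}_{\mathbb{P}^N}(-1)$, and the strict transform of $C_X$ inside it is cut out, away from the zero section, by the condition $[v] \in X$. This strict transform is therefore isomorphic to the total space of $\mathcal{O}_X(-1) := \mathcal{O}_{\mathbb{P}^N}(-1)|_X$, it agrees with $\mathrm{Bl}_0 C_X$, and its intersection with the exceptional divisor of $\mathrm{Bl}_0 \mathbb{A}^{N+1}$ is precisely the zero section, itself isomorphic to $X$.

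Applying this principle to our situation where $X \cong \mathbb{P}^{\theta-1}$ is smooth, the blow-up $\mathrm{Bl}_0 V_{\theta}'$ is the total space of a line bundle over a smooth projective base, hence smooth, and its exceptional divisor is $X \cong \mathbb{P}^{\theta-1}$, which is also smooth. The only subtle point is verifying that the projectivisation of $V_{\theta}'$ coincides \emph{scheme-theoretically} with the Veronese image, i.e.\ that the $2\times 2$ minors of the generic symmetric matrix generate the full homogeneous ideal of $\nu(\mathbb{P}^{\theta-1})$. This is classical and can alternatively be extracted from the explicit blow-up computations for symmetric determinantal varieties in \cite{blowup} already cited by the authors; I expect this compatibility of ideals to be the one step that needs the most care, although once granted, the rest is essentially formal.
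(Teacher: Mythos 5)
Your proof is correct and self-contained, but it takes a genuinely different route from the paper's: the paper simply cites Theorem~B of \cite{blowup} (with $R_0 = \mathbb{F}_q$, $m = \theta$, $r = 2$), whereas you give a direct geometric argument. You recognise $V_\theta'$ as the affine cone over the quadratic Veronese image $\nu_2(\mathbb{P}^{\theta-1})$ and invoke the standard fact that blowing up an affine cone at its vertex produces the total space of $\mathcal{O}(-1)$ restricted to the projectivisation, with the zero section as exceptional divisor. This buys you something the bare citation does not: an explicit identification of $\mathrm{Bl}_0 V_\theta'$ as the total space of $\mathcal{O}_{\mathbb{P}^{\theta-1}}(-2)$ and of the exceptional divisor as $\mathbb{P}^{\theta-1}$. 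The scheme-theoretic point you flag at the end is real but benign: in characteristic $\neq 2$ the ideal of $2\times 2$ minors of a generic symmetric matrix is prime (Kutz; see also Bruns--Vetter), and it is the saturated defining ideal of the Veronese, so the cone is the reduced affine cone over a smooth projective variety as required. Since the paper works at $p > 2$, this hypothesis is satisfied, and your argument closes cleanly.
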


\begin{proof}
This is Theorem B of \cite{blowup} where, in their notations, we have $R_0 = \mathbb F_q$, $m = \theta$ and $r=2$.
\end{proof}

\begin{proof}[Proof of Proposition \ref{SmoothBlowup}]
The variety $V_{\theta}$ contains the $\mathbb F_q$-points of $\mathrm{Sym}_{\theta}$ and is singular precisely at them. By flat base change, the blow-up of $V_{\theta}$ at these singular points is an open neighborhood in $S_{\theta}'$ of the exceptional divisors above them. Thus it is enough to check smoothness of the blow-up of $V_{\theta}$. Moreover, the Lang map $M \mapsto M^{(q)} - M$ defines a finite étale cover $V_{\theta} \to V_{\theta}'$. By flat base change again, we are reduced to Proposition \ref{BlowupDeterminantalVariety}.
\end{proof}

Proposition \ref{SmoothBlowup} has the following consequence regarding the cohomology of $S_{\theta}$. 

\begin{corol}\label{Purity}
For $0 \leq k \leq 2\theta$, the Frobenius action on $\mathrm H^k(S_{\theta})$ is pure of weight $2\lfloor \frac{k}{2} \rfloor$.
\end{corol}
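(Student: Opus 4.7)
The plan is to sharpen the weight bound of Corollary \ref{EvenWeights} by transferring purity information from the smooth resolution $\pi:S_\theta'\to S_\theta$ of Proposition \ref{SmoothBlowup}. First I would observe that $S_\theta'$ is smooth \emph{and} projective: smoothness is Proposition \ref{SmoothBlowup}, and projectivity of $S_\theta'$ follows from properness of $\pi$ together with projectivity of $S_\theta$. Likewise $E$ is smooth projective, being a smooth closed subscheme of $S_\theta'$ (of dimension $\theta-1$). Deligne's purity theorem then ensures that $\mathrm H^k(S_\theta')$ and $\mathrm H^k(E)$ are pure of weight $k$ for all $k$.

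The main tool is the Mayer--Vietoris long exact sequence attached to the Cartesian blow-up square with vertices $E$, $S_\theta'$, $Z$, $S_\theta$, which is a standard consequence of proper descent in $\ell$-adic cohomology:
\begin{equation*}
\ldots \to \mathrm H^{k-1}(E) \xrightarrow{\alpha} \mathrm H^k(S_\theta) \xrightarrow{\beta} \mathrm H^k(S_\theta')\oplus \mathrm H^k(Z) \to \mathrm H^k(E) \to \ldots
\end{equation*}
For $k\geq 1$ the term $\mathrm H^k(Z)$ vanishes since $\dim Z=0$, so extracting the induced short exact sequence
\begin{equation*}
0 \to \mathrm{Im}(\alpha) \to \mathrm H^k(S_\theta) \to \mathrm{Im}(\beta) \to 0
\end{equation*}
exhibits $\mathrm H^k(S_\theta)$ as an extension of a subspace of $\mathrm H^k(S_\theta')$ (pure of weight $k$) by a quotient of $\mathrm H^{k-1}(E)$ (pure of weight $k-1$). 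In particular, every Frobenius weight on $\mathrm H^k(S_\theta)$ lies in $\{k-1,k\}$ whenever $k\geq 1$.

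Combining this bound with Corollary \ref{EvenWeights}, which forces the weights to be even and at most $2\lfloor k/2\rfloor$, leaves exactly one possibility in each parity: when $k$ is even the weight is $k=2\lfloor k/2\rfloor$, and when $k$ is odd the weight is $k-1=2\lfloor k/2\rfloor$. The case $k=0$ is automatic, since $S_\theta$ is irreducible and hence $\mathrm H^0(S_\theta)\simeq \overline{\mathbb Q_\ell}$ is pure of weight zero. The only delicate point is invoking the proper descent long exact sequence in the desired weight-compatible form; once $S_\theta'$ and $E$ are known to be smooth projective this is immediate, so no serious additional obstacle is expected.
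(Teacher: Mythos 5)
Your proposal is correct and follows essentially the same route as the paper: both invoke the blow-up long exact sequence for $(E, S_\theta', Z, S_\theta)$ (Stacks Lemma 0EW3), use Deligne purity for the smooth projective $S_\theta'$ and $E$, and combine with the even-weight constraint of Corollary \ref{EvenWeights}. The only cosmetic difference is order of operations — the paper uses evenness first to kill the odd-degree connecting maps and then reads purity off the resulting four-term exact sequences, whereas you first derive the two-weight bound $\{k-1,k\}$ from the short exact sequence and then intersect with the parity constraint — but the content is identical.
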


\begin{proof}
By \cite[\href{https://stacks.math.columbia.edu/tag/0EW3}{Lemma 0EW3}]{stacks-project}, there is a long exact sequence 
$$\ldots \to \mathrm H^k(S_{\theta}) \to \mathrm H^k(S_{\theta}') \oplus \mathrm H^k(Z) \to \mathrm H^k(E) \to \mathrm H^{k+1}(S_{\theta}) \to \ldots $$
Since $S_{\theta}'$ and $E$ are projective and smooth, the Frobenius action on their $k$-th cohomology group is pure of weight $k$. For any $0 \leq j \leq \theta$, the maps $\mathrm H^{2j-1}(E) \to \mathrm H^{2j}(S_{\theta})$ and $\mathrm H^{2j+1}(S_{\theta}) \to \mathrm H^{2j+1}(S_{\theta}')$ vanish since the cohomology of $S_{\theta}$ only contains even weights by Corollary \ref{EvenWeights}. Thus, we have in fact exact sequences 
$$0 \to \mathrm H^{2j}(S_{\theta}) \to \mathrm H^{2j}(S_{\theta}') \oplus \mathrm H^{2j}(Z) \to \mathrm H^{2j}(E) \to \mathrm H^{2j+1}(S_{\theta}) \to 0.$$
It follows that $\mathrm H^{2j}(S_{\theta})$ and $\mathrm H^{2j+1}(S_{\theta})$ are pure of weight $2j$.
\end{proof}

\section{Intersection cohomology of $S_{\theta}$}

Let $U := S_{\theta} \setminus Z$ denote the smooth locus of $S_{\theta}$. Let $j:U \hookrightarrow S_{\theta}$ be the open immersion. The \textbf{intersection complex} of $S_{\theta}$ is the intermediate image $j_{!*}\overline{\mathbb Q_{\ell}}$. We write $\mathrm{IH}^{\bullet}(S_{\theta}) := \mathrm H^{\bullet}(S_{\theta},j_{!*}\overline{\mathbb Q_{\ell}})$ for the intersection cohomology of $S_{\theta}$. It is a standard fact that $j_{!*}\overline{\mathbb Q_{\ell}}$ is pure of weight $0$, and therefore $\mathrm{IH}^k(S_{\theta})$ is pure of weight $k$ for all $k$. Since $Z$ is $0$-dimensional, it follows from \cite{perverse} Proposition 2.1.11 that the intersection complex is given by 
$$j_{!*}\overline{\mathbb Q_{\ell}} \simeq \tau_{\theta-1}\mathrm Rj_*\overline{\mathbb Q_{\ell}}.$$
The hypercohomology spectral sequence for intersection cohomology reads 
\begin{equation}\label{spectralbis}
F_2^{a,b} = \mathrm H^{a}(S_{\theta},\mathrm H^b(j_{!*}\overline{\mathbb Q_{\ell}})) \implies \mathrm{IH}^{a+b}(S_{\theta}).
\tag{$F$}
\end{equation}
For $k\geq 1$, the sheaf $\mathrm R^kj_{*}\overline{\mathbb Q_{\ell}}$ is skyscraper at the points of $Z$. Furthermore we have $j_*\overline{\mathbb Q_{\ell}} = \overline{\mathbb Q_{\ell}}$ since $S_{\theta}$ is irreducible and normal. It follows that 
$$
F_2^{a,b} = \begin{cases}
\mathrm H^{a}(S_{\theta}) & \text{if } b=0,\\
\bigoplus_{\overline{z}\in Z} (\mathrm R^bj_{*}\overline{\mathbb Q_{\ell}})_{\overline{z}} & \text{if } a = 0 \text{ and } 1 \leq b \leq \theta-1,\\
0 & \text{else.}
\end{cases}
$$
In particular, the spectral sequence degenerate in $F_{\theta+1}$, and we have $\mathrm H^k(S_{\theta}) = \mathrm{IH}^k(S_{\theta})$ for all $k > \theta$. The second page $F_2$ is drawn in Figure \ref{Figure2}. 

\begin{prop}\label{CohomologyUpperHalf}
For $k > \theta$, we have 
$$\mathrm H^k(S_{\theta}) \simeq 
\begin{cases}
A_0^{i,i}\oplus B_0^{i+1,i-1} & \text{if } k = 2i \text{ is even},\\
0 & \text{if } k \text{ is odd}.
\end{cases}$$
The statement also holds when $k = \theta$ is even.
\end{prop}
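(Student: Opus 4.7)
The plan is to combine the comparison $\mathrm H^k(S_\theta) \simeq \mathrm{IH}^k(S_\theta)$ for $k > \theta$ (coming from the degeneration of the hypercohomology spectral sequence \eqref{spectralbis} above the middle degree) with the purity statements of Corollaries \ref{EvenWeights} and \ref{Purity}, together with the combinatorial structure of the differentials in \eqref{spectral} that was developed earlier in this section.

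First I would handle the odd case. For $k > \theta$ odd, the comparison above gives $\mathrm H^k(S_\theta) \simeq \mathrm{IH}^k(S_\theta)$, which is pure of weight $k$. Corollary \ref{EvenWeights} says that the Frobenius weights on $\mathrm H^k(S_\theta)$ are all even, giving an immediate contradiction unless $\mathrm H^k(S_\theta) = 0$.

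Next I treat $k = 2i$ even with $k \geq \theta$ (covering both $k > \theta$ and the extra case $k = \theta$ with $\theta$ even). By Corollary \ref{Purity}, $\mathrm H^{2i}(S_\theta)$ is pure of weight $2i$. Since \eqref{spectral} degenerates at $E_2$ and the filtration on the abutment splits, $\mathrm H^{2i}(S_\theta)$ is the direct sum of all terms $E_2^{\theta',j}$ with $\theta'+j = 2i$. By Lemma \ref{FirstTerms}, the Frobenius acts on the summand $A^{\theta',j}$ (resp.\ $B^{\theta',j}$) of $E_1^{\theta',j}$ by $q^j$ (resp.\ $-q^{j+1}$), of weight $2j$ (resp.\ $2j+2$). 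Purity of weight $2i$ then forces $E_2^{\theta',j}(A) = 0$ except when $\theta' = i$, and $E_2^{\theta',j}(B) = 0$ except when $\theta' = i+1$. Hence $\mathrm H^{2i}(S_\theta) = E_2^{i,i}(A) \oplus E_2^{i+1, i-1}(B)$.

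To identify these surviving terms with $A_0^{i,i}$ and $B_0^{i+1,i-1}$, I exploit the vanishing of $\mathrm H^{2i+1}(S_\theta)$ coming from the first step (note that $2i+1 > \theta$ in every case at hand). This forces every $E_2^{\theta',j}$ with $\theta'+j = 2i+1$ to vanish, in particular $E_2^{i+1,i}(A) = 0$ and $E_2^{i+2, i-1}(B) = 0$. The first of these means $\mathrm{Im}(d_A^{i,i}) = A_0^{i+1,i}$; since $d_A^{i,i}$ kills $A_0^{i,i}$ and the dimension-matching isomorphism $A_1^{i,i} \simeq A_0^{i+1,i}$ then forces the restriction of $d_A^{i,i}$ to $A_1^{i,i}$ to be an isomorphism onto $A_0^{i+1,i}$, we conclude $\mathrm{Ker}(d_A^{i,i}) = A_0^{i,i}$. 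Combined with the triviality of the incoming differential (the entry $A^{i-1,i}$ vanishes since $i > i-1$), this yields $E_2^{i,i}(A) = A_0^{i,i}$. The analogous argument, using $E_2^{i+2, i-1}(B) = 0$ and the vanishing of $B^{i,i-1}$, gives $E_2^{i+1,i-1}(B) = B_0^{i+1,i-1}$. The main subtlety is precisely this final bootstrap, where the vanishing of terms on the next diagonal is converted into exact kernel and image statements for the differentials via the $(A_0, A_1)$ and $(B_0, B_1)$ decompositions; once this is in place the conclusion is formal.
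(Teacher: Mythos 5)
Your proposal is correct and follows essentially the same route as the paper's own proof: the odd case via the comparison $\mathrm H^k \simeq \mathrm{IH}^k$ above the middle degree together with Corollary \ref{EvenWeights}, and the even case via the purity of Corollary \ref{Purity} to isolate $E_2^{i,i}(A)$ and $E_2^{i+1,i-1}(B)$, then bootstrapping from the vanishing of $\mathrm H^{2i+1}(S_\theta)$ and the isomorphisms $A_1^{i,i}\simeq A_0^{i+1,i}$, $B_1^{i+1,i-1}\simeq B_0^{i+2,i-1}$ to identify the surviving kernels with $A_0^{i,i}$ and $B_0^{i+1,i-1}$. The only cosmetic difference is that you phrase the last step through the vanishing of the terms $E_2^{i+1,i}(A)$ and $E_2^{i+2,i-1}(B)$ on the diagonal $\theta'+j=2i+1$, whereas the paper states the resulting isomorphism of the differentials directly, but the underlying reasoning is identical.
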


\begin{proof}
If $k > \theta$ is odd, the cohomology group $\mathrm H^k(S_{\theta}) = \mathrm{IH}^k(S_{\theta})$ is pure of weight $k$. Since the cohomology of $S_{\theta}$ consists of only even weights by Proposition \ref{EvenWeights}, we have $\mathrm H^k(S_{\theta}) = 0$. Assume now that $k = 2i \geq \theta$ is even. Given the repartition of the Frobenius weights in the first page of the spectral sequence \eqref{spectral}, we have $H^{2i}(S_{\theta}) \simeq \mathrm{Ker}(d_A^{i,i}) \oplus \mathrm{Ker}(d_B^{i+1,i-1})$, the first (resp. second) summand corresponding to the eigenvalue $q^i$ (resp. $-q^i$) of the Frobenius. Furthermore, since $H^{2i+1}(S_{\theta}) = 0$, the restriction of $d_A^{i,i}$ to $A_1^{i,i}$ (resp. of $d_B^{i+1,i-1}$ to $B_1^{i+1,i-1}$) defines an isomorphism onto $A_0^{i+1,i}$ (resp. onto $B_0^{i+2,i-1}$). Thus the kernel of $d_A^{i,i}$ (resp. of $d_B^{i+1,i-1}$) is reduced to $A_0^{i,i}$ (resp. to $B_0^{i+1,i-1}$).
\end{proof}

It remains to compute the cohomology of $S_{\theta}$ up to the middle degree. To do so, for $1 \leq k \leq \theta-1$ we consider the differential $\delta_k:F_{k+1}^{0,k} \to F_{k+1}^{k+1,0}$ in the $(k+1)$-th page of the spectral sequence \eqref{spectralbis}. We note that $F_{k+1}^{0,k} = F_2^{0,k}$ and $F_{k+1}^{k+1,0} = F_2^{k+1,0}$ since, up to the $(k+1)$-th page, both terms have not been touched by any non zero differential.

\begin{prop}\label{DifferentialsVanish}
The differential $\delta_k$ vanishes for odd $k$ and is surjective for even $k$.
\end{prop}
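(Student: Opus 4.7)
The plan is to handle the two claims separately by means of Frobenius weights. Since the intersection complex $j_{!*}\overline{\mathbb Q_\ell}$ is pure of weight $0$ and $S_\theta$ is projective, $\mathrm{IH}^n(S_\theta)$ is pure of weight $n$, and the spectral sequence \eqref{spectralbis} puts $\mathrm{Coker}(\delta_k) = F_\infty^{k+1,0}$ at the bottom of the filtration on $\mathrm{IH}^{k+1}(S_\theta)$, exhibiting it both as a subobject of $\mathrm{IH}^{k+1}(S_\theta)$ (pure of weight $k+1$) and as a quotient of $F_2^{k+1,0} = \mathrm H^{k+1}(S_\theta)$ (pure of weight $2\lfloor (k+1)/2 \rfloor$ by Corollary \ref{Purity}). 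When $k$ is even these two weights are $k+1$ and $k$ respectively, which forces $\mathrm{Coker}(\delta_k) = 0$: this settles the surjectivity claim.

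For the vanishing claim when $k$ is odd, the weight consideration degenerates (both candidate weights equal $k+1$), so instead I would show the stronger fact that the source $F_2^{0,k}$ itself already vanishes for every $1 \leq k \leq \theta - 1$. By Section \ref{Section5}, étale-locally at a singular point $z$ the variety $S_\theta$ is a finite étale cover of the symmetric determinantal variety $V_\theta'$, itself the affine cone over the second Veronese embedding of $\mathbb P^{\theta-1}$; in particular the blow-up at the apex has exceptional divisor $E \simeq \mathbb P^{\theta-1}$ with normal bundle $\mathcal O(-2)$. The stalk $(\mathrm R^k j_*\overline{\mathbb Q_\ell})_{\bar z}$ is then computed by the cohomology of the punctured cone $V_\theta' \setminus \{0\}$, which sits in a Gysin exact sequence
\begin{equation*}
\cdots \to \mathrm H^{k-2}(\mathbb P^{\theta-1})(-1) \xrightarrow{\cup(-2h)} \mathrm H^k(\mathbb P^{\theta-1}) \to \mathrm H^k(V_\theta' \setminus \{0\}) \to \mathrm H^{k-1}(\mathbb P^{\theta-1})(-1) \to \cdots
\end{equation*}
where $h$ denotes the hyperplane class. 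Because $2$ is invertible in $\overline{\mathbb Q_\ell}$ and cup product with $h$ is an isomorphism $\mathrm H^{2i}(\mathbb P^{\theta-1}) \simeq \mathrm H^{2i+2}(\mathbb P^{\theta-1})$ for $0 \leq i \leq \theta - 2$, the Gysin sequence forces $\mathrm H^k(V_\theta' \setminus \{0\}) = 0$ for $1 \leq k \leq 2\theta - 2$. Consequently $F_2^{0,k} = 0$ in the required range, and $\delta_k$ is trivially the zero map.

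The main obstacle is the local analysis at the singular points: a purely weight-theoretic argument does not suffice in the odd case, since $\mathrm H^{k+1}(S_\theta)$ and $\mathrm{IH}^{k+1}(S_\theta)$ both acquire weight $k+1$, so one really needs to access the stalks of $\mathrm R^k j_*\overline{\mathbb Q_\ell}$ via the explicit étale-local model of Section \ref{Section5} and the Gysin argument on the blow-up.
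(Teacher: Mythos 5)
Your argument for surjectivity when $k$ is even is precisely the paper's: $\mathrm{Coker}(\delta_k)=F_\infty^{k+1,0}$ is simultaneously a subobject of $\mathrm{IH}^{k+1}(S_\theta)$ (pure of odd weight $k+1$) and a quotient of $\mathrm H^{k+1}(S_\theta)$ (pure of even weight $k$ by Corollary \ref{Purity}), hence zero.

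For the vanishing when $k$ is odd, you are mistaken that the weight argument ``degenerates.'' It only degenerates if one insists on comparing $\mathrm H^{k+1}(S_\theta)$ to $\mathrm{IH}^{k+1}(S_\theta)$. The paper instead compares the \emph{source} and \emph{target} of $\delta_k$ directly: the source $F_2^{0,k}=\bigoplus_{\bar z\in Z}\mathcal H^k(j_{!*}\overline{\mathbb Q_\ell})_{\bar z}$ has weights $\leq k$, because $j_{!*}\overline{\mathbb Q_\ell}$ is pure of weight $0$ so $\mathcal H^k(j_{!*}\overline{\mathbb Q_\ell})$ is mixed of weights $\leq k$; while the target $F_2^{k+1,0}=\mathrm H^{k+1}(S_\theta)$ is pure of weight $k+1$ when $k$ is odd, again by Corollary \ref{Purity}. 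A Frobenius-equivariant map from weights $\leq k$ into pure weight $k+1$ must vanish. This is a two-line argument, uniform with the even case, and requires no local model at all.

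Your alternative route --- proving outright that $F_2^{0,k}=0$ for $1\leq k\leq\theta-1$ via the \'etale-local model from Section \ref{Section5} --- aims at a genuinely stronger statement (it is in fact the by-product proposition $j_{!*}\overline{\mathbb Q_\ell}\simeq\overline{\mathbb Q_\ell}$ proved at the very end of the paper), but as written it has a gap. The identification of $V_\theta'$ with the affine cone over the quadratic Veronese, and the Gysin computation showing $\mathrm H^k(V_\theta'\setminus\{0\})=0$ for $1\leq k\leq 2\theta-2$, are both fine. The missing step is the passage from the global punctured cone to the actual stalk: by definition $(\mathrm R^k j_*\overline{\mathbb Q_\ell})_{\bar z}=\mathrm H^k\bigl(\mathrm{Spec}(\mathcal O^{\mathrm{sh}}_{S_\theta,\bar z})\setminus\{\bar z\}\bigr)$, the cohomology of the \emph{punctured strictly henselian local scheme}, and you silently replace this by $\mathrm H^k(V_\theta'\setminus\{0\})$. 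In the complex-analytic topology this is the classical fact that the punctured cone deformation-retracts onto the link; in \'etale cohomology over $\overline{\mathbb F}_q$ it is a nontrivial statement that needs to be invoked explicitly, for example by exploiting the contracting $\mathbb G_m$-action on the cone (a specialization or hyperbolic-localization argument). Until that identification is supplied, the local computation does not establish the vanishing of $F_2^{0,k}$, and the odd-$k$ case of the proposition remains unproved in your proposal.
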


\begin{proof}
Assume that $k$ is odd. We have $F_2^{0,k} = \bigoplus_{\overline z \in Z} \mathrm H^{k}(j_{!*}\overline{\mathbb Q_{\ell}})_{\overline z}$. Since $j_{!*}\overline{\mathbb Q_{\ell}}$ is pure of weight $0$, the cohomology sheaf $\mathrm H^{k}(j_{!*}\overline{\mathbb Q_{\ell}})$ is mixed of weights $\leq k$. On the other hand, by Corollary \ref{Purity}, we know that $F_2^{0,k+1} = \mathrm H^{k+1}(S_{\theta})$ is pure of weight $k+1$. Therefore $\delta_k$ must vanish.\\
Assume now that $k$ is even. We know that $H^{k+1}(S_{\theta})$ is pure of even weight, whereas $\mathrm{IH}^{k+1}(S_{\theta})$ is pure of odd weight. Thus $\delta_k$ must be surjective. 
\end{proof}

\begin{proof}[Proof of Theorem \ref{MainTheorem}]
Let $k = 2i < \theta$ be even. Since the differential $\delta_{2i-1}$ vanishes, the term of coordinate $(2i,0)$ in the spectral sequence \eqref{spectralbis} is unchanged through the deeper pages. In particular, $\mathrm{IH}^{2i}(S_{\theta})$ contains a subspace isomorphic to $\mathrm H^{2i}(S_{\theta})$. Thus, we have  
$$\mathrm H^{2i}(S_{\theta}) \hookrightarrow \mathrm{IH}^{2i}(S_{\theta}) \simeq \mathrm{IH}^{2(\theta-i)}(S_{\theta})(\theta-2i),$$
where the isomorphism follows from the hard Lefschetz theorem for intersection cohomology. Since $2(\theta-i) > \theta$, the RHS is isomorphic to $A_0^{\theta-i,\theta-i}\oplus B_0^{\theta-i+1,\theta-i-1}$ by Proposition \ref{CohomologyUpperHalf}. But $A_0^{\theta-i,\theta-i} \simeq A_0^{i,i}$ and $B_0^{\theta-i+1,\theta-i-1} \simeq B_0^{i+1,i-1}$ as $\mathrm{Sp}(2\theta,\mathbb F_q)$-modules. As $\mathrm H^{2i}(S_{\theta})$ already contains $A_0^{i,i} \oplus B_0^{i+1,i-1}$, we actually have isomorphisms $\mathrm{IH}^{2i}(S_{\theta}) = \mathrm H^{2i}(S_{\theta}) \simeq A_0^{i,i} \oplus B_0^{i+1,i-1}$.\\
At this stage, we have proved statement (2) of Theorem \ref{MainTheorem}. According to Lemma \ref{Equivalence}, the proof is over.
\end{proof}

As a by-product, we have proved the following statement.

\begin{prop}
We have $j_{!*}\overline{\mathbb Q_{\ell}} \simeq \overline{\mathbb Q_{\ell}}$ and $\mathrm{IH}^k(S_{\theta}) = \mathrm H^k(S_{\theta})$ for all $k$.
\end{prop}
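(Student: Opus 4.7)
The plan is to observe that the proof of Theorem \ref{MainTheorem} already yields the equality $\mathrm H^k(S_\theta) = \mathrm{IH}^k(S_\theta)$ for every $k$: for $k > \theta$ this comes from the degeneration argument underlying Proposition \ref{CohomologyUpperHalf}; for $k < \theta$ it is extracted from the hard Lefschetz chain
$$\mathrm H^{2i}(S_\theta) \hookrightarrow \mathrm{IH}^{2i}(S_\theta) \simeq \mathrm{IH}^{2(\theta-i)}(S_\theta) = \mathrm H^{2(\theta-i)}(S_\theta)$$
combined with the vanishing of odd-degree cohomology; and the middle degree $k = \theta$ is handled within \eqref{spectralbis} itself, since $F_2^{0,\theta} = 0$ by truncation and the only potentially non-zero incoming differential $\delta_{\theta-1}$ either vanishes (when $\theta$ is even, by Proposition \ref{DifferentialsVanish}) or lands in $\mathrm H^\theta(S_\theta) = 0$ (when $\theta$ is odd). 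So it only remains to establish the sheaf-theoretic statement $j_{!*}\overline{\mathbb Q_\ell} \simeq \overline{\mathbb Q_\ell}$, which amounts to showing that $(\mathrm R^b j_* \overline{\mathbb Q_\ell})|_Z = 0$ for every $1 \leq b \leq \theta - 1$, equivalently $F_2^{0,b} = 0$ in \eqref{spectralbis}.

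The key step is to feed the equality $\mathrm H^k = \mathrm{IH}^k$ back into \eqref{spectralbis}. Since $F_2^{a,b}$ is supported on the two axes $a=0$ and $b=0$, the induced filtration on $\mathrm{IH}^k$ has only two possibly non-zero graded pieces, namely $F_\infty^{k,0} = \mathrm H^k(S_\theta)/\mathrm{Im}(\delta_{k-1})$ and $F_\infty^{0,k} = \ker(\delta_k)$. The equality therefore forces $\dim \ker(\delta_k) = \dim \mathrm{Im}(\delta_{k-1})$ for every $1 \leq k \leq \theta - 1$. Combining this identity with Proposition \ref{DifferentialsVanish} and part (1) of Theorem \ref{MainTheorem}, I would argue by parity. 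For odd $k$ we have $\delta_k = 0$, so $\ker(\delta_k) = F_2^{0,k}$; meanwhile $\delta_{k-1}$ maps into $F_2^{k,0} = \mathrm H^k(S_\theta) = 0$, forcing $\mathrm{Im}(\delta_{k-1}) = 0$ and hence $F_2^{0,k} = 0$. For even $k$, surjectivity of $\delta_k$ onto the vanishing space $F_2^{k+1,0} = \mathrm H^{k+1}(S_\theta) = 0$ gives $\ker(\delta_k) = F_2^{0,k}$, while $\delta_{k-1} = 0$ because $k-1$ is odd; so again $F_2^{0,k} = 0$.

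The vanishing of every $F_2^{0,b}$ for $b \geq 1$ is equivalent to the vanishing of the skyscraper sheaves $(\mathrm R^b j_* \overline{\mathbb Q_\ell})|_Z$ in that range. Combined with $j_* \overline{\mathbb Q_\ell} = \overline{\mathbb Q_\ell}$ (by normality and irreducibility of $S_\theta$, which was already invoked before \eqref{spectralbis}), this gives the quasi-isomorphism $j_{!*}\overline{\mathbb Q_\ell} \simeq \tau_{\leq \theta-1} \mathrm R j_* \overline{\mathbb Q_\ell} \simeq \overline{\mathbb Q_\ell}$. There is no substantive obstacle; the only subtle point is bookkeeping, namely to verify that the equality $\mathrm H^k = \mathrm{IH}^k$ is genuinely in hand for every $k$ \emph{before} running the dimension count, so as to avoid circularity at the middle degree where the spectral sequence was one of the ingredients used in its proof.
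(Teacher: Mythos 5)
Your argument is correct and follows the same route the paper intends: the paper dispatches the proposition in a single line by asserting that $\mathrm H^k(j_{!*}\overline{\mathbb Q_\ell}) = 0$ for $k \geq 1$ was ``incidentally proved,'' and your dimension count in the two-axis spectral sequence \eqref{spectralbis}, combined with the parity behaviour of the differentials $\delta_k$ from Proposition \ref{DifferentialsVanish} and the vanishing of odd-degree cohomology, is exactly the bookkeeping that makes that claim precise. You also correctly handle the middle degree $k=\theta$ and flag the potential circularity, so nothing is missing.
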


\begin{proof}
The natural map $\overline{\mathbb Q_{\ell}} \to j_{!*}\overline{\mathbb Q_{\ell}} \simeq \tau_{\theta-1}\mathrm Rj_*\overline{\mathbb Q_{\ell}}$ is a quasi-isomorphism, as we have incidentally proved that $\mathrm H^k(j_{!*}\overline{\mathbb Q_{\ell}}) = 0$ for all $k\geq 1$. 
\end{proof}

\phantomsection
\printbibliography[heading=bibintoc, title={Bibliography}]
\markboth{Bibliography}{Bibliography}

\appendix
\newgeometry{a4paper,left=1in,right=1in,top=1in,bottom=1in,nohead}
\begin{landscape}
\thispagestyle{empty}
\section{Figures}
\vfill
\begin{figure}[h]
\centering
\begin{tikzcd}[sep=small]
	\, & \, & \, & \, & \, & \, & A_0^{\theta,\theta}\\
	\, & \, & \, & \, & \, &  A_0^{\theta-1,\theta-1} \oplus A_1^{\theta-1,\theta-1}  \arrow{r} &  A_0^{\theta,\theta-1}\\
	\, & \, & \, & \, &  A_0^{\theta-2,\theta-2} \oplus A_1^{\theta-2,\theta-2} \arrow{r} &  A_0^{\theta-1,\theta-2} \oplus A_1^{\theta-1,\theta-2}  \arrow{r} &  A_0^{\theta,\theta-2} \oplus B_0^{\theta,\theta-2}\\
	\, & \, & \, & \reflectbox{$\ddots$} & \, & \, & \vdots\\
    \, & \, &  A_0^{2,2} \oplus A_1^{2,2}  \arrow{r} & \ldots \arrow{r} & \begin{array}{c} A_0^{\theta-2,2} \oplus A_1^{\theta-2,2} \\ B_0^{\theta-2,2} \oplus B_1^{\theta-2,2} \end{array} \arrow{r} & \begin{array}{c} A_0^{\theta-1,2} \oplus A_1^{\theta-1,2} \\ B_0^{\theta-1,2} \oplus B_1^{\theta-1,2} \end{array} \arrow{r} & A_0^{\theta,2} \oplus B_0^{\theta,2} \\
    \, &  A_0^{1,1} \oplus A_1^{1,1}  \arrow{r} & A_0^{2,1} \oplus A_1^{2,1} \arrow{r} & \ldots \arrow{r} &\begin{array}{c}  A_0^{\theta-2,1} \oplus A_1^{\theta-2,1} \\ B_0^{\theta-2,1} \oplus B_1^{\theta-2,1} \end{array} \arrow{r} & \begin{array}{c} A_0^{\theta-1,1} \oplus A_1^{\theta-1,1} \\ B_0^{\theta-1,1} \oplus B_1^{\theta-1,1} \end{array} \arrow{r} & A_0^{\theta,1} \oplus B_0^{\theta,1} \\
    A_0^{0,0} \oplus A_1^{0,0} \arrow{r} & A_0^{1,0} \oplus A_1^{1,0} \arrow{r} & \begin{array}{c} A_0^{2,0} \oplus A_1^{2,0} \\ B_0^{2,0} \oplus B_1^{2,0} \end{array} \arrow{r} & \ldots \arrow{r} & \begin{array}{c} A_0^{\theta-2,0} \oplus A_1^{\theta-2,0} \\ B_0^{\theta-2,0} \oplus B_1^{\theta-2,0} \end{array} \arrow{r} & \begin{array}{c} A_0^{\theta-1,0} \oplus A_1^{\theta-1,0} \\ B_0^{\theta-1,0} \oplus B_1^{\theta-1,0} \end{array} \arrow{r} & A_0^{\theta,0} \oplus B_0^{\theta,0}
\end{tikzcd}
\caption{The first page of the spectral sequence \eqref{spectral}.}\label{Figure1}
\end{figure}
\vfill

\newpage

\thispagestyle{empty}
\vfill
\begin{figure}
\centering
\begin{tikzcd}
\bigoplus_{\overline{z}\in Z} (\mathrm R^{\theta-1}j_{*}\overline{\mathbb Q_{\ell}})_{\overline{z}} \arrow[dashed]{rrrrddd} & & & & & & &\\
\vdots \arrow[dashed]{rrrdd} & & & & & & &\\
\bigoplus_{\overline{z}\in Z} (\mathrm R^1j_{*}\overline{\mathbb Q_{\ell}})_{\overline{z}} \arrow{rrd} & & & & & & &\\
\mathrm H^0(S_{\theta}) & \mathrm H^1(S_{\theta}) & \mathrm H^2(S_{\theta}) & \ldots & \mathrm H^{\theta}(S_{\theta}) & \mathrm H^{\theta+1}(S_{\theta}) & \ldots & \mathrm H^{2\theta}(S_{\theta})
\end{tikzcd}
\caption{The second page of the spectral sequence \eqref{spectralbis} (the differentials in dashed lines correspond to deeper pages).}\label{Figure2}
\end{figure}
\vfill
\end{landscape}

\end{document}